\documentclass[a4paper,11pt]{amsart}
\usepackage[english]{babel}
\usepackage[latin1]{inputenc}
\usepackage[T1]{fontenc}
\usepackage{amssymb,amsfonts,amsmath,amsthm}
\usepackage{paralist}
\usepackage[colorlinks=true, linkcolor=blue, citecolor=red, urlcolor=blue]{hyperref}
\usepackage[stretch=10,shrink=10,step=4,kerning=true,protrusion=true,final]{microtype}
\usepackage[arrow,matrix,cmtip]{xy} 

\newtheorem{theorem}{Theorem}
\newtheorem{proposition}[theorem]{Proposition}

\newtheorem{lemma}[theorem]{Lemma}

\newtheorem{claim}[theorem]{Claim}

\theoremstyle{definition}
\newtheorem{definition}[theorem]{Definition}

\newtheorem{criterion}[theorem]{Criterion}

\theoremstyle{remark}
\newtheorem{remark}[theorem]{Remark}
\newtheorem{remarks}[theorem]{Remarks}
\newtheorem{example}[theorem]{Example}

\newtheoremstyle{theoremwithref}{}{}{\itshape}{}{\bfseries}{.}{.5em}{#1 #2 #3}
\theoremstyle{theoremwithref}

\setcounter{tocdepth}{3}
\setcounter{secnumdepth}{3}

\newcommand{\ie}{i.e.\ }
\newcommand{\eg}{e.g.\ }
\newcommand{\resp}{resp.\ }

\newcommand{\PP}{\mathbf{P}}
\newcommand{\CC}{\mathbf{C}}

\newcommand{\RR}{\mathbf{R}}

\newcommand{\NN}{\mathbf{N}}

\newcommand{\GL}{\mathrm{GL}}
\newcommand{\SL}{\mathrm{SL}}
\newcommand{\PSL}{\mathrm{PSL}}

\newcommand{\OO}{\mathrm{O}}

\newcommand{\PSU}{\mathrm{PSU}}
\newcommand{\U}{\mathrm{U}}

\newcommand{\g}{\mathfrak{g}}

\newcommand{\aaa}{\mathfrak{a}}

\newcommand{\Hom}{\mathrm{Hom}}

\newcommand{\Ker}{\mathrm{Ker}}

\newcommand{\Ad}{\mathrm{Ad}}
\newcommand{\ad}{\mathrm{ad}}

\newcommand{\kl}{\kappa}

\newcommand{\F}{\mathcal{F}}

\newcommand*\overX{%
  \hspace*{0.2em}\vbox{%
    \hrule height 0.3pt%
    \kern0.15ex%
    \hbox{%
      \kern-0.2em%
      \ensuremath{X}%
      \kern-0.1em%
    }%
  }%
}

\newcommand*\overY{%
  \vbox{%
    \hrule height 0.3pt%
    \kern0.15ex%
    \hbox{%
      \kern-0.1em%
      \ensuremath{Y}%
      \kern-0.1em%
    }%
  }%
}

\newcommand*\overZ{%
  \vbox{%
    \hrule height 0.3pt%
    \kern0.15ex%
    \hbox{%
      \kern-0.2em%
      \ensuremath{Z}%
      \kern-0.1em%
    }%
  }%
}

\newcommand{\dist}{\mathrm{dist}}
\newcommand{\haus}{\mathrm{Hdist}}

\title[Tameness of Riemannian locally symmetric spaces]{Tameness of Riemannian locally symmetric spaces arising from Anosov representations}

\author[O.\ Guichard]{Olivier Guichard}
\address{Universit\'e de Strasbourg, IRMA, 7 rue  Descartes, 67000 Strasbourg, France}
\email{olivier.guichard@math.unistra.fr}

\author[F.\ Kassel]{Fanny Kassel}
\address{CNRS and Universit\'e Lille 1, Laboratoire Paul Painlev\'e, 59655 Villeneuve d'Ascq Cedex, France}
\email{fanny.kassel@math.univ-lille1.fr}

\author[A.\ Wienhard]{Anna Wienhard}
\address{Ruprecht-Karls Universit\"at Heidelberg, Mathematisches Institut, Im Neuenheimer Feld~288, 69120 Heidelberg, Germany
\newline HITS gGmbH, Heidelberg Institute for Theoretical Studies, Schloss-Wolfs\-brunnen\-weg 35, 69118 Heidelberg, Germany}
\email{wienhard@uni-heidelberg.de}
\thanks{FK was partially supported by the Agence Nationale de la Recherche under the grant DiscGroup (ANR-11-BS01-013) and through the Labex CEMPI (ANR-11-LABX-0007-01).
AW was partially supported by the National Science Foundation under agreement DMS-1536017, by the Sloan Foundation, by the Deutsche Forschungsgemeinschaft, and by the European Research Council under ERC-Consolidator grant 614733.
Part of this work was carried out while the authors were in residence at the MSRI in Berkeley, California, supported by the National Science Foundation under grant 0932078~000.
The authors also acknowledge support from U.S. National Science Foundation grants DMS 1107452, 1107263, 1107367 ``RNMS: GEometric structures And Representation varieties'' (the GEAR Network).}

\begin{document}

\numberwithin{theorem}{section}
\numberwithin{equation}{section}

\begin{abstract}
We construct compactifications of Riemannian locally symmetric spaces arising as quotients by Anosov representations.
These compactifications are modeled on generalized Satake compactifications and, in certain cases, on maximal Satake compactifications.
We deduce that these Riemannian locally symmetric spaces are topologically tame, \ie homeomorphic to the interior of a compact manifold with boundary.
We also construct domains of discontinuity (not necessarily with a compact quotient) in a much more general setting.
\end{abstract}
\maketitle

\section{Introduction}

Any discrete subgroup $\Lambda$ of a semisimple (or reductive) Lie group $G$ acts properly discontinuously by isometries on the Riemannian symmetric space $X = G/K$ of~$G$.
The quotient space $M= \Lambda\backslash X$ is a Riemannian locally symmetric orbifold, which is noncompact except if $\Lambda$ is a uniform lattice in~$G$.
When $M$ has finite volume (\ie $\Lambda$ is a lattice), compactifications of~$M$ have been well studied: see \cite{Borel_Ji} for an overview of various compactifications with their properties and uses. 
When $M$ has infinite volume, compactifications of~$M$ have been mainly studied in the case that $G$ has real rank one, \ie that $X$ is a negatively curved manifold.
In this case, compactifications of~$M$ have been constructed for geometrically finite representations (see \cite[Prop.\,3.5]{Ji_NovikovConj}, based on \cite[Th.\,6.5]{Apanasov_Xie}). 
Recently there has been a growing interest in Zariski-dense subgroups of semisimple Lie groups, also of higher rank, which are not lattices, \ie for which $M$ has infinite volume.
However, when $G$ has higher real rank and $\Lambda$ has infinite covolume, compactifications of~$M$ are not well studied, and very little is known. 

In this paper we construct compactifications of~$M$ when $\Lambda$ is the image of an Anosov representation.
When $G$ has real rank one, the images of Anosov representations are exactly the convex cocompact subgroups; when $G$ has higher real rank, images of Anosov representations provide a meaningful generalization of convex cocompact subgroups \cite{Labourie_anosov, Guichard_Wienhard_DoD, KapovichLeebPorti, KapovichLeebPorti14, KapovichLeebPorti14_2}.

\begin{theorem} \label{thm:main}
Let $X=G/K$ be a Riemannian symmetric space, where $G$ is a noncompact real semisimple Lie group and $K$ a maximal compact subgroup of~$G$.
Let $\Gamma$ be a word hyperbolic group and $P$ a proper parabolic subgroup of~$G$.
For any $P$-Anosov representation $\rho\in\Hom(\Gamma,G)$, the Riemannian locally symmetric space $\rho(\Gamma)\backslash X$ admits a compactification which is an orbifold with corners, locally modeled on a generalized Satake compactification of~$X$.
\end{theorem}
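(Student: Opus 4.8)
The plan is to realize $M = \rho(\Gamma)\backslash X$ as an open dense suborbifold of the quotient of a cocompact domain of discontinuity for $\rho(\Gamma)$ inside a generalized Satake compactification $\overline{X}^S$ of~$X$, and then to analyze the local structure of that domain near the ideal boundary. First I would fix the compactification: let $\theta$ be a subset of restricted simple roots determined by~$P$ (the type of~$P$, symmetrized by the opposition involution if necessary), and let $\overline{X}^S = \overline{X}^S_\theta$ be the corresponding generalized Satake compactification, a compact $G$-space having $X = G/K$ as its open dense $G$-orbit. Recall that $\partial\overline{X}^S_\theta := \overline{X}^S_\theta\setminus X$ is a finite union of $G$-orbits indexed by the subsets $\theta'\subseteq\theta$, that the orbit attached to $\theta'$ is a $G$-equivariant fibre bundle over a real flag variety $G/Q_{\theta'}$ (with $Q_{\theta'}$ a parabolic whose type is governed by~$\theta'$) with fibre the Riemannian symmetric space $X_{\theta'}$ of a reductive subquotient of~$G$, and that relative positions between such a boundary point and a point of $G/P$ are controlled, via these fibrations, by Bruhat-order combinatorics.

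Next I would build the domain of discontinuity. The $P$-Anosov representation $\rho$ has discrete image and comes with continuous, $\rho$-equivariant, transverse boundary maps $\xi^+\colon\partial_\infty\Gamma\to G/P$ and $\xi^-\colon\partial_\infty\Gamma\to G/P^-$ enjoying a contraction property. Using $\xi^-$ I would define a closed, $\rho(\Gamma)$-invariant subset $\mathcal{K}_\rho\subseteq\partial\overline{X}^S_\theta$ by collecting, in each boundary stratum, the points whose image in the flag-variety base is in ``bad position'' with respect to some $\xi^-(\eta)$, $\eta\in\partial_\infty\Gamma$; here ``bad position'' is prescribed by a balanced (cocompact) ideal in the relevant Bruhat order, chosen uniformly across all strata $\theta'\subseteq\theta$ so that the pieces are compatible under the fibrations. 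Put $\Omega_\rho := \overline{X}^S_\theta\setminus\mathcal{K}_\rho$; then $X\subseteq\Omega_\rho$ because $\mathcal{K}_\rho$ lies in the boundary, and $\Omega_\rho$ is open and $\rho(\Gamma)$-invariant. The key input is that $\rho(\Gamma)$ acts properly discontinuously and cocompactly on~$\Omega_\rho$: this is the analogue, for the $G$-space $\overline{X}^S_\theta$ rather than a single flag variety, of the cocompact domain of discontinuity theorems of Guichard--Wienhard and Kapovich--Leeb--Porti, and it reduces to them stratum by stratum together with a compatibility argument along the fibrations. Granting this, $\overline{M} := \rho(\Gamma)\backslash\Omega_\rho$ is a compact orbifold (charts being quotients of open subsets of $\Omega_\rho$ by finite stabilizers), and since $X$ is open, dense and $\rho(\Gamma)$-invariant in $\Omega_\rho$, the space $M = \rho(\Gamma)\backslash X$ embeds in $\overline{M}$ as an open dense suborbifold; thus $\overline{M}$ is a compactification of~$M$.

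It remains to show that $\Omega_\rho$ is a smooth manifold with corners on which $\rho(\Gamma)$ acts by corner-preserving diffeomorphisms: this makes $\overline{M}$ an orbifold with corners, and by construction its orbifold charts are open subsets of $\overline{X}^S_\theta$, i.e.\ it is locally modeled on the generalized Satake compactification. Near an interior point there is nothing to prove. Near a boundary point $z\in\Omega_\rho$ lying in the stratum indexed by~$\theta'$, I would use the partial Cartan (horospherical) decomposition adapted to~$\theta'$ to produce an explicit chart identifying a neighbourhood of~$z$ in $\overline{X}^S_\theta$ with a product $N_{\theta'}\times X_{\theta'}\times\overline{C}$, where $N_{\theta'}$ is a unipotent factor, $X_{\theta'}$ the boundary component, and $\overline{C}$ the closure inside $\overline{X}^S_\theta$ of an open cone in a flat; because $z$ is transverse to every $\xi^-(\eta)$, this chart can be taken inside $\Omega_\rho$, and the transversality condition (equivalently, the balanced-ideal restriction on which strata are mutually adjacent within $\Omega_\rho$) forces $\overline{C}$ to be diffeomorphic in these coordinates to an orthant $[0,\infty)^k\times\RR^{\ell}$ rather than to a more singular cone. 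Hence $\Omega_\rho$ is a manifold with corners, the $G$-action is smooth and preserves the corner stratification (the Satake boundary strata), and $\overline{M}$ is an orbifold with corners, as claimed; in the cases where $\theta$ may be taken to be the full set of restricted simple roots one obtains the maximal Satake compactification as the local model.

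The main obstacle is twofold. First, one must run the domain of discontinuity construction not in a flag variety but in the $G$-space $\overline{X}^S_\theta$, whose boundary strata are glued along distinct flag-variety fibrations; proving proper discontinuity and, above all, cocompactness demands a uniform choice of combinatorial data across all strata together with a gluing argument, and this is where most of the technical work lies. Second, the passage from ``compact orbifold'' to ``orbifold with corners'' rests on the fine local geometry of Satake compactifications near their boundary: one must exhibit genuine corner charts and verify, using transversality of the Anosov boundary maps, that no non-corner singularities of $\overline{X}^S_\theta$ survive in~$\Omega_\rho$. I expect the cocompactness of the domain of discontinuity to be the principal hurdle.
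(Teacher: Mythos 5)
Your overall strategy (remove a ``bad set'' from a compactification of $X$ and show the complement is a cocompact domain of discontinuity) is the right one, and it is closer in spirit to the Kapovich--Leeb approach than to the one used here; but as written it has two genuine gaps. The first is that the entire analytic core of the theorem --- proper discontinuity and, above all, cocompactness of the action on $\Omega_\rho$ --- is asserted to ``reduce to'' the flag-variety domain-of-discontinuity theorems ``stratum by stratum together with a compatibility argument along the fibrations.'' This does not work stratum by stratum: cocompactness of the action on each boundary stratum separately does not imply cocompactness on their union, since an escaping sequence can drift between strata. One needs a single global argument on the whole compact space; the paper gets this by reducing (via a $\theta$-compatible representation $\tau:G\to\OO(b)$) to the concrete model $\overX_b\subset\mathrm{Gr}_q(V)$, where the bad set is an incidence condition ($\xi(\eta)\subset W$), properness is a direct Cartan-decomposition computation (Lemma~\ref{lem:propernessPk-seq}), and cocompactness follows from the Kapovich--Leeb--Porti expansion criterion (Lemma~\ref{lem:compactness-metric}) applied to all of $\mathrm{Gr}_q(V)$ at once via Proposition~\ref{prop:P1-expanding}. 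Relatedly, your ``balanced ideal chosen uniformly across all strata so that the pieces are compatible under the fibrations'' is exactly the data whose existence needs to be established; you never produce it, and constructing compatible thickenings across the distinct flag fibrations of the Satake boundary is a nontrivial combinatorial problem that the Grassmannian model sidesteps entirely.

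The second gap concerns the corner structure. You take the local model to be $\overline{X}^S_\theta$ itself and argue that transversality forces the closed cone $\overline{C}$ in your chart to be an orthant. But a generalized Satake compactification of arbitrary support (e.g.\ a minimal one) is in general \emph{not} a manifold with corners near its boundary --- the failure is an intrinsic feature of the closure of the flat, not something that removing a bad set or invoking transversality of the boundary maps can repair. The paper handles this by forcing the local model to \emph{dominate the maximal} Satake compactification: it replaces $\overline{Y}\subset\overX_b$ by the closure $\overZ$ of the diagonal orbit in $\overX_b\times\overX^{sba}$, which is again a generalized Satake compactification (Lemma~\ref{lem:gener-satake-comp-product}) and is a manifold with corners precisely because it dominates $\overX^{sba}$ (Lemma~\ref{lem:gal-sata-corners}, using that the comparison map to the maximal compactification is a local diffeomorphism on the relevant charts). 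Without some such domination step, your claim that $\overline{C}\cong[0,\infty)^k\times\RR^\ell$ is unjustified and in general false.
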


We introduce generalized Satake compactifications in Appendix~\ref{sec:satake-comp} (Definition~\ref{defi:gen-Satake}).
They provide a natural extension of the class of Satake compactifications, which satisfies the functorial property that the closure of a totally geodesic subsymmetric space $Y\subset X$ in a generalized Satake compactification of~$X$ is a generalized Satake compactification of~$Y$.
This is not true for Satake compactifications.
In Theorem~\ref{thm:main} the generalized Satake compactification \emph{dominates} (\ie admits a continuous $G$-equivariant map to) the maximal Satake compactification of~$X$.

For specific Anosov representations, we can improve Theorem~\ref{thm:main} and construct a compactification modeled on the maximal Satake compactification of~$X$.

\begin{theorem} \label{thm:main_max_Satake}
Let $X=G/K$ be a Riemannian symmetric space where $G$ is a noncompact real semisimple Lie group.
Then there exists a maximal proper parabolic subgroup $P$ of~$G$ such that for any word hyperbolic group~$\Gamma$ and any $P$-Anosov representation $\rho : \Gamma\to G$, the Riemannian locally symmetric space $\rho(\Gamma)\backslash X$ admits a compactification which is an orbifold with corners, locally modeled on the \emph{maximal Satake compactification} of~$X$.
\end{theorem}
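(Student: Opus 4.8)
The plan is to derive the statement from Theorem~\ref{thm:main} by choosing the parabolic $P$ so that the generalized Satake compactification produced by the construction is \emph{exactly} the maximal Satake compactification of~$X$, and not a strictly larger one. We may assume that $G$ is simple (the general case reduces to this, using that the maximal Satake compactification of a product of symmetric spaces is the product of the maximal Satake compactifications of the factors).

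Recall how the compactification in Theorem~\ref{thm:main} is built. Given a $P$-Anosov representation $\rho\colon\Gamma\to G$ with boundary map $\xi\colon\partial_\infty\Gamma\to G/P$, the construction attaches to~$P$ a generalized Satake compactification $\overline X_P$ of~$X$ (the one dominating $\overline X^{\max}_{\mathrm{Sat}}$) and, inside its boundary, a closed $\rho(\Gamma)$-invariant limit set $\Lambda$ assembled from the sets of boundary points that are non-transverse to some $\xi(\eta)$; one then shows that $\rho(\Gamma)$ acts properly discontinuously and cocompactly on the open set $\Omega:=\overline X_P\setminus\Lambda\supseteq X$, so that $\rho(\Gamma)\backslash\Omega$ is a compactification of $\rho(\Gamma)\backslash X$ which is an orbifold with corners locally modeled on~$\overline X_P$. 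Thus it suffices to exhibit a \emph{maximal} proper parabolic $P$ with $\overline X_P=\overline X^{\max}_{\mathrm{Sat}}$.

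To produce such a $P$, I would fix an irreducible representation $\tau\colon G\to\mathrm{PGL}(V)$ whose highest restricted weight $\chi$ is regular dominant, so that the ordinary Satake compactification attached to $\tau$ is $\overline X^{\max}_{\mathrm{Sat}}$, and then take $P=P_{\alpha_0}$ to be the maximal parabolic attached to a simple restricted root $\alpha_0$ chosen as below. The point is that, although the ordinary Satake compactification attached to a single fundamental representation is small, the generalized Satake compactification $\overline X_{P_{\alpha_0}}$ entering the construction for $P_{\alpha_0}$-Anosov representations can be identified --- using Definition~\ref{defi:gen-Satake} and the analysis of the domination order in Appendix~\ref{sec:satake-comp} --- with $\overline X^{\max}_{\mathrm{Sat}}$ itself for a suitable $\alpha_0$. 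Concretely: if $\rho$ is $P_{\alpha_0}$-Anosov then $\tau\circ\rho$ is Anosov with respect to the stabilizer in $\mathrm{PGL}(V)$ of the filtration of~$V$ by $\alpha_0$-height, this stabilizer meets $\tau(G)$ exactly in $\tau(P_{\alpha_0})$, and one runs the construction of Theorem~\ref{thm:main} inside the closure of $X$ in $\PP(\mathrm{Herm}(V))$, i.e.\ inside $\overline X^{\max}_{\mathrm{Sat}}$.

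The hard part is the choice of $\alpha_0$ and the accompanying combinatorial verification. One needs $\alpha_0$ such that, simultaneously for every boundary $G$-orbit of $\overline X^{\max}_{\mathrm{Sat}}$ (a flag manifold $G/Q$, fibering over a smaller Satake compactification), the non-transversality relation between $G/P_{\alpha_0}$ and $G/Q$ admits a ``balanced'' thickening --- so that the complement $\Omega$ of the resulting limit set is at once large enough for cocompactness and has complement large enough for proper discontinuity --- and such that these thickenings are compatible along the corner strata. This is a finite, type-by-type problem, genuinely sensitive to $\alpha_0$ (a uniform choice need not work for every maximal parabolic, which is why the statement only claims existence). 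Once a good $\alpha_0$ is fixed, the argument concludes exactly as in Theorem~\ref{thm:main}: $\rho(\Gamma)$ acts properly discontinuously and cocompactly on the open neighborhood $\Omega$ of~$X$ in $\overline X^{\max}_{\mathrm{Sat}}$, and $\rho(\Gamma)\backslash\Omega$ is a compactification of $\rho(\Gamma)\backslash X$ which is an orbifold with corners, locally modeled on the maximal Satake compactification.
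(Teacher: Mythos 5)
Your high-level strategy (reduce to $G$ simple, pick one specific maximal parabolic, and arrange for the construction of Theorem~\ref{thm:main} to land inside the maximal Satake compactification rather than a strictly larger generalized one) is the right one, but the two steps that actually make it work are missing, and the mechanism you propose for the first of them cannot work as stated. You want to take $\tau$ irreducible with \emph{regular} highest weight so that its Satake compactification is $\overX^{max}$, and simultaneously take $P=P_{\{\alpha_0\}}$ a maximal parabolic. But the transfer of the Anosov property through $\tau$ (Proposition~\ref{prop:theta-comp-Anosov}) requires $\tau$ to be $\theta$-compatible, i.e.\ the support of $\chi_\tau$ must equal $\{\alpha_0,\alpha_0^\star\}$ --- which forces $\tau$ to be non-regular, and then the ordinary Satake compactification attached to $\tau$ is far from maximal. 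This is exactly the tension the paper has to resolve, and it does so by a fact you never invoke: taking $\tau=\Ad$ (which is $\{\alpha_G,\alpha_G^\star\}$-compatible for the root $\alpha_G$ of Table~\ref{table3}, so that $P_{\{\alpha_G\}}$-Anosov transfers to $P_d(\kl)$-Anosov by Proposition~\ref{prop:G_makeP1}), the closure of $X=\Ad(G)\cdot\mathfrak{k}$ inside the Grassmannian model $\overX_{\kl}$ --- the \emph{subalgebra compactification} $\overX^{sba}$ --- is nevertheless isomorphic to the \emph{maximal} Satake compactification, by the theorem of Ji--Lu (Proposition~\ref{prop:Satake_subalgebra}). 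Without this (or some substitute), your claim that ``$\overline X_{P_{\alpha_0}}$ can be identified with $\overline X^{\max}_{\mathrm{Sat}}$ for a suitable $\alpha_0$'' is precisely the unproved content of the theorem.

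The second gap is that you defer the heart of the argument to a ``finite, type-by-type'' verification of balanced thickenings compatible along corner strata. That is the Kapovich--Leeb--Porti machinery, which this paper deliberately avoids (and does not develop); nothing in the paper would let you carry out that verification. What replaces it here is concrete linear algebra in the quadric model: Proposition~\ref{prop:propernessPk} and Lemma~\ref{lem:cocompactness} give proper discontinuity and cocompactness on $\overX_{\kl}$ minus two a priori different bad sets $\mathcal{W}_{\Ad\circ\rho}$ and $\mathcal{V}_{\Ad\circ\rho}$, and the key point (Lemma~\ref{lem:inter-impl-contained}, proved via the Bruhat decomposition and the description of $\Ker(\kl|_{\mathfrak{r}_\theta\times\mathfrak{r}_\theta})=\mathfrak{u}_\theta$ in Lemma~\ref{lem:ker_Xsa}) is that on $\overX^{sba}$ these two bad sets coincide and avoid $X$: for $W\in\overX^{sba}$ and $L\in\Ad(G)\cdot\g_{\chi_G}$, $L\cap W\neq\{0\}$ already forces $L\subset W$. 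That single lemma is what your ``balanced thickening'' hand-wave would have to be replaced by, and it is specific to the adjoint representation and the highest root space $\g_{\chi_G}$ --- so the choice of $\alpha_0$ is not an unspecified existence claim but the explicit root $\alpha_G$ attached to the extended Dynkin diagram. As written, your sketch asserts the conclusion of Theorem~\ref{thm:max_Satake} without supplying either of the two ingredients that prove it.
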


For more precise statements, we refer to Theorem~\ref{thm:max_Satake} in the case that $G$ is simple and Theorem~\ref{thm:max-satak-semisimple} in the general case.

\begin{remarks}
\begin{enumerate}
  \item If $P'$ is a parabolic subgroup of~$G$ contained in~$P$, then any $P'$-Anosov representation $\rho: \Gamma \to G$ is $P$-Anosov.
  In particular, Theorem~\ref{thm:main_max_Satake} applies to any $P'$-Anosov representation with $P'\subset P$ (for instance to any $P_{\mathrm{min}}$-Anosov representation where $P_{\mathrm{min}}$ is a minimal parabolic subgroup of~$G$).
  \item For Anosov representations $\rho:\Gamma \to \OO(b)$ (\resp \(\OO(b_\CC)\)) into the orthogonal of a nondegenerate real (\resp complex) symmetric bilinear form \(b\) (\resp \(b_\CC\)), we actually construct compactifications of the Riemannian locally symmetric spaces that are modeled on a minimal Satake compactification: see Theorems \ref{thm:comp_loc_sym_spc_opq} and~\ref{thm:comp_loc_sym_spc_onC} for precise statements.
  \item In the preprint \cite{KapovichLeeb15}, Kapovich and Leeb construct, by a different method, compactifications modeled on the maximal Satake compactification for Riemannian locally symmetric spaces arising from any Anosov representation.
They also prove a converse statement: if a subgroup \(\Lambda\) of \(G\) is uniformly \(\tau_{mod}\)-regular (a uniform version of the notion of \(P_\theta\)-divergence to be found below in Section~\ref{sec:diverg-repr}) and if the locally symmetric space \(\Lambda \backslash X\) admits a compactification modeled on the maximal Satake compactification of~$X$, then the group \(\Lambda\) is word hyperbolic and the inclusion of \(\Lambda\) in \(G\) is \(P_\theta\)-Anosov.
\end{enumerate}
\end{remarks}

The compactifications of $M = \Lambda \backslash X$ that we construct in Theorems \ref{thm:main} and \ref{thm:main_max_Satake} and their refinements (Theorems \ref{thm:comp_loc_sym_spc_opq}, \ref{thm:max_Satake}, \ref{thm:max-satak-semisimple}, and~\ref{thm:comp_loc_sym_spc_onC}) are all obtained by considering a Satake or generalized Satake compactification $\overX$ of~$X$, and removing from it a {\em bad set}~$\mathcal{N}$, determined by the dynamical properties of sequences of elements of~$\Lambda$, such that the action of $\Lambda$ on $\overX \smallsetminus \mathcal{N}$ is properly discontinuous.
The idea of describing $\mathcal{N}$ in terms of dynamics of sequences is inspired by \cite{Frances_Lorentzian}.

In Theorem~\ref{thm:bdf} we define a bad set~$\mathcal{N}$ in a compactification~$\overX$ and obtain a properly discontinuous action on $\overX \smallsetminus \mathcal{N}$ for \emph{any} discrete subgroup $\Lambda$ of~$G$.
This yields a manifold with corners containing \(\Lambda \backslash X\) as a dense subset.

For Anosov representations, the compactification~$\overX$ and the bad set~$\mathcal{N}$ can be chosen in such a way that the quotient $\Lambda\backslash (\overX\smallsetminus\mathcal{N})$ is compact, providing a genuine compactification of $M = \Lambda\backslash X$.
Let us emphasize that the topology on $\overX \smallsetminus \mathcal{N}$ is induced by the inclusion into \(\overX\).
This is in contrast to the situation of Satake compactifications of  Riemannian locally symmetric spaces of finite volume, where one takes the union of $X$ with a subset of~$\overX \smallsetminus X$, but changes the topology on the union. 
A combination of these two strategies might provide an approach to compactify Riemannian locally symmetric spaces of infinite volume that do not arise from Anosov representations, but from more general discrete subgroups.

We apply our construction of compactifications to prove topogical tameness.

\begin{theorem} \label{thm:tame}
Let $X=G/K$ be a Riemannian symmetric space, where $G$ is a noncompact real semisimple Lie group and $K$ a maximal compact subgroup of~$G$.
Let $\Gamma$ be a torsion-free word hyperbolic group and $P$ a proper parabolic subgroup of~$G$.
For any $P$-Anosov representation $\rho\in\Hom(\Gamma,G)$, the Riemannian locally symmetric space $\rho(\Gamma)\backslash G/K$ is topologically tame, \ie homeomorphic to the interior of a compact manifold with boundary.
\end{theorem}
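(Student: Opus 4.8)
The plan is to deduce Theorem~\ref{thm:tame} directly from the compactifications produced in Theorems~\ref{thm:main} and~\ref{thm:bdf}, using torsion-freeness to promote the ``orbifold with corners'' to a genuine \emph{manifold} with corners, and then the classical fact that a compact manifold with corners is homeomorphic to a compact manifold with boundary.

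First I would record that $\rho$ is injective with torsion-free image: a $P$-Anosov representation has finite kernel, and a finite subgroup of the torsion-free group $\Gamma$ is trivial, so $\rho$ is injective and $\Lambda:=\rho(\Gamma)\cong\Gamma$ is a torsion-free discrete subgroup of~$G$. In particular $\Lambda$ acts freely on $X=G/K$, since an element of $\Lambda$ fixing a point of $X$ is conjugate into the compact group~$K$ and therefore generates a finite --- thus trivial --- subgroup; so $M=\Lambda\backslash X$ is a manifold.

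Next I would examine the compactification $\overX\smallsetminus\mathcal{N}$ underlying Theorem~\ref{thm:main}. By the construction behind Theorem~\ref{thm:bdf} it is a manifold with corners on which $\Lambda$ acts properly discontinuously, and by Theorem~\ref{thm:main} (for $\overX$ and $\mathcal{N}$ chosen adapted to the Anosov representation) the quotient $\Lambda\backslash(\overX\smallsetminus\mathcal{N})$ is compact. Proper discontinuity forces every point-stabilizer to be finite, hence trivial because $\Lambda$ is torsion-free; so the $\Lambda$-action on $\overX\smallsetminus\mathcal{N}$ is free, and $\Lambda\backslash(\overX\smallsetminus\mathcal{N})$ is a \emph{compact manifold with corners}. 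Its interior, as a manifold with corners, is exactly~$M$: the symmetric space $X$ is open and dense in every generalized Satake compactification~$\overX$ and is locally Euclidean there (so it is the interior of~$\overX$), the bad set~$\mathcal{N}$ is disjoint from~$X$, and passing to a free properly discontinuous quotient preserves the corner stratification.

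Finally I would invoke the classical fact that any compact manifold with corners is homeomorphic --- indeed, after smoothing, diffeomorphic --- to a compact manifold with boundary, via a homeomorphism restricting to a homeomorphism of the interiors; this is the standard ``straightening of the corners'', built from the local homeomorphisms $[0,\infty)^k\times\RR^{n-k}\cong[0,\infty)\times\RR^{n-1}$ patched over the faces. Applied to $\Lambda\backslash(\overX\smallsetminus\mathcal{N})$, this exhibits $M$ as the interior of a compact manifold with boundary, \ie proves that $M$ is topologically tame. The whole substance of the argument lies in Theorems~\ref{thm:main} and~\ref{thm:bdf}; here the only genuine point is the freeness of the action --- the sole place where torsion-freeness is used --- while the corner-straightening step is routine, so I do not anticipate any real obstacle in this deduction.
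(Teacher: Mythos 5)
Your deduction is correct, but it follows a genuinely different route from the paper's. The paper proves Theorem~\ref{thm:tame} in one line by invoking Proposition~\ref{prop:semialgebraic} from \cite{GGKW_compact}: every set in play (the generalized Satake compactification, the bad set, the orbit $X$) is real semi-algebraic and the action is by algebraic homeomorphisms, so the compact quotient admits a triangulation in which the image of the boundary is a subcomplex, and tameness follows from that proposition. You instead exploit the manifold-with-corners structure directly: torsion-freeness makes the action free (finite kernel plus torsion-freeness gives injectivity, and proper discontinuity plus torsion-freeness kills point stabilizers), so the compact quotient is a genuine manifold with corners with interior $M$, and topologically a manifold with corners is already a manifold with boundary since the local models $[0,\infty)^k\times\RR^{n-k}$ and $[0,\infty)\times\RR^{n-1}$ are homeomorphic (no chart compatibility being required for a topological manifold). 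Two remarks. First, the manifold-with-corners structure you need is supplied by Lemma~\ref{lem:gal-sata-corners} --- the compactification $\overZ$ built in the proof of Theorem~\ref{thm:main} dominates the maximal Satake compactification --- rather than by Theorem~\ref{thm:bdf}, which concerns $\overX^{sba}$ and arbitrary discrete subgroups; and one should note, as the proof of that lemma implicitly shows, that the codimension-zero stratum of $\overZ$ is exactly $X$, so that the interior of the quotient is exactly $M$. Second, what the paper's semi-algebraic route buys is robustness: it applies verbatim to compactifications modeled on minimal or generalized Satake compactifications that need not be manifolds with corners (as in Theorems~\ref{thm:comp_loc_sym_spc_opq} and~\ref{thm:comp_loc_sym_spc_onC}), and it yields a triangulation with the boundary as a subcomplex, whereas your argument is tied to the maximal-Satake-dominating case. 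Within the hypotheses of Theorem~\ref{thm:tame}, both arguments are sound.
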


\subsection*{Organization of the paper}
\label{sec:organization-paper}

In Section~\ref{sec:lie-groups} we introduce some notation and recall some basic facts on semisimple Lie groups and their parabolic subgroups.
In Section~\ref{sec:anos-repr} we recall the notions of limit set and Anosov representation, and establish some useful properties.
In Section~\ref{sec:comp-anos-locally} we prove Theorem~\ref{thm:comp_loc_sym_spc_opq}, which gives compactifications modeled on minimal Satake compactifications for orthogonal groups. 
From this, in Section~\ref{sec:comp-riem-locally-gal}, we deduce Theorems~\ref{thm:main} and~\ref{thm:main_max_Satake} in full generality as well as Theorem~\ref{thm:tame}; compactifications for complex orthogonal groups (Theorem~\ref{thm:comp_loc_sym_spc_onC}) are also discussed.
In Appendix~\ref{sec:satake-comp} we give a description of Satake compactifications and a few properties of generalized Satake compactifications.

\subsection*{Acknowledgements}

We are grateful to Lizhen Ji for his interest in this work and for motivating discussions about it.
We thank Misha Kapovich and Bernhard Leeb for pointing out a mistake in a previous version of the paper.

\section{Background on Lie groups and their parabolic subgroups}
\label{sec:lie-groups}

In this section we recall some basic facts on the structure of real reductive Lie groups and their parabolic subgroups. 

Let $G$ be a real reductive Lie group with Lie algebra~$\g$.
In the whole paper, we assume $G$ to be noncompact, equal to a finite union of connected components (for the real topology) of $\mathbf{G}(\RR)$ for some algebraic group~$\mathbf{G}$.

\subsection{Restricted roots}
\label{sec:gener-reduct-lie}

Let $K$ be a maximal compact subgroup of~$G$, with Lie algebra~$\mathfrak{k}$, and let 
\(\aaa\) be a maximal abelian subspace of the orthogonal complement of $\mathfrak{k}$ in~$\g$ for the Killing form \(\kl\).
The \emph{real rank} of~$G$ is by definition the dimension of~$\aaa$.
Let $\Sigma$ be the set of restricted roots of $\aaa$ in~$\g$, \ie the set of nonzero linear forms $\alpha\in\aaa^*$ for which 
\[ \g_{\alpha} := \{ z\in\g ~|~ \ad(a)(z) = \langle{\alpha, a}\rangle\,z \quad\forall a\in\aaa\} \]
is nonzero.
(We denote by $\langle\cdot,\cdot\rangle : \aaa^*\times\aaa\to\RR$ the natural pairing.)
Let $\Delta\subset\Sigma$ be a system of \emph{simple restricted roots}, \ie any element of~$\Sigma$ is expressed uniquely as a linear combination of elements of \(\Delta\) with coefficients all of the same sign.
Let 
\[ \overline{\aaa}^+ := \{Y \in \aaa \mid \langle{\alpha,Y}\rangle \geq 0\quad \forall \alpha \in \Delta\} \]
be the closed positive Weyl chamber of~$\aaa$ associated with~$\Delta$.
The \emph{restricted Weyl group} of $\aaa$ in~$\g$ is the group $W=N_K(\aaa)/Z_K(\aaa)$, where $N_K(\aaa)$ (\resp $Z_K(\aaa)$) is the normalizer (\resp centralizer) of $\aaa$ in~$K$.
There is a unique element $w_0 \in W$ such that $w_0\cdot (-\overline{\aaa}^+)=\overline{\aaa}^+$; the involution of~$\aaa$ defined by $Y\mapsto -w_0\cdot Y$ is called the \emph{opposition involution}.
The corresponding dual linear map preserves~$\Delta$; we shall denote it by
\begin{align}\label{eqn:opp-inv}
  \aaa^{\ast} & \longrightarrow  \aaa^{\ast}\\
  \alpha\, & \longmapsto  \alpha^{\star} = -w_0\cdot \alpha. \notag
\end{align}

\subsection{Cartan decomposition}
\label{sec:cartan-decomposition}

Recall that $G$ admits the \emph{Cartan decomposition} $G=K \exp(\overline{\aaa}^+)K$: any $g\in G$ may be written
\begin{equation} \label{eqn:g=kak'}
g = k_g a_g \ell_g
\end{equation}
for some $k_g,\ell_g\in K$ and a unique \(a_g\) in \(\exp (\overline{\aaa}^+)\), $\mu(g)=\log a_g$ is called the Cartan projection of \(g\) (see \cite[Ch.\,IX, Th.\,1.1]{Helgason}).
This defines a proper, continuous, surjective map
\[ \mu :\ G \longrightarrow  \overline{\aaa}^+ \]
called the \emph{Cartan projection}, inducing a homeomorphism $K \backslash G / K \simeq \overline{\aaa}^+$.
The pair \((k_g,\ell_g)\) is not unique, but is determined uniquely up to the action of the centralizer of \(\mu(g)\) in~\(K\).

\subsection{Parabolic subgroups, flag varieties and transversality}
\label{sec:parabolic-subgroups}

Let $\Sigma^+ \subset \Sigma$ be the set of \emph{positive restricted roots} with respect to~$\Delta$, \ie restricted roots that are nonnegative linear combinations of elements of~\(\Delta\).
For any nonempty subset $\theta\subset\Delta$, we denote by $P_\theta$ the normalizer in~$G$ of the Lie algebra $\mathfrak{u}_\theta = \bigoplus_{\alpha \in \Sigma^+ \smallsetminus  \mathrm{span}(\Delta \smallsetminus \theta)} \g_\alpha$. 
Explicitly,
\[\mathrm{Lie}(P_{\theta})= \mathfrak{p}_\theta = \g_0 \oplus \bigoplus_{\alpha\in\Sigma^+} \g_{\alpha} \oplus \bigoplus_{\alpha\in\Sigma^+\cap\mathrm{span}(\Delta\smallsetminus\theta)} \g_{-\alpha}.\]
In particular, $P_\emptyset = G$ and $P_\Delta$ is a minimal parabolic subgroup of~$G$.\footnote{This is the same convention as in \cite{GGKW_anosov, GGKW_compact}, but the opposite convention to \cite{Guichard_Wienhard_DoD}.}
Any parabolic subgroup of~$G$ is conjugate to~$P_{\theta}$ for some $\theta\subset\Delta$.

The standard opposite parabolic subgroup to $P_{\theta}$ is the normalizer \(P_{\theta}^{-}\) of \( \mathfrak{u}_{\theta}^{-} = \bigoplus_{\alpha \in \Sigma^+ \smallsetminus  \mathrm{span}(\Delta \smallsetminus \theta)} \g_{-\alpha}.\)
Note that \(P_{\theta}^{-}\) is conjugate to~\(P_{\theta^\star}\).
We shall consider the flag varieties
\begin{align*}
  \mathcal{F}_\theta &= \{ P \subset G \mid P \text{ is conjugate to }
  P_\theta\} \simeq G/P_\theta,\\
  \mathcal{F}_{\theta^\star} &= \{ P \subset G \mid P \text{ is conjugate to }
  P_{\theta}^{-}\} \simeq G/P_{\theta}^{-} \simeq G/P_{\theta^\star}.
\end{align*}
  
\begin{definition}\label{defi:para_transverse}
  A pair \((P,Q) \in \mathcal{F}_\theta \times \mathcal{F}_{\theta^\star}\) of parabolic subgroups of~$G$ is called \emph{transverse} if \(P \cap Q\) is a reductive Lie group, or equivalently if \((P,Q)\) is conjugate to \((P_\theta, P_{\theta}^{-})\) under the diagonal action of~$G$.
\end{definition}

\subsection{Example: general linear groups}
\label{sec:gener-line-groups}

Let \(G=\GL_\RR(V)\), where \(V\) is a real vector space of dimension~\(n\).
We may fix a basis \((e_1, \dots, e_n)\) of~\(V\) and take \(K\) to be \(\OO(n)\) and $\aaa$ to be the space of diagonal matrices in that basis:
\begin{equation*}
  \aaa = \{ \mathrm{diag}( \lambda_1, \dots, \lambda_n) \mid
  \lambda_1, \dots, \lambda_n \in \RR\}.
\end{equation*}
Let \((\varepsilon_1, \dots, \varepsilon_n)\) be the standard basis of~\(\aaa^*\), \ie \(\langle{\varepsilon_i, \mathrm{diag}(
\lambda_1, \dots, \lambda_n)}\rangle = \lambda_i\) for all~$i$.
The root system is
\[ \Sigma = \{ \varepsilon_i - \varepsilon_j \mid i\neq j,\ 1\leq i,j\leq n\}. \]
A system of simple roots is
\[ \Delta = \{ \alpha_i \mid 1\leq i\leq n-1\}, \]
where $\alpha_i := \varepsilon_i - \varepsilon_{i+1}$.
The opposition involution switches \(\alpha_i\) and~\(\alpha_{n-i}\).

The parabolic subgroup \(P_{\{\alpha_i\}}\) will be denoted~\(P_i\); it is the stabilizer in \(\GL_\RR(V)\) of the subspace \(\RR e_1 \oplus \cdots \oplus \RR e_i\) of~$V$.
The flag variety \(\mathcal{F}_{\{\alpha_i\}} = \GL_\RR(V)/P_i\) identifies with the Grassmannian \(\mathrm{Gr}_i(V) \simeq \mathrm{Gr}_{n-i}(V^{\ast})\).
In particular, \(\mathcal{F}_{\{\alpha_1\}}\) identifies with the projective space \(\PP(V)\) and \(\mathcal{F}_{\{\alpha_{n-1}\}}\) with the projective dual space \(\PP(V^{\ast})\).
The notion of transversality on \(\mathrm{Gr}_i(V) \times \mathrm{Gr}_{n-i}(V)\) from Definition~\ref{defi:para_transverse} is the natural one: a pair \((W_i,W_{n-i})\) is transverse if and only if \(W_i \oplus W_{n-i} = V\).

\subsection{Example: indefinite orthogonal groups}
\label{sec:orthogonal-groups}

Let \(b\) be a nondegenerate bilinear symmetric form of signature \((p,q)\) on a real vector space~\(V\).
Suppose that \((p,q)\neq (1,1)\) and that \(p\geq q>0\) (the case \(q\geq p>0\) is similar).
Let $G$ be the orthogonal group \(\OO(b)\).
There is a basis \((e_1, \dots, e_{p+q})\) of \(V\) such that for any \(x= \sum_{i=1}^{p+q} x_i e_i\) and \(y= \sum_{i=1}^{p+q} y_i e_i\),
\begin{equation*}
  b(x,y) = \sum_{i=1}^{q} \bigl( x_i y_{p+q-i+1} +x_{p+q-i+1} y_{i}\bigr) + \sum_{i=q+1}^{p} x_{i} y_i.
\end{equation*}
We may take \(K = \OO(p+q) \cap G\), which is isomorphic to \(\OO(p) \times \OO(q)\), and
\begin{equation*}
  \aaa = \{ \mathrm{diag}(\lambda_1 , \dots, \lambda_q, 0, \dots, 0, -\lambda_q, \dots, -\lambda_1) \mid \lambda_1, \dots, \lambda_q \in \RR\}.
\end{equation*}
Let \((\varepsilon_1, \dots, \varepsilon_q)\) be the standard basis of~\(\aaa^*\), \ie
\[ \langle\varepsilon_i, \mathrm{diag}(\lambda_1 , \dots, \lambda_q, 0, \dots, 0, -\lambda_q, \dots, -\lambda_1)\rangle = \lambda_i \]
for all~$i$.
The restricted root system is 
\begin{align*}
  \Sigma &= \{ \pm \varepsilon_i \pm \varepsilon_j \mid 1\leq i< j \leq q\} \cup \{
           \pm \varepsilon_i \mid 1\leq i \leq q \} \text{ if } p>q \quad \text{(type }B_q\text{)},\\
  \Sigma &= \{ \pm\varepsilon_i \pm \varepsilon_j \mid 1\leq i<j \leq p\} \text{
           if } p=q \quad (\text{type } D_p).
\end{align*}
A system of simple restricted roots is \(\Delta = \{ \alpha_1,\dots,\alpha_q\}\) where $\alpha_i = \varepsilon_i - \varepsilon_{i+1}$ for $1\leq i\leq q-1$ and
\begin{equation} \label{eq:simple_roots_B_q}
\alpha_q = \left\{ \begin{array}{ll}
           \varepsilon_q & \text{ if } p>q,\\
           \varepsilon_{q-1}+\varepsilon_q & \text{ if } p=q.
           \end{array}\right.
\end{equation}
The opposition involution fixes the simple root \(\alpha_1=\varepsilon_1-\varepsilon_2\).
 The parabolic subgroup \(P_{\{\alpha_1\}} = P_{\{\alpha_1\}^\star}\) will be denoted \(P_1(b)\); it is the stabilizer in \(\OO(b)\) of the line \(\RR e_1\).
The opposite parabolic subgroup \(P^{-}_{\{\alpha_1\}}\) is the stabilizer of \(\RR e_{p+q}\).
The flag variety $\F_{\{\alpha_1\}} = \OO(b)/P_1(b)$ identifies with the space of $b$-isotropic lines in~$V$ (a closed subset of $\PP(V)$) and will be denoted \(\mathcal{F}_1(b)\).
A pair \((\ell,\ell')\) of elements of \(\mathcal{F}_1(b)\) is transverse if and only if \(\ell^{\perp_{b}} + \ell' = V\).

Suppose \(p>q\).
For \(1\leq i\leq q\), the parabolic subgroup \(P_{\{\alpha_i\}}\) will be denoted \(P_i( b)\); it is the stabilizer in $\OO(b)$ of \(\RR e_1 \oplus \cdots \oplus \RR e_i\).
It is conjugate to its opposite, \(P^{-}_{\{\alpha_i\}}\), which is the stabilizer of \(\RR e_{p+q-i+1} \oplus \cdots \oplus \RR e_{p+q}\). 
The flag variety \(\mathcal{F}_i(b) = \OO(b)/P_i(b)\) is the space of \(b\)-isotropic \(i\)-dimensional subspaces of~\(V\). 
A pair \((W,W')\) in \(\mathcal{F}_i(b)\) is transverse in the sense of Definition~\ref{defi:para_transverse} if and only if \(W^{\perp_{b}}+ W'=V\).

Suppose \(p=q\).
For \(1\leq i\leq p-1\) we denote again by \(P_i(b)\) the stabilizer in \(\OO(b)\) of the $b$-isotropic $i$-dimensional subspace \(\RR e_1 \oplus \cdots \oplus \RR e_i\).
For \(i<p-1\), \(P_i(b)\) is \(P_{\{\alpha_i \}}\) and \(P_{p-1}(b)\) is \(P_{\{\alpha_{p-1}, \alpha_p\}}\). For any \(i<p\), \(P_i(b)\) is conjugate to its opposite.
The corresponding homogenous space \(\mathcal{F}_i(b)\) is the space of \(b\)-isotropic \(i\)-planes of \(V\).
Transversality is as above. 
The parabolic subgroups \(P_{\{\alpha_{p-1}\}}\) and \(P_{\{\alpha_{p}\}}\) can be viewed as stabilizers of isotropic \(p\)-planes; they are always conjugate under an element of \(\OO(b)\).
The opposition involution fixes \(\alpha_{p-1}\) and \(\alpha_p\) if \(p\) is even and exchanges them if \(p\) is odd.

\section{Divergence and Anosov representations}
\label{sec:anos-repr}

In this section we recall the notion of limit set in a broad setting, as well as the definition of Anosov representations, and establish some properties of Anosov representations that will be used later in the paper.
We continue with the notation of Section~\ref{sec:lie-groups}.

\subsection{$P_{\theta}$-divergence and limit sets}
\label{sec:diverg-repr}

Let $\theta\subset\Delta$ be a nonempty subset of the simple restricted roots of the noncompact real reductive Lie group~$G$.
As in \cite[\S\,5]{GGKW_anosov}, we define a map $\Xi_{\theta} : G\to\F_{\theta}$ as follows: for any $g\in G$, we choose $k_g, \ell_g \in K$ such that $g =  k_g \exp(\mu(g)) \ell_g$, and set
\begin{equation} \label{eqn:Xi-theta+-}
\Xi_{\theta}(g) = k_g \cdot P_{\theta} \in \F_{\theta} = G/P_{\theta}.
\end{equation}
This does not depend on the choice of $k_g, \ell_g$ as soon as $\langle\alpha,\mu(g)\rangle > 0$ for all $\alpha\in\theta$ (see \cite[Ch.\,IX, Cor.\,1.2]{Helgason}).
We adopt the following terminology.

\begin{definition}\label{defi:theta_divergent-seq}
A sequence $(g_n)\in G^{\NN}$ is \emph{$P_{\theta}$-divergent} if for any \(\alpha\in \theta\),
   \[ \lim_{n \to +\infty} \langle \alpha, \mu(g_n) \rangle = +\infty. \]
\end{definition}

\begin{definition}\label{def:lim-set}
Let $\Lambda$ be a discrete subgroup of~$G$.
Let $\theta\subset \Delta$ be a nonempty subset such that $\Lambda$ admits a $P_{\theta}$-divergent sequence.
The \emph{limit set} \(\mathcal{L}^{\mathcal{F}_\theta}_{\Lambda}\) of $\Lambda$ in $\F_\theta$ is the set of all limits in~$\F_{\theta}$ of sequences $(\Xi_{\theta}(\gamma_n))_{n\in\NN}$ where $(\gamma_n)\in\Lambda^{\NN}$ is $P_{\theta}$-divergent.
\end{definition}

By \cite[\S\,3.2]{Benoist}, if $\Lambda$ is Zariski-dense in~$G$, then $\Lambda$ contains \emph{$\theta$-proximal} elements, \ie elements with a unique attracting fixed point in~$\F_{\theta}$, and \(\mathcal{L}^{\mathcal{F}_\theta}_{\Lambda}\) is the closure of the set of attracting fixed points of these elements.

\begin{definition}\label{defi:theta_divergent-gp}
Let $\Gamma$ be a discrete group.
A representation \(\rho: \Gamma \to\nolinebreak G\) is \emph{\(P_\theta\)-divergent} if all sequences of pairwise distinct elements in $\rho(\Gamma)$ are~$P_{\theta}$-diver\-gent; equivalently, for any $\alpha\in\theta$, \(\lim_{\gamma \to \infty} \langle \alpha, \mu( \rho (\gamma))\rangle = +\infty\), \ie for any \(M>0\) the set \(\{\gamma\in \Gamma \mid\nolinebreak\langle\alpha,\mu(\rho(\gamma))\rangle <\nolinebreak M\}\) is finite.
\end{definition}

If $\rho : \Gamma\to G$ is $P_\theta$-divergent, then it has finite kernel and discrete image. 

\begin{remarks} \label{rem:div}
\begin{enumerate}
  \item\label{item:defitheta} A particular case of Definition~\ref{defi:theta_divergent-gp} was
  used in \cite[\S\,7.2]{Guichard_Wienhard_DoD}. 
  The definition is equivalent to the notion of \emph{weakly \(\tau_{mod}\)-regular} subgroup of \cite[Def.\,5.6]{KapovichLeebPorti14} where $\tau_{mod}$ is the facet $\overline{\aaa}^+ \cap \bigcap_{\alpha\in\Delta\smallsetminus\theta} \Ker(\alpha)$ of~$\overline{\aaa}^+$.
  \item \label{item:div-star} The equality $\langle\alpha,\mu(g)\rangle = \langle\alpha^{\star},\mu(g^{-1})\rangle$ for all $\alpha\in\Delta$ and $g\in G$ implies that a representation \(\rho: \Gamma \to G\) is \(P_\theta\)-divergent if and only if it is \(P_{\theta\cup\theta^{\star}}\)-divergent.
  \item\label{item:rem-div-Xi-g-inv} If \(g = k_g \exp(\mu(g)) \ell_g\) with $k_g,\ell_g\in K$, then \(\Xi_\theta (g^{-1}) = \ell_{g}^{-1} \cdot P_{\theta^\star}^{-}\) and \(\Xi_\theta(g^{-1})\) does not depend on the choices as soon as \(\langle \alpha, \mu(g)\rangle >0\) for all \(\alpha\in \theta^\star\).
Therefore, if a sequence \((\gamma_n)_{n\in \NN}\) of \(\Lambda^\NN\) is \(P_{\theta^\star}\)-divergent and if the sequence \(( \Xi_\theta( \gamma_{n}^{-1}))_{n\in \NN}\) converges, then its limit belongs to the limit set~\(\mathcal{L}^{\mathcal{F}_\theta}_{\Lambda}\).
  \end{enumerate}
\end{remarks}

\subsection{Anosov representations}

We now suppose that \(\Gamma\) is word hyperbolic and denote by \(\partial_\infty \Gamma\) its boundary at infinity. 
The following definition of Anosov representations is not the original one from \cite{Labourie_anosov, Guichard_Wienhard_DoD}, but an equivalent one taken from \cite{GGKW_anosov}.

\begin{definition} \label{defi:ano_theta}
Let $\Gamma$ be a word hyperbolic group.
  A representation \(\rho: \Gamma \to G\) is \emph{\(P_\theta\)-Anosov} if it is $P_\theta$-divergent and there exist continuous, \(\rho\)-equivariant maps \(\xi^+ : \partial_\infty \Gamma\to\mathcal{F}_\theta\) and \(\xi^- : \partial_\infty \Gamma\to
  \mathcal{F}_{\theta^\star}\) that are transverse and dynamics-preserving.
\end{definition}

By \emph{dynamics-preserving} we mean that if $\eta$ is the attracting fixed point of some element $\gamma\in\Gamma$ in $\partial_{\infty}\Gamma$, then $\xi^+(\eta)$ (\resp $\xi^-(\eta)$) is an attracting fixed point of $\rho(\gamma)$ in $\mathcal{F}_{\theta}$ (\resp $\mathcal{F}_{\theta^{\star}}$).
By \emph{transverse} we mean that pairs of distinct points in \(\partial_\infty \Gamma\) are sent to transverse pairs in $\mathcal{F}_\theta\times\mathcal{F}_{\theta^{\star}}$ (Definition~\ref{defi:para_transverse}).

The maps $\xi^+$ and~$\xi^-$ are unique, entirely determined by~$\rho$.
The set of $P_{\theta}$-Anosov representations is open in $\Hom(\Gamma,G)$ \cite{Labourie_anosov, Guichard_Wienhard_DoD}.

\begin{remarks} \label{rem:theta*Ano}
\begin{asparaenum}
  \item\label{item:star} By Remark~\ref{rem:div}.\eqref{item:div-star} (see also \cite[Lem.\,3.18]{Guichard_Wienhard_DoD}), the representation \(\rho: \Gamma \to G\) is \(P_\theta\)-Anosov if and only if it is \(P_{\theta\cup\theta^{\star}}\)-Anosov.
  \item \label{item:xi+=xi-} When \(\theta=\theta^\star\), the two flags varieties \(\mathcal{F}_\theta\) and~\(\mathcal{F}_{\theta^\star}\) coincide and the two boundary maps \(\xi^+\) and~\(\xi^-\) of a $P_{\theta}$-Anosov representation are equal.
\end{asparaenum}
\end{remarks}

\begin{example} \label{ex:Ano-GL}
Let $G=\GL_\RR(V)$ and $\theta=\{\alpha_i\}=\{\varepsilon_i-\varepsilon_{i+1}\}$.
The boundary maps of a \(P_i\)-Anosov representation \(\rho:\Gamma\to G\) are a pair of continuous maps
\begin{equation*}
  \xi_i=\xi^+: \partial_\infty\Gamma \longrightarrow \mathrm{Gr}_i(V) \quad
  \text{and} \quad \xi_{n-i}=\xi^-: \partial_\infty\Gamma \longrightarrow \mathrm{Gr}_{n-i}(V)
\end{equation*}
such that \(\xi_i(\eta)\oplus\xi_{n-i}(\eta')=V\) for all $\eta\neq\eta'$ in \(\partial_\infty \Gamma\), and such that for any \(\gamma\in\Gamma\) with attracting fixed point \(\eta\) in \(\partial_\infty \Gamma\), the element \(\rho(\gamma)\) has attracting fixed points \(\xi_i(\eta)\) in \(\mathrm{Gr}_i(V)\) and \(\xi_{n-i}(\eta)\) in \(\mathrm{Gr}_{n-i}(V)\). 
Here $P_\theta$-divergence means
\begin{equation*}
  \lim_{\gamma\to \infty} \langle \varepsilon_i - \varepsilon_{i+1}, \mu(\rho(\gamma))\rangle = +\infty.
\end{equation*}
\end{example}

\begin{example} \label{ex:Ano-Opq}
Let $G=\OO(b)$ be the orthogonal group of a symmetric bilinear form of signature \((p,q)\) on a real vector space~\(V\), where $p,q\in\NN^*$ and $(p,q)\neq (1,1)$.
Let $\theta=\{ \alpha_i\}=\theta^\star$ where $1\leq i\leq\min(p,q)$; if $p=q$, we assume $i\neq p$ and \(\theta = \{\alpha_{p-1}, \alpha_p\}\) when \(i=p-1\) (see \eqref{eq:simple_roots_B_q}). 
By Remark~\ref{rem:theta*Ano}.\eqref{item:xi+=xi-}, for a \(P_i(b)\)-Anosov representation \(\rho: \Gamma\to G\) there is just one continuous \(\rho\)-equivariant boundary map \(\xi:\partial_\infty \Gamma\to \mathcal{F}_i(b)\).
It is dynamics-preserving and satisfies \(\xi(\eta)^{\perp_{b}} \oplus \xi(\eta')=V\) for all \(\eta\neq\eta'\) in \(\partial_\infty\Gamma\).
Here $P_\theta$-divergence means
\begin{equation*}
  \lim_{\gamma\to \infty} \langle \alpha_i, \mu(\rho(\gamma))\rangle = +\infty.
\end{equation*}
If \(p= q\) and \(i = p-1\), then the limit \(\lim_{\gamma\to \infty} \langle \alpha_{p} , \mu(\rho(\gamma))\rangle =  +\infty\) is also part of $P_\theta$-divergence.
\end{example}

For general $G$ and~$\theta$, we shall use the following description of the limit set.

\begin{lemma}\cite[Th.\,5.2]{GGKW_anosov}
  \label{lem:anosov-lim-set}
  If a representation \(\rho: \Gamma \to G\) is \(P_{\theta}\)-Anosov with boundary map \(\xi^+: \partial_\infty \Gamma \to \mathcal{F}_\theta\), then \(\mathcal{L}^{\mathcal{F}_\theta}_{\rho(\Gamma)} = \xi^+( \partial_\infty \Gamma) \).
\end{lemma}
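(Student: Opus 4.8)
The plan is to prove the two inclusions separately, after a preliminary reduction to the case $\theta=\theta^{\star}$. Indeed, by Remark~\ref{rem:theta*Ano}.\eqref{item:star} the representation $\rho$ is also $P_{\theta\cup\theta^{\star}}$-Anosov; the natural $G$-equivariant projection $\pi\colon\mathcal{F}_{\theta\cup\theta^{\star}}\to\mathcal{F}_{\theta}$ satisfies $\pi\circ\Xi_{\theta\cup\theta^{\star}}=\Xi_{\theta}$ and $\pi\circ\xi^{+}_{\theta\cup\theta^{\star}}=\xi^{+}_{\theta}$; and any $P_{\theta}$-divergent sequence of $\rho(\Gamma)$ leaves every finite subset of $\Gamma$, hence, since $\rho$ is $P_{\theta\cup\theta^{\star}}$-divergent, is itself $P_{\theta\cup\theta^{\star}}$-divergent. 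It follows that $\mathcal{L}^{\mathcal{F}_{\theta}}_{\rho(\Gamma)}=\pi\bigl(\mathcal{L}^{\mathcal{F}_{\theta\cup\theta^{\star}}}_{\rho(\Gamma)}\bigr)$ and $\xi^{+}_{\theta}(\partial_{\infty}\Gamma)=\pi\bigl(\xi^{+}_{\theta\cup\theta^{\star}}(\partial_{\infty}\Gamma)\bigr)$, so it suffices to prove the lemma for $\theta\cup\theta^{\star}$. We may therefore assume $\theta=\theta^{\star}$, so that $\mathcal{F}_{\theta}=\mathcal{F}_{\theta^{\star}}$ and, by Remark~\ref{rem:theta*Ano}.\eqref{item:xi+=xi-}, the boundary maps $\xi^{+}$ and $\xi^{-}$ coincide; write $\xi:=\xi^{+}=\xi^{-}$.

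For the inclusion $\xi(\partial_{\infty}\Gamma)\subseteq\mathcal{L}^{\mathcal{F}_{\theta}}_{\rho(\Gamma)}$, first note that $\mathcal{L}^{\mathcal{F}_{\theta}}_{\rho(\Gamma)}$ is closed (a diagonal extraction) and that $\xi$ is continuous on the compact space $\partial_{\infty}\Gamma$; hence it is enough to check $\xi(\eta)\in\mathcal{L}^{\mathcal{F}_{\theta}}_{\rho(\Gamma)}$ when $\eta$ ranges over the dense set of attracting fixed points of infinite-order elements of $\Gamma$ (the case of finite $\Gamma$ being vacuous). Given such a $\gamma$, with attracting and repelling fixed points $\eta^{+}_{\gamma}$ and $\eta^{-}_{\gamma}$, the dynamics-preserving and transversality properties of Definition~\ref{defi:ano_theta} show that $\rho(\gamma)$ is $\theta$-proximal with attracting fixed point $\xi(\eta^{+}_{\gamma})$. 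Since $\rho(\gamma)$ has infinite order, $(\rho(\gamma)^{n})_{n}$ is $P_{\theta}$-divergent and $\gamma^{n}\to\eta^{+}_{\gamma}$; the standard fact that the Cartan attracting flag $\Xi_{\theta}(g^{n})$ of the iterates of a $\theta$-proximal element $g$ converges to its attracting fixed point (see \cite{Benoist}, \cite{GGKW_anosov}) then gives $\xi(\eta^{+}_{\gamma})=\lim_{n}\Xi_{\theta}(\rho(\gamma)^{n})\in\mathcal{L}^{\mathcal{F}_{\theta}}_{\rho(\Gamma)}$.

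For the reverse inclusion, let $x=\lim_{n}\Xi_{\theta}(\rho(\gamma_{n}))$ for some $P_{\theta}$-divergent sequence $(\rho(\gamma_{n}))$ of $\rho(\Gamma)$. Then $\rho(\gamma_{n})\to\infty$ in $G$, and since $\rho$ has finite kernel and discrete image the sequence $(\gamma_{n})$ leaves every finite subset of $\Gamma$; passing to a subsequence we may assume $\gamma_{n}\to\eta$ and $\gamma_{n}^{-1}\to\eta'$ in $\Gamma\cup\partial_{\infty}\Gamma$. Writing $\rho(\gamma_{n})=k_{n}\exp(\mu(\rho(\gamma_{n})))\ell_{n}$ with $k_{n},\ell_{n}\in K$ and passing to a further subsequence so that $k_{n}\to k$, $\ell_{n}\to\ell$, we get $x=k\cdot P_{\theta}$. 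I would then invoke the group-theoretic contraction lemma (see e.g.\ \cite{Benoist}, \cite{GGKW_anosov}): since $\langle\alpha,\mu(\rho(\gamma_{n}))\rangle\to+\infty$ for every $\alpha\in\theta$, one has $\rho(\gamma_{n})\cdot F\to x$ uniformly for $F$ ranging over a compact subset of the open dense set of flags of $\mathcal{F}_{\theta}$ transverse to $\ell^{-1}\cdot P_{\theta}^{-}$. Now $\ell^{-1}\cdot P_{\theta}^{-}=\lim_{n}\ell_{n}^{-1}\cdot P_{\theta}^{-}=\lim_{n}\Xi_{\theta}(\rho(\gamma_{n}^{-1}))$ (Remark~\ref{rem:div}.\eqref{item:rem-div-Xi-g-inv}, using $\theta=\theta^{\star}$), and the crucial point (see below) is that this limit equals $\xi(\eta')$. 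Granting this, pick any $\zeta\in\partial_{\infty}\Gamma$ with $\zeta\neq\eta'$; then $\xi(\zeta)$ is transverse to $\xi(\eta')$ by transversality of $\xi$, so the contraction lemma applies to $F=\xi(\zeta)$ and yields $\rho(\gamma_{n})\cdot\xi(\zeta)\to x$. On the other hand $\rho(\gamma_{n})\cdot\xi(\zeta)=\xi(\gamma_{n}\zeta)$ by $\rho$-equivariance, while $\gamma_{n}\zeta\to\eta$ because hyperbolic groups act as convergence groups on their boundary and $\zeta\neq\eta'$; by continuity of $\xi$ this gives $\rho(\gamma_{n})\cdot\xi(\zeta)\to\xi(\eta)$. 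Hence $x=\xi(\eta)\in\xi(\partial_{\infty}\Gamma)$, completing the proof.

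The main obstacle is the identification $\lim_{n}\Xi_{\theta}(\rho(\gamma_{n}^{-1}))=\xi(\eta')$ used above --- equivalently, and more symmetrically, the assertion that $\Xi_{\theta}(\rho(\delta_{n}))\to\xi(\zeta_{0})$ for \emph{every} $P_{\theta}$-divergent sequence $(\rho(\delta_{n}))$ with $\delta_{n}\to\zeta_{0}$, not merely along geodesic rays or for iterates of proximal elements. This cannot be reduced to the contraction lemma and the bare existence of transverse boundary maps: an Anosov representation may have its limit set contained in a Schubert subvariety, so a given limiting ``Cartan repelling flag'' need not be transverse to any point of $\xi(\partial_{\infty}\Gamma)$, and excluding that configuration requires the full strength of the Anosov hypothesis. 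Concretely, one controls the $K$-parts $\ell_{n}$ by using that $\rho$ is a quasi-isometric embedding together with the contraction/dilation of $\xi^{\pm}$ along the geodesic flow, proving simultaneously that $\Xi_{\theta}(\rho(\gamma_{n}))\to\xi(\eta)$ and $\Xi_{\theta}(\rho(\gamma_{n}^{-1}))\to\xi(\eta')$; this is precisely \cite[Th.\,5.2]{GGKW_anosov}. (When $\Gamma$ is virtually cyclic the whole lemma is immediate, as $\rho(\Gamma)$ is then generated, up to finite index, by a single $\theta$-proximal element whose two fixed points are the $\xi$-images of the two points of $\partial_{\infty}\Gamma$.)
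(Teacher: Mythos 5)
The paper does not actually prove this lemma: it is quoted directly from \cite[Th.\,5.2]{GGKW_anosov}, so there is no internal argument to compare yours against. Your proposal is consistent with that citation and is correct as far as it goes. The preliminary reduction to $\theta=\theta^{\star}$ is fine, and your proof of the inclusion $\xi^{+}(\partial_{\infty}\Gamma)\subset\mathcal{L}^{\mathcal{F}_{\theta}}_{\rho(\Gamma)}$ (closedness of the limit set, density of attracting fixed points, dynamics-preservation, and convergence of $\Xi_{\theta}$ along powers of a proximal element) is complete. For the reverse inclusion you correctly identify the crux: to apply the contraction lemma to $F=\xi(\zeta)$ one must know that the limiting repelling flag $\lim_{n}\Xi_{\theta}(\rho(\gamma_{n}^{-1}))$ equals $\xi(\eta')$, and transversality of the boundary maps alone does not give this, since $\xi(\partial_{\infty}\Gamma)$ may lie entirely inside the Schubert variety of flags non-transverse to a given flag. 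You do not prove this convergence but attribute it to \cite[Th.\,5.2]{GGKW_anosov}; since that convergence statement is precisely the substance of the theorem being cited for the lemma itself, your argument ultimately rests on the same external input as the paper's, merely repackaged as a derivation of the limit-set identity from the stronger flag-convergence statement. That derivation is valid, so the proposal is acceptable, but it is not a self-contained proof: the analytic heart (controlling the $K$-components $\ell_{n}$ using the quasi-isometric-embedding and contraction properties of an Anosov representation) remains outsourced, exactly as in the paper.
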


\subsection{$\theta$-compatibility} \label{sec:anos-repr-orth}

We shall use the following terminology from \cite{GGKW_anosov}.

\begin{definition} \label{defi:theta-compatible}
Let $V$ be a finite-dimensional real vector space and $\theta\subset\nolinebreak\Delta$ a nonempty subset of the simple restricted roots of~$G$.
An irreducible representation $\tau : G\to\GL_{\RR}(V)$ with highest weight~$\chi_{\tau}$ is \emph{$\theta$-compatible} if
\[ \{\alpha \in \Delta \mid (\chi_\tau, \alpha) > 0\} = \theta. \]
\end{definition}

The following proposition was proved in \cite{GGKW_anosov} for $i=1$.

\begin{proposition}\label{prop:theta-comp-Anosov}
Let $(\tau,V)$ be an irreducible, $\theta$-compatible linear representation of~$G$ over~$\RR$.
Let $V^{\chi_{\tau}}$ be the weight space corresponding to the highest weight, let \(i=:\dim_\RR(V^{\chi_\tau}) < n=:\dim_\RR(V)\), and let \(V_{< \chi_\tau}\) be the sum of all the other weight spaces of~$\tau$.
\begin{asparaenum}
\item For any discrete group \(\Gamma\) and any representation \(\rho: \Gamma \to G\),
 \begin{center}
  \(\rho:\Gamma\to G\) is \(P_{\theta}\)-divergent
  \(\Longleftrightarrow\) \(\tau \circ \rho:\Gamma\to \GL_{\RR}(V)\) is \(P_i\)-divergent.
  \end{center}
  \item For any word hyperbolic group \(\Gamma\) and any representation \(\rho: \Gamma \to G\),
 \begin{center}
  \(\rho:\Gamma\to G\) is \(P_{\theta}\)-Anosov
  \(\Longleftrightarrow\) \(\tau \circ \rho:\Gamma\to \GL_{\RR}(V)\) is \(P_i\)-Anosov.
  \end{center}  
  In this case, the boundary maps \(\xi^+ : \partial_{\infty}\Gamma\to G/P_{\theta}\) and \(\xi^- : \partial_{\infty}\Gamma\to G/P_{\theta}^-\) of~\(\rho\) and the boundary maps $\xi_i : \partial_{\infty}\Gamma\to \mathrm{Gr}_i(V)$ and $\xi_{n-i} : \partial_{\infty}\Gamma\to\mathrm{Gr}_{n-i}(V)$ of $\tau\circ\rho$ are related as follows:
for any $\eta\in\partial_{\infty}\Gamma$, if $(\xi^+(\eta), \xi^-(\eta)) = (gP_{\theta}, gP_{\theta}^-)$ where $g\in G$, then $(\xi_i(\eta), \xi_{n-i}(\eta)) = (\tau(g)V^{\chi_{\tau}}, \tau(g)V_{< \chi_{\tau}})$.
\end{asparaenum}
\end{proposition}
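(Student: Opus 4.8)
The plan is to deduce the proposition from two comparisons between~$G$ and $\GL_{\RR}(V)$ supplied by $\theta$-compatibility, one at the level of Cartan (and Jordan) projections and one at the level of flag varieties; this is the strategy used for $i=1$ in \cite{GGKW_anosov}, and the task is the extra bookkeeping needed when $i=\dim_\RR V^{\chi_\tau}>1$. First I would fix a $K$-invariant Euclidean norm on~$V$ for which $\tau(\exp(\overline{\aaa}^+))$ acts by diagonal positive matrices with eigenvalues $e^{\langle\lambda,\,\cdot\,\rangle}$, $\lambda$ running over the weights of~$\tau$ with multiplicity (available since $\tau$ is irreducible). Then the singular values of~$\tau(g)$ are the $e^{\langle\lambda,\mu(g)\rangle}$, and since $\chi_\tau$ is the highest weight and occurs with multiplicity~$i$, one gets $\langle\varepsilon_i,\mu_{\GL}(\tau(g))\rangle=\langle\chi_\tau,\mu(g)\rangle$ and $\langle\varepsilon_{i+1},\mu_{\GL}(\tau(g))\rangle=\max_{\lambda\neq\chi_\tau}\langle\lambda,\mu(g)\rangle$ for all $g\in G$, where $\mu_{\GL}$ is the Cartan projection of $\GL_{\RR}(V)$; hence
\[ \langle\varepsilon_i-\varepsilon_{i+1},\,\mu_{\GL}(\tau(g))\rangle = \min_{\lambda\neq\chi_\tau}\,\langle\chi_\tau-\lambda,\,\mu(g)\rangle . \]
Writing $\chi_\tau-\lambda=\sum_{\alpha\in\Delta}n_\alpha(\lambda)\,\alpha$ with integers $n_\alpha(\lambda)\geq 0$, I would prove that $\sum_{\alpha\in\theta}n_\alpha(\lambda)\geq 1$ for every weight $\lambda\neq\chi_\tau$ --- equivalently, that no weight other than~$\chi_\tau$ lies in $\chi_\tau-\sum_{\alpha\in\Delta\smallsetminus\theta}\NN\,\alpha$ --- by taking such a hypothetical~$\lambda$ with $\sum_\alpha n_\alpha(\lambda)$ minimal, choosing a simple root~$\beta$ with $\lambda+\beta$ still a weight, and deriving a contradiction from minimality and the $\theta$-compatibility relation ``$(\chi_\tau,\alpha)>0\Leftrightarrow\alpha\in\theta$'' (for $\alpha\in\Delta$), which also shows each $\chi_\tau-\alpha$ with $\alpha\in\theta$ is a weight. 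Since $\langle\alpha,\mu(g)\rangle\geq 0$ for all $\alpha\in\Delta$, the displayed minimum then equals $\min_{\alpha\in\theta}\langle\alpha,\mu(g)\rangle$; the same computation with the Jordan projection gives the analogous identity for translation lengths. Assertion~(1) follows at once, since $P_\theta$-divergence of~$\rho$ means precisely $\min_{\alpha\in\theta}\langle\alpha,\mu(\rho(\gamma))\rangle\to+\infty$ (Definition~\ref{defi:theta_divergent-gp}).

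\emph{Flag-variety comparison.} Ordering the weight basis so that $V^{\chi_\tau}=\mathrm{span}(e_1,\dots,e_i)$ and $V_{<\chi_\tau}=\mathrm{span}(e_{i+1},\dots,e_n)$, I would check, using $\theta$-compatibility, that $P_\theta$ and $P_\theta^-$ are exactly the stabilizers in~$G$ of $V^{\chi_\tau}$ and $V_{<\chi_\tau}$ (one inclusion is clear, as positive root vectors kill~$V^{\chi_\tau}$; for the other, a root vector in~$\g_\alpha$ with $\alpha\in\theta$ sends $V^{\chi_\tau-\alpha}\neq 0$ isomorphically into $V^{\chi_\tau}$ by $\mathfrak{sl}_2$-theory for~$\alpha$, hence cannot preserve $V=V^{\chi_\tau}\oplus V_{<\chi_\tau}$). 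It follows that the $G$-equivariant maps
\[ \iota^+\colon\ \F_\theta\to\mathrm{Gr}_i(V),\ \ gP_\theta\mapsto\tau(g)V^{\chi_\tau}, \qquad \iota^-\colon\ \F_{\theta^\star}\to\mathrm{Gr}_{n-i}(V),\ \ gP_\theta^-\mapsto\tau(g)V_{<\chi_\tau}, \]
are well-defined closed embeddings. I would then prove the transversality dictionary: $(p,q)\in\F_\theta\times\F_{\theta^\star}$ is transverse (Definition~\ref{defi:para_transverse}) if and only if $\iota^+(p)\oplus\iota^-(q)=V$. The ``only if'' follows from $G$-equivariance, since the model transverse pair $(P_\theta,P_\theta^-)$ maps to the transverse pair $(V^{\chi_\tau},V_{<\chi_\tau})$. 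For the ``if'' I would reduce by equivariance to $p=P_\theta$, use the Bruhat decomposition of~$G$ with respect to $(P_\theta,P_\theta^-)$ to write $q$ as $pwp'$, note that $\iota^+(P_\theta)\cap\iota^-(q)=\tau(p)\bigl(V^{\chi_\tau}\cap\tau(w)V_{<\chi_\tau}\bigr)$ and that $\tau(w)$ permutes the weight spaces according to~$w$, so this intersection is nonzero exactly when $w^{-1}\chi_\tau$ is a weight distinct from~$\chi_\tau$, i.e.\ exactly when $w^{-1}\chi_\tau\neq\chi_\tau$ (a Weyl conjugate of the highest weight always being a weight), which by $\theta$-compatibility ($\mathrm{Stab}_W(\chi_\tau)=W_{\Delta\smallsetminus\theta}$) happens exactly when $q$ lies outside the open $P_\theta$-orbit, i.e.\ when $(p,q)$ is not transverse. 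Finally, comparing~\eqref{eqn:Xi-theta+-} and Remark~\ref{rem:div}.\eqref{item:rem-div-Xi-g-inv} with the good-norm description of~$\tau(g)$ shows that the $\Xi$-maps intertwine along~$\iota^\pm$ whenever the relevant gaps are positive, so $\iota^+$ carries $\mathcal{L}^{\F_\theta}_{\rho(\Gamma)}$ onto $\mathcal{L}^{\mathrm{Gr}_i(V)}_{\tau\circ\rho(\Gamma)}$ and $\iota^-$ carries $\mathcal{L}^{\F_{\theta^\star}}_{\rho(\Gamma)}$ onto $\mathcal{L}^{\mathrm{Gr}_{n-i}(V)}_{\tau\circ\rho(\Gamma)}$ whenever $\rho$ is $P_\theta$-divergent.

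\emph{Proof of~(2).} If $\rho$ is $P_\theta$-Anosov with boundary maps $\xi^\pm$, then $\tau\circ\rho$ is $P_i$-divergent by~(1), and $\iota^+\circ\xi^+$, $\iota^-\circ\xi^-$ are continuous, $(\tau\circ\rho)$-equivariant, transverse (dictionary), and dynamics-preserving --- a $\theta$-proximal element of~$G$ has $i$-proximal image under~$\tau$ with attracting $i$-plane the $\iota^+$-image of its attracting flag, by the Jordan version of the first step --- so $\tau\circ\rho$ is $P_i$-Anosov with precisely the stated boundary maps. Conversely, suppose $\tau\circ\rho$ is $P_i$-Anosov with boundary maps $\xi_i,\xi_{n-i}$. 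By~(1) $\rho$ is $P_\theta$-divergent, hence also $P_{\theta^\star}$-divergent (Remark~\ref{rem:div}.\eqref{item:div-star}). For every infinite-order $\gamma\in\Gamma$ with attracting fixed point $\eta\in\partial_\infty\Gamma$, dynamics-preservation makes $\tau\circ\rho(\gamma)$ an $i$-proximal element with attracting $i$-plane $\xi_i(\eta)$ and repelling $(n-i)$-plane $\xi_{n-i}(\eta)$; by the Jordan version of the first step $\rho(\gamma)$ is then $\theta$-proximal, with attracting flag $p_\gamma\in\F_\theta$ and repelling flag $q_\gamma\in\F_{\theta^\star}$, and $\iota^+(p_\gamma)=\xi_i(\eta)$, $\iota^-(q_\gamma)=\xi_{n-i}(\eta)$. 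Since such~$\eta$ are dense in $\partial_\infty\Gamma$ (the elementary case being trivial) and $\iota^+(\F_\theta)$, $\iota^-(\F_{\theta^\star})$ are closed, continuity gives $\xi_i(\partial_\infty\Gamma)\subset\iota^+(\F_\theta)$ and $\xi_{n-i}(\partial_\infty\Gamma)\subset\iota^-(\F_{\theta^\star})$; as $\iota^\pm$ are homeomorphisms onto their images, $\xi_i$ and $\xi_{n-i}$ factor uniquely as $\iota^+\circ\xi^+$ and $\iota^-\circ\xi^-$ with $\xi^+\colon\partial_\infty\Gamma\to\F_\theta$, $\xi^-\colon\partial_\infty\Gamma\to\F_{\theta^\star}$ continuous and $\rho$-equivariant. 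Transversality of $(\xi^+,\xi^-)$ follows from that of $(\xi_i,\xi_{n-i})$ by the dictionary, and dynamics-preservation follows from $\iota^+(\xi^+(\eta))=\xi_i(\eta)=\iota^+(p_\gamma)$ together with injectivity of~$\iota^+$ (similarly for~$\xi^-$). Hence $\rho$ is $P_\theta$-Anosov, with boundary maps related to those of $\tau\circ\rho$ exactly as asserted.

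I expect the main obstacle to be the flag-variety comparison --- the stabilizer identifications $\mathrm{Stab}_G(V^{\chi_\tau})=P_\theta$, $\mathrm{Stab}_G(V_{<\chi_\tau})=P_\theta^-$ and the ``if'' direction of the transversality dictionary, which is where $\theta$-compatibility enters essentially --- together with the combinatorial inequality $\sum_{\alpha\in\theta}n_\alpha(\lambda)\geq 1$ in the first step; both are the careful-but-routine extensions to $i>1$ of what is done for $i=1$ in \cite{GGKW_anosov}, and everything else is formal or quoted from the facts on Anosov representations and proximal elements recalled above.
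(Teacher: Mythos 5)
Your proposal is correct and follows essentially the same route as the paper, whose proof simply defers to \cite[Prop.\,4.6 \& 4.8]{GGKW_anosov} after noting the key identity $\langle\alpha_i,\mu_{\GL_\RR(V)}(\tau(g))\rangle=\min_{\alpha\in\theta}\langle\alpha,\mu_G(g)\rangle$ --- exactly the Cartan-projection comparison you establish in your first step. The rest of your argument (stabilizer identification of $P_\theta$ and $P_\theta^-$, the transversality dictionary via Bruhat cells, and the reconstruction of $\xi^\pm$ from $\xi_i,\xi_{n-i}$ using density of attracting fixed points) is a faithful reconstruction of the cited argument adapted to $i>1$.
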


\begin{proof}
Identical to the proof of \cite[Prop.\,4.6 \& 4.8]{GGKW_anosov}: one just needs to replace \cite[Lem.\,4.10.(3)]{GGKW_anosov} with the fact (following from \cite[Lem.\,4.10.(1)\&(2)]{GGKW_anosov}) that for any $g\in G$,
\[ \langle \alpha_i, \mu_{\GL_{\RR}(V)}(\tau(g)) \rangle = \min_{\alpha\in\theta} \langle \alpha, \mu_G(g) \rangle. \qedhere\]
\end{proof}

The following result is an easy consequence of Proposition~\ref{prop:theta-comp-Anosov} with $i=1$; we shall use it to reduce to the group $\OO(b)$ in the proof of Theorem~\ref{thm:main}.

\begin{proposition}[{\cite[Lem.\,4.10, Prop.\,6.7 \& Rem.\,6.9]{GGKW_anosov}}] \label{prop:PthetaG_P1opq}
  Let \(\theta\subset\Delta\) be a nonempty subset of the simple restricted roots of~$G$.
  Then there exist a nondegenerate  symmetric bilinear form \(b\) on a real vector space~$V$ and a homomorphism \(\tau: G\to\OO(b)\) with the following properties:
  \begin{asparaenum}
\item For any discrete group \(\Gamma\) and any representation \(\rho: \Gamma \to G\),
 \begin{center}
  \(\rho:\Gamma\to G\) is \(P_{\theta}\)-divergent
  \(\Longleftrightarrow\) \(\tau \circ \rho:\Gamma\to \OO(b)\) is \(P_1(b)\)-divergent.
  \end{center}
  \item For any word hyperbolic group \(\Gamma\) and any representation \(\rho: \Gamma \to G\),
 \begin{center}
  \(\rho:\Gamma\to G\) is \(P_{\theta}\)-Anosov
  \(\Longleftrightarrow\) \(\tau \circ \rho:\Gamma\to \OO(b)\) is \(P_1(b)\)-Anosov.
  \end{center}  
  \end{asparaenum}
\end{proposition}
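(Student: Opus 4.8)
The plan is to deduce this proposition from Proposition~\ref{prop:theta-comp-Anosov} by exhibiting a single linear representation of $G$ that is $\theta$-compatible \emph{and} carries an invariant nondegenerate symmetric bilinear form, together with a $P_1$-Anosov-to-$P_1(b)$-Anosov comparison for orthogonal groups. Concretely, I would first recall (this is the content of \cite[Lem.\,4.10]{GGKW_anosov}) that for every nonempty $\theta\subset\Delta$ there exists an irreducible representation $(\tau_0,V_0)$ of $G$ over $\RR$ with highest weight $\chi$ satisfying $\{\alpha\in\Delta \mid (\chi,\alpha)>0\}=\theta$, i.e.\ a $\theta$-compatible representation. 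By Proposition~\ref{prop:theta-comp-Anosov} applied to $(\tau_0,V_0)$, a representation $\rho:\Gamma\to G$ is $P_\theta$-divergent (resp.\ $P_\theta$-Anosov) if and only if $\tau_0\circ\rho$ is $P_i$-divergent (resp.\ $P_i$-Anosov), where $i=\dim_\RR V_0^{\chi}$.

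The second step reduces the general index $i$ to $i=1$ by passing to an exterior power. If $\tau_0$ is $P_i$-divergent/Anosov, then $\Lambda^i\tau_0 : G\to\GL_\RR(\Lambda^i V_0)$ is $P_1$-divergent/Anosov: indeed the exterior power representation has $\Lambda^i V_0^{\chi} \oplus (\text{lower terms})$ as its weight decomposition, its highest weight line is one-dimensional, and the attracting dynamics of $\rho(\gamma)$ on $\mathrm{Gr}_i(V_0)$ corresponds to attracting dynamics of $(\Lambda^i\tau_0)(\rho(\gamma))$ on $\PP(\Lambda^i V_0)$ via the Plücker embedding; the divergence statement follows from $\langle\alpha_1,\mu_{\GL(\Lambda^iV_0)}(\Lambda^i\tau_0(g))\rangle = \langle\alpha_i,\mu_{\GL(V_0)}(\tau_0(g))\rangle$, which is part of the formula used in the proof of Proposition~\ref{prop:theta-comp-Anosov}. (This is exactly Prop.\,6.7 of \cite{GGKW_anosov}.) Alternatively, one can choose the $\theta$-compatible $\tau_0$ from the outset to have a one-dimensional highest weight space — this is always possible by replacing a fundamental representation with an irreducible subrepresentation of a suitable tensor power — which folds Steps~1 and~2 together. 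Either way we arrive at an irreducible $P_1$-compatible representation $\tau_1 : G\to\GL_\RR(W)$.

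The third step makes the target orthogonal. Since $\tau_1$ is irreducible over $\RR$, either it preserves a nondegenerate symmetric bilinear form $b$ on $W$, or it does not — but in the latter case we replace $W$ by $W\oplus W^*$ with the natural hyperbolic pairing $b$; the representation $\tau_1\oplus\tau_1^*$ is still $P_{\theta}$-compatible in the relevant sense (its highest weight space is still one-dimensional, sitting inside the $W$-factor, since the dominant weights of $W^*$ are $-w_0\chi=\chi^\star$, which is strictly dominated by $\chi$ after we also ensure $\theta^\star$-behaviour; here one invokes Remark~\ref{rem:div}.\eqref{item:div-star} and Remark~\ref{rem:theta*Ano}.\eqref{item:star} to replace $\theta$ by $\theta\cup\theta^\star$ at the start so that $\chi$ and $\chi^\star$ play symmetric roles). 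In all cases we obtain $\tau : G\to\OO(b)$ with $\tau$ still $P_1$-divergent/Anosov iff $\rho$ is $P_\theta$-divergent/Anosov. Finally, $P_1$-divergence/Anosov-ness in $\GL_\RR(V)$ for a representation landing in $\OO(b)$ is equivalent to $P_1(b)$-divergence/Anosov-ness in $\OO(b)$: the $\GL$-Cartan projection restricted to $\OO(b)$ differs from the $\OO(b)$-Cartan projection only by the opposition symmetry built into the form, so $\langle\varepsilon_1-\varepsilon_2,\mu_{\GL}(\tau(g))\rangle$ and $\langle\alpha_1,\mu_{\OO(b)}(\tau(g))\rangle$ tend to $+\infty$ simultaneously, and the boundary map into the isotropic-line variety $\F_1(b)\subset\PP(V)$ is the $\GL$-boundary map, whose transversality ($\xi(\eta)\oplus\xi^-(\eta')=V$) specializes to $\xi(\eta)^{\perp_b}\oplus\xi(\eta')=V$ because $\tau$ preserves $b$. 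Composing the three equivalences gives the proposition.

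The main obstacle is the bookkeeping in Step~3: ensuring that after adding $W^*$ (or, more precisely, choosing $\tau_1$ so that it is self-dual up to the opposition involution) the resulting orthogonal representation is genuinely $P_1(b)$-compatible, i.e.\ that the highest weight space stays one-dimensional and that no accidental coincidence of weights destroys the gap $\chi > \chi^\star > V_{<\chi}$ needed to invoke Proposition~\ref{prop:theta-comp-Anosov}. This is precisely where one must pass to $\theta\cup\theta^\star$ and appeal to the fine structure of \cite[Lem.\,4.10]{GGKW_anosov}; everything else is a routine translation of Cartan projections and dynamics along the Plücker and $\OO(b)\hookrightarrow\GL_\RR(V)$ embeddings.
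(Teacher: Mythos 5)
Your overall route is the one the paper intends: produce an irreducible $\theta$-compatible representation with one-dimensional highest restricted weight space that lands in an orthogonal group (this is exactly what the cited \cite[Lem.\,4.10, Prop.\,6.7 \& Rem.\,6.9]{GGKW_anosov} supplies), then apply Proposition~\ref{prop:theta-comp-Anosov} with $i=1$ and the $\OO(b)\hookrightarrow\GL_\RR(V)$ comparison of Lemma~\ref{lem:theta-comp-GL-Opq}. Your Steps 1, 2 and 4 are correct and match this.

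Step 3, however, contains a genuine gap: the fallback $W\oplus W^*$ with the hyperbolic pairing does not work. First, $\tau_1\oplus\tau_1^*$ is reducible, so Proposition~\ref{prop:theta-comp-Anosov}, which is stated for irreducible $\tau$, cannot be invoked for it. More seriously, the singular value analysis fails in both possible cases. If you first replace $\theta$ by $\theta\cup\theta^\star$ and arrange $\chi^\star=\chi$ (as you suggest), then $W$ and $W^*$ have the \emph{same} highest restricted weight $\chi$, so for every $g$ the top singular value of $\tau_1(g)\oplus\tau_1^*(g)$, namely $e^{\langle\chi,\mu_G(g)\rangle}=e^{\langle\chi^\star,\mu_G(g)\rangle}$, is attained in both summands; hence $\langle\varepsilon_1-\varepsilon_2,\mu_{\GL_\RR(W\oplus W^*)}(\tau_1(g)\oplus\tau_1^*(g))\rangle=0$ identically and $P_1$-divergence can never hold. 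If instead $\chi\neq\chi^\star$, then neither of $\langle\chi,Y\rangle$ and $\langle\chi^\star,Y\rangle$ dominates the other uniformly on $\overline{\aaa}^+$, and the first singular value gap of the direct sum is not controlled by $\theta$ at all; your claim that $\chi^\star$ is ``strictly dominated by $\chi$'' is false in general. The correct repair -- and what the cited reference does -- is to stay on $W$ itself: after arranging $\chi^\star=\chi$, the irreducible representation $\tau_1$ is self-dual, hence by Schur's lemma preserves a nondegenerate bilinear form on $W$, which is either symmetric or antisymmetric; in the symmetric case you are done, and in the antisymmetric case one replaces $\tau_1$ by a further irreducible representation with the same support, still one-dimensional highest weight space, and a \emph{symmetric} invariant form (e.g.\ the Cartan component of $S^2W$, of highest weight $2\chi$, on which the induced form is symmetric and nondegenerate). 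With that replacement your composition of equivalences goes through.
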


There are infinitely many such triples $(p,q,\tau)$, see \cite{GGKW_anosov}.

\begin{lemma} \label{lem:theta-comp-GL-Opq}
  Let \(b\) be a nondegenerate symmetric bilinear form of signature \((p,q)\) on a real vector space \(V\), where $p,q\in\NN^*$ and $(p,q)\neq (1,1)$.
  Let \(1 \leq i \leq \min(p,q)\), with \(i<p\) if \(p=q\).
  Let \(\iota : \OO(b) \hookrightarrow \GL_\RR(V)\) be the natural inclusion.
\begin{asparaenum}
\item For any discrete group \(\Gamma\) and any representation \(\rho: \Gamma \to \OO(b)\), 
 \begin{center}
  \(\rho:\Gamma\to\OO(b)\) is \(P_i(b)\)-divergent
  \(\Longleftrightarrow\) \(\tau \circ \rho:\Gamma\to \GL_{\RR}(V)\) is \(P_i\)-divergent.
  \end{center}
  \item \label{item:theta-comp-GL-Opq-Ano} For any word hyperbolic group \(\Gamma\) and any representation \(\rho: \Gamma \to \OO(b)\),
 \begin{center}
  \(\rho:\Gamma\to\OO(b)\) is \(P_i(b)\)-Anosov
  \(\Longleftrightarrow\) \(\tau \circ \rho:\Gamma\to \GL_{\RR}(V)\) is \(P_i\)-Anosov.
  \end{center}  
\end{asparaenum}
\end{lemma}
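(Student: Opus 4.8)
The plan is to reduce Lemma~\ref{lem:theta-comp-GL-Opq} to Proposition~\ref{prop:theta-comp-Anosov} by exhibiting the tautological inclusion $\iota:\OO(b)\hookrightarrow\GL_\RR(V)$ as a $\{\alpha_i(b)\}$-compatible representation of $\OO(b)$ (up to replacing $V$ by its $i$-th exterior power when $i>1$), and then quoting the divergence and Anosov criteria from that proposition verbatim. The first step is to compute the highest weight of the relevant $\OO(b)$-representation with respect to the restricted root system of $\OO(b)$ described in Section~\ref{sec:orthogonal-groups}. For $i=1$ the standard representation $(\iota,V)$ of $\OO(b)$ has highest weight $\varepsilon_1$, whose weight space $V^{\varepsilon_1}=\RR e_1$ is one-dimensional, and one checks directly from the root data that $\{\alpha\in\Delta\mid(\varepsilon_1,\alpha)>0\}=\{\alpha_1\}$, so $(\iota,V)$ is $\{\alpha_1\}$-compatible; here $P_1$ in $\GL_\RR(V)$ is exactly the stabilizer of a line, matching the description of $\mathcal{F}_1(b)\subset\PP(V)$. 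For general $i\le\min(p,q)$ (with $i<p$ when $p=q$), I would instead apply Proposition~\ref{prop:theta-comp-Anosov} to the representation $\tau=\Lambda^i\iota:\OO(b)\to\GL_\RR(\Lambda^i V)$, whose highest weight is $\varepsilon_1+\cdots+\varepsilon_i$ with one-dimensional highest weight space $\RR(e_1\wedge\cdots\wedge e_i)$; again the root computation gives $\{\alpha\in\Delta\mid(\varepsilon_1+\cdots+\varepsilon_i,\alpha)>0\}=\{\alpha_i(b)\}$ (respectively $\{\alpha_{p-1},\alpha_p\}$ in the $D_p$-case $i=p-1$), exactly matching the definition of $P_i(b)$ as the stabilizer of an isotropic $i$-plane.

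Once $\theta$-compatibility is established, Proposition~\ref{prop:theta-comp-Anosov}(a) gives the equivalence of $P_i(b)$-divergence of $\rho$ with $P_1$-divergence of $\tau\circ\rho$ in $\GL_\RR(\Lambda^i V)$, and part~(b) gives the corresponding Anosov equivalence. To pass from statements about $\tau\circ\rho=\Lambda^i\iota\circ\rho$ in $\GL_\RR(\Lambda^i V)$ to statements about $\iota\circ\rho$ in $\GL_\RR(V)$, I would use the key formula highlighted in the proof of Proposition~\ref{prop:theta-comp-Anosov}: for $g\in\GL_\RR(V)$ one has $\langle\alpha_i,\mu_{\GL_\RR(V)}(g)\rangle=\langle\alpha_1,\mu_{\GL_\RR(\Lambda^i V)}(\Lambda^i g)\rangle$ (this is just the statement that the $i$-th singular value gap of $g$ equals the top singular value gap of $\Lambda^i g$, since the singular values of $\Lambda^i g$ are the products of $i$-tuples of singular values of $g$). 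Hence $P_1$-divergence of $\Lambda^i\iota\circ\rho$ is literally the same condition as $P_i$-divergence of $\iota\circ\rho$, which settles part~(a). For part~(b), $P_i$-Anosov-ness of $\iota\circ\rho$ is by definition equivalent ($i=1$ case of Proposition~\ref{prop:theta-comp-Anosov}, or directly by the Plücker embedding $\mathrm{Gr}_i(V)\hookrightarrow\PP(\Lambda^i V)$ which intertwines the transversality and dynamics-preserving conditions) to $P_1$-Anosov-ness of $\Lambda^i\iota\circ\rho$, and we are done.

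The main obstacle, such as it is, is bookkeeping rather than a genuine difficulty: one must be careful with the two conventions for parabolics when $p=q$ (where $P_i(b)$ equals $P_{\{\alpha_i\}}$ for $i<p-1$ but $P_{p-1}(b)=P_{\{\alpha_{p-1},\alpha_p\}}$), and correspondingly the representation $\Lambda^{p-1}V$ is not irreducible for $\OO(b)$ but splits according to the two families of isotropic $(p-1)$-planes — so strictly one should work with the $\OO(b)$-irreducible constituent, or simply note that the extra simple root in $\theta$ is handled by Remark~\ref{rem:theta*Ano}.\eqref{item:xi+=xi-} together with the $\theta=\theta^\star$ symmetry, since the opposition involution of $\OO(b)$ fixes $\alpha_i$ for all $i$ in question. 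I would also double-check that the hypothesis $(p,q)\neq(1,1)$ and $i\le\min(p,q)$ (with $i<p$ if $p=q$) is exactly what guarantees $\alpha_i(b)\in\Delta$ and hence that $P_i(b)$ is a genuine proper parabolic, so that Proposition~\ref{prop:theta-comp-Anosov} applies with a nonempty $\theta$.
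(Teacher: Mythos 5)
Your proof is correct and follows essentially the same route as the paper: both reduce to Proposition~\ref{prop:theta-comp-Anosov} applied twice, once to the $\theta$-compatible representation $\bigwedge^i\iota$ of $\OO(b)$ and once to the representation $\GL_\RR(V)\to\GL_\RR(\bigwedge^i V)$, with the one-dimensionality of the highest weight space $\RR(e_1\wedge\cdots\wedge e_i)$ doing the work on both sides. The only superfluous point is your caveat about $\bigwedge^{p-1}V$ failing to be $\OO(b)$-irreducible when $p=q$: since $p-1<\tfrac{1}{2}\dim_\RR V$ this exterior power is irreducible (it is the isotropic $p$-planes, not the $(p-1)$-planes, that split into two families), and its highest weight $\varepsilon_1+\cdots+\varepsilon_{p-1}$ is $\{\alpha_{p-1},\alpha_p\}$-compatible, which is exactly the set $\theta$ with $P_\theta=P_{p-1}(b)$ in the $D_p$ case.
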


\begin{proof}
  The action of \(\OO(b)\) on the exterior product \(\bigwedge^i V\) is irreducible and \(\alpha_i\)-compatible, and the highest weight space has dimension~\(1\).
  By Proposition~\ref{prop:theta-comp-Anosov}, the representation \(\rho\) is \(P_i(b)\)-divergent (\resp \(P_i(b)\)-Anosov) if and only if \(\bigwedge^i \rho: \Gamma \to \GL_\RR( \bigwedge^i V)\) is \(P_1\)-divergent (\resp \(P_1\)-Anosov).
The same proposition, applied to the linear representation \(\GL_\RR(V)\to \GL_\RR( \bigwedge^i V)\), implies that \(\iota \circ \rho\) is \(P_i\)-divergent (\resp \(P_i\)-Anosov) if and only if \(\bigwedge^i \rho: \Gamma \to \GL_\RR( \bigwedge^i V)\) is \(P_1\)-divergent (\resp \(P_1\)-Anosov).
The lemma follows.
\end{proof}

\subsection{The adjoint representation} \label{sec:adjoint}

For a noncompact semisimple Lie group~$G$, recall that the Killing form $\kl$ of the Lie algebra~$\g$ is a nondegenerate indefinite symmetric bilinear form on~$\g$.
Let $\Ad: G \to \OO(\kl)\subset\GL_{\RR}(\g)$ be the adjoint representation.
The highest restricted weight $\chi_G\in\Sigma^+$ of $\Ad$ is called the \emph{highest restricted root}.
In the case that $G$ is simple, we prove the following.

\begin{proposition}\label{prop:G_makeP1}
Let $G$ be a real simple Lie group.

\begin{asparaenum}
  \item \label{item:alphaG} There exists a simple restricted root \(\alpha_G \in \Delta\) such that $\Ad : G\to\GL_{\RR}(\g)$ is $\{ \alpha_G, \alpha_G^{\star}\}$-compatible (Definition~\ref{defi:theta-compatible}), \ie
  \[ \{ \alpha \in \Delta \mid ( \chi_G, \alpha) > 0\} = \{ \alpha_G, \alpha_G^{\star}\} \]
where $\chi_G\in\Sigma^+$ is the highest restricted root.
Moreover, \(\alpha_G = \alpha_G^\star\) unless the restricted root system $\Sigma$ is of type \(A_n\).

Let \(d\) be the real dimension of the root space \(\mathfrak{g}_{\chi_G}\).
  \item \label{item:div-alphaG} For any discrete group \(\Gamma\) and any representation \(\rho: \Gamma \to G\),
 \begin{center}
  \(\rho:\Gamma\to G\) is \(P_{\{\alpha_G\}}\)-divergent
  \(\Longleftrightarrow\) \(\Ad\circ \rho:\Gamma\to \OO(\kl)\) is \(P_d(\kl)\)-divergent.
  \end{center}
  \item \label{item:Ano-alphaG} For any word hyperbolic group \(\Gamma\) and any representation \(\rho: \Gamma \to G\),
 \begin{center}
  \(\rho:\Gamma\to G\) is \(P_{\{\alpha_G\}}\)-Anosov
  \(\Longleftrightarrow\) \(\Ad\circ \rho:\Gamma\to \OO(\kl)\) is \(P_d(\kl)\)-Anosov.
  \end{center}
  In this case the boundary map \(\xi: \partial_\infty \Gamma\to G/ P_{\{ \alpha_G, \alpha_{G}^{\star}\}} \) of~$\rho$ and the boundary map \(\xi_d: \partial_\infty \Gamma \to \mathcal{F}_d(\kl)\) of \(\Ad \circ \rho\) are related as follows: for any \(\eta\in\partial_\infty \Gamma\), if \(\xi(\eta)= g \cdot P_{\{ \alpha_G, \alpha_{G}^{\star}\}}\) where $g\in G$, then \(\xi_d(\eta) = \Ad(g)\cdot \mathfrak{g}_{\chi_G}\).
\end{asparaenum}
\end{proposition}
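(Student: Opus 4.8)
The plan is to apply the general machinery of Proposition~\ref{prop:theta-comp-Anosov} with the adjoint representation $(\Ad,\g)$ playing the role of $(\tau,V)$, $\kappa$ playing the role of the $\OO(b)$-form it preserves, and $\{\alpha_G,\alpha_G^\star\}$ playing the role of~$\theta$. The content of part~\eqref{item:alphaG} is purely structural and should be settled first; parts \eqref{item:div-alphaG} and~\eqref{item:Ano-alphaG} then follow almost formally from part~\eqref{item:alphaG} combined with Proposition~\ref{prop:theta-comp-Anosov} and Lemma~\ref{lem:theta-comp-GL-Opq}.

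For part~\eqref{item:alphaG}: the highest restricted root $\chi_G$ is the highest weight of the (irreducible, since $G$ is simple) adjoint representation, so by Definition~\ref{defi:theta-compatible} I must identify the set $\theta_G:=\{\alpha\in\Delta\mid(\chi_G,\alpha)>0\}$. Writing $\chi_G=\sum_{\alpha\in\Delta}c_\alpha\alpha$ with all $c_\alpha>0$ (as $\chi_G$ is the highest root), and using $(\alpha,\beta)\le 0$ for distinct simple roots $\alpha,\beta$, one sees that $(\chi_G,\alpha)\ge 0$ for every simple root, with equality precisely when $\alpha$ is not connected (in the Dynkin diagram sense) to the support issues forced by $\chi_G$ being dominant and the affine diagram node. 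The standard fact — which I would extract from the classification / Bourbaki's tables of highest roots, or prove uniformly via the theory of the affine Dynkin diagram, where the added node attaches to exactly one or two vertices — is that $\theta_G$ consists of the one or two simple roots adjacent to the affine vertex, and this set is a single orbit under the opposition involution; it is a single root $\alpha_G=\alpha_G^\star$ in all types except $A_n$ (where the affine node attaches to both $\alpha_1$ and $\alpha_n$, which are swapped by $\star$). Since $\theta_G$ is $\star$-invariant and $\Ad$ takes values in $\OO(\kappa)$, we may write $\theta_G=\{\alpha_G,\alpha_G^\star\}$; this is the heart of the argument and the step I expect to be the main obstacle, because a clean type-independent proof requires care with the affine Dynkin diagram, whereas a case-by-case check is tedious.

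Granting part~\eqref{item:alphaG}, the highest weight space $\g_{\chi_G}$ of $\Ad$ has dimension $d$ by definition, and the highest weight of $\Ad$ being $\chi_G$ with weight space $\g_{\chi_G}$, Proposition~\ref{prop:theta-comp-Anosov} applied to $(\tau,V)=(\Ad,\g)$, $\theta=\{\alpha_G,\alpha_G^\star\}$, $i=d$, $n=\dim\g$, gives directly: $\rho$ is $P_{\{\alpha_G,\alpha_G^\star\}}$-divergent (resp.\ -Anosov) iff $\Ad\circ\rho$ is $P_d$-divergent (resp.\ -Anosov) as a representation into $\GL_\RR(\g)$. By Remarks~\ref{rem:div}.\eqref{item:div-star} and~\ref{rem:theta*Ano}.\eqref{item:star}, $P_{\{\alpha_G,\alpha_G^\star\}}$-divergence (resp.\ Anosov) is the same as $P_{\{\alpha_G\}}$-divergence (resp.\ Anosov). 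It remains to pass from $P_d$ in $\GL_\RR(\g)$ to $P_d(\kappa)$ in $\OO(\kappa)$: this is exactly Lemma~\ref{lem:theta-comp-GL-Opq} applied to the form $\kappa$ (of some signature $(p,q)$ with $d\le\min(p,q)$, and $d<p$ when $p=q$, which holds because $\chi_G$ is fixed by $\star$ when the root system is not of type $A$, forcing $d$ to stay in the allowed range; in type $A_n$ one has $d=1$). Chaining these three equivalences proves \eqref{item:div-alphaG} and the equivalence in~\eqref{item:Ano-alphaG}.

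Finally, for the description of the boundary maps in~\eqref{item:Ano-alphaG}: the "in this case" clause of Proposition~\ref{prop:theta-comp-Anosov} states that if $\xi(\eta)=(gP_\theta,gP_\theta^-)$ — equivalently, since $\theta=\theta^\star$, a single flag $gP_{\{\alpha_G,\alpha_G^\star\}}$ — then the boundary map of $\Ad\circ\rho$ into $\mathrm{Gr}_d(\g)$ sends $\eta$ to $\Ad(g)\,V^{\chi_\tau}=\Ad(g)\,\g_{\chi_G}$. Under the identification of $\mathcal{F}_d(\kappa)$ with the $d$-dimensional $\kappa$-isotropic subspaces inside $\mathrm{Gr}_d(\g)$ used in Lemma~\ref{lem:theta-comp-GL-Opq} (note $\g_{\chi_G}$ is totally $\kappa$-isotropic, as $\kappa(\g_\alpha,\g_\beta)=0$ unless $\alpha+\beta=0$), this is precisely $\xi_d(\eta)=\Ad(g)\cdot\g_{\chi_G}$, which is the asserted relation. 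This completes the plan.
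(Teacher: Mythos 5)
Your proposal is correct and follows essentially the same route as the paper: part~\eqref{item:alphaG} via the extended Dynkin diagram classification together with the self-duality of $\Ad$, and parts \eqref{item:div-alphaG}--\eqref{item:Ano-alphaG} by chaining Proposition~\ref{prop:theta-comp-Anosov} (applied to $(\Ad,\g)$ with $i=d$) with Lemma~\ref{lem:theta-comp-GL-Opq} for the form~$\kl$. You are in fact slightly more explicit than the paper in spelling out the passage from $P_{\{\alpha_G,\alpha_G^\star\}}$ to $P_{\{\alpha_G\}}$ via Remarks \ref{rem:div} and~\ref{rem:theta*Ano}, and in checking that $\g_{\chi_G}$ is $\kl$-isotropic.
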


Table~\ref{table3} gives the simple root \(\alpha_G\) and the highest weight \(\chi_G\) for the various restricted root systems, see \cite[Ch.\,X, Th.\,3.28]{Helgason}.

\begin{table}[h!]
\centering
\begin{tabular}{|c|c|c|}
\hline
Type & $\alpha_G$ & $\chi_G$ \tabularnewline
\hline
$A_n$ & $\alpha_1 $ & $\varepsilon_1 - \varepsilon_{n+1} = \alpha_1 + \dots + \alpha_n$\tabularnewline
$B_n$ & $\alpha_2 $& $\varepsilon_1 + \varepsilon_2 = \alpha_1 + 2 \alpha_2 + \dots + 2 \alpha_n$\tabularnewline
$C_n$ &$\alpha_1$  &$2 \varepsilon_1 = 2\alpha_1 + \dots + 2 \alpha_{n-1} + \alpha_n$ \tabularnewline
$BC_n$ & $\alpha_1$ & $2\varepsilon_1 = 2 \alpha_1 + \dots + 2 \alpha_n$ \tabularnewline
$D_n$ & $\alpha_2$ & $\varepsilon_1 + \varepsilon_2 = \alpha_1 + 2 \alpha_2 + \dots + 2 \alpha_{n-2} + \alpha_{n-1} + \alpha_n$ \tabularnewline
$E_6$ & $\alpha_4$& $\alpha_1 + 2 \alpha_2 + 2 \alpha_3 + 3 \alpha_4 + 2 \alpha_5 + \alpha_6$ \tabularnewline
$E_7$ & $\alpha_6$  & $2 \alpha_1 + 2 \alpha_2 + 3 \alpha_3 + 4 \alpha_4 + 3 \alpha_5 + 2\alpha_6 + \alpha_7$\tabularnewline
$E_8$ & $\alpha_7$ & $ 2\alpha_1 + 3 \alpha_2 +  4 \alpha_3 + 6 \alpha_4 + 5 \alpha_5 + 4 \alpha_6 + 3 \alpha_7 + 2 \alpha_8$\tabularnewline
$F_4$ & $\alpha_1$ & $ 2\alpha_1 + 3 \alpha_2 + 4\alpha_3 + 2 \alpha_4 $\tabularnewline
$G_2$ & $\alpha_1$ & $3 \alpha_1 + 2 \alpha_2 $ \tabularnewline
\hline
\end{tabular}
\vspace{0.2cm}
\caption{The simple root $\alpha_G$ and the highest root $\chi_G$ according to the Dynkin diagram of the restricted root system}
\label{table3}
\end{table}

\begin{proof}
The set \( \{ \alpha \in \Delta \mid ( \chi_G, \alpha) > 0\} \) is the set of simple roots connected to the added node in the extended Dynkin diagram.
The result is thus a consequence of the classification of those diagrams, see \eg \cite[Ch.\,VI, \S\,4, no.\,3]{Bourbaki}. 
Since \(\mathrm{Ad}\) is selfdual, \(\chi_{G}^{\star} = \chi_G\) and \(\langle \alpha, \chi_G\rangle\neq 0\) if and only if \(\langle \alpha^\star, \chi_G\rangle\neq 0\).
This proves \eqref{item:alphaG}. 

Let $\Gamma$ be a discrete (\resp word hyperbolic) group and $\rho : \Gamma\to G$ a representation.
Proposition~\ref{prop:theta-comp-Anosov} implies that \(\rho\) is \(P_{\{\alpha_G\}}\)-divergent (\resp \(P_{\{\alpha_G\}}\)-Anosov) if and only if \(\Ad \circ \rho: \Gamma \to \GL_\RR(\g)\) is \(P_d\)-divergent (\resp \(P_d\)-Anosov).
On the other hand, Lemma~\ref{lem:theta-comp-GL-Opq} implies that \(\Ad\circ \rho: \Gamma\to \OO(\kl)\) is \(P_{d}(\kl)\)-divergent (\resp \(P_{d}(\kl)\)-Anosov) if and only if \(\Ad \circ \rho: \Gamma \to \GL_\RR(\g)\) is \(P_d\)-divergent (\resp \(P_d\)-Anosov).
This proves \eqref{item:div-alphaG} and~\eqref{item:Ano-alphaG}.
\end{proof}

\section{Compactifying Riemannian locally symmetric spaces: the case of orthogonal groups}
\label{sec:comp-anos-locally}

In this section we construct a compactification for Riemannian locally symmetric spaces arising from \(P_1(b)\)-Anosov representations into $\OO(b)$.

For a nondegenerate symmetric bilinear form \(b\) of signature $(p,q)$ on a real vector space \(V\), the Riemannian symmetric space $X_{b}$ of $\OO(b)$ admits a realization as an open subset in the Grassmannian $\mathrm{Gr}_q(V)$, namely as the set of $W\in \mathrm{Gr}_q(V)$ such that the restriction of $b$ to $W\times W$ is negative definite.
Its closure 
\begin{equation} \label{eqn:overXb}
\overX_{b} = \{ W\in \mathrm{Gr}_q(V) \mid b(x,x)\leq 0\quad \forall x\in W\}
\end{equation}
in $\mathrm{Gr}_q(V)$ is a compactification of~$X_{b}$.
This compactification is isomorphic to a minimal Satake compactification of $X_{b}$ if $p>q$ (see Section~\ref{sec:compactification_Xpq}), and a generalized Satake compactification if $p=q$ (see Section~\ref{sec:generalizedSatake}).
The main result that we prove in this section is the following.

\begin{theorem}\label{thm:comp_loc_sym_spc_opq}
Let \(b\) be a nondegenerate symmetric bilinear form of signature \((p,q)\) on a real vector space~\(V\), where $p,q\in\NN^*$ and \((p,q)\neq (1,1), (2,2)\).
Let $\Gamma$ be a word hyperbolic group and \(\rho:\Gamma\to \OO(b)\) a \(P_1(b)\)-Anosov representation with boundary map \(\xi : \partial_\infty \Gamma\to\mathcal{F}_1(b)\).
  Let
  \[ \mathcal{N}_{\rho} = \bigcup_{\eta \in \partial_\infty \Gamma} \{ W\in\overX_{b} \mid \xi(\eta) \subset W\}. \]
  Then the action of \(\Gamma\) via~$\rho$ on $\Omega = \overX_{b} \smallsetminus \mathcal{N}_{\rho}$ is properly discontinuous and cocompact.
  The set \(\Omega\) contains the Riemannian symmetric space \(X_{b}\) and \(\rho(\Gamma)\backslash \Omega\) is a compactification of \(\rho(\Gamma)\backslash X_{b}\).
\end{theorem}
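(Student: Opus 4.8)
The plan is to split the statement into three parts: proper discontinuity of the $\Gamma$-action on $\Omega = \overX_b \smallsetminus \mathcal{N}_\rho$, cocompactness of that action, and the identification of $\rho(\Gamma)\backslash\Omega$ as a compactification of $\rho(\Gamma)\backslash X_b$. The inclusion $X_b \subset \Omega$ is immediate: if $W \in X_b$ then $b|_{W\times W}$ is negative definite, so $W$ cannot contain an isotropic line $\xi(\eta)$, hence $W \notin \mathcal{N}_\rho$. Since $\rho$ is $P_1(b)$-Anosov, it is $P_1(b)$-divergent with finite kernel and discrete image, so $\rho(\Gamma)$ acts properly discontinuously on $X_b$ already; the content is to extend this to the larger set $\Omega$ and to get cocompactness on $\Omega$.

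For \textbf{proper discontinuity}, I would argue via the dynamics of $P_1(b)$-divergent sequences on the compactification $\overX_b \subset \mathrm{Gr}_q(V)$, in the spirit of a Cartan-projection/contraction argument. Writing $\rho(\gamma_n) = k_n \exp(\mu(\rho(\gamma_n))) \ell_n$ and using $P_1(b)$-divergence, $\langle \alpha_1, \mu(\rho(\gamma_n))\rangle \to +\infty$; after passing to a subsequence, $\Xi_1(\rho(\gamma_n)) = k_n \cdot P_1(b) \to \xi^+ \in \mathcal{L}^{\mathcal{F}_1(b)}_{\rho(\Gamma)} = \xi(\partial_\infty\Gamma)$ (Lemma~\ref{lem:anosov-lim-set}) and $\Xi_1(\rho(\gamma_n)^{-1}) \to \xi^- \in \xi(\partial_\infty\Gamma)$ as well, with $\xi^+, \xi^-$ transverse isotropic lines, i.e. $\xi^- \not\subset (\xi^+)^{\perp_b}$. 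The key dynamical lemma I need is: on $\overX_b$, the sequence $\rho(\gamma_n)$ acts, away from the ``bad'' set $\{W : \xi^- \subset W\} \cup \{W : W \not\subset (\xi^-)^{\perp_b} \text{ fails}\}$, by pushing $W$ toward $\{W : \xi^+ \subset W\}$, uniformly on compact sets avoiding the bad locus. Concretely, if $W_n \to W_\infty$ with $\xi^- \not\subset W_\infty$ — which, because $\xi^-$ is an isotropic line and $W_\infty$ is $q$-dimensional with $b|_{W_\infty} \le 0$, should be equivalent to $W_\infty$ lying in the good region for the contraction — then $\rho(\gamma_n) W_n$ converges to an element of $\mathcal{N}_\rho$. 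Hence if $C \subset \Omega$ is compact and $\rho(\gamma_n) C \cap C \ne \emptyset$ for infinitely many distinct $\gamma_n$, we extract convergent sequences and force a limit point in $\mathcal{N}_\rho \cap C = \emptyset$, a contradiction. The step that requires care — and which I expect to be \textbf{the main obstacle} — is establishing this uniform contraction/attraction statement on $\overX_b$ precisely: one must control how a $q$-plane $W$ with $b|_W \le 0$ decomposes with respect to the pair $(\xi^+, \xi^-)$, show that $\xi^- \not\subset W$ together with $W \in \overX_b$ already puts $W$ in the basin of attraction (using signature constraints: an isotropic line not contained in $W$ meets $W^{\perp_b}$ transversally in the relevant sense), and make the convergence uniform over compact families of $(W, \gamma_n)$. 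The exclusion of $(p,q) = (2,2)$ (and the standing $(p,q)\neq(1,1)$) is presumably exactly what is needed to rule out degenerate signature configurations where this dichotomy fails.

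For \textbf{cocompactness}, I would use that $\partial_\infty\Gamma$ is compact and $\xi$ continuous, so $\mathcal{N}_\rho$ is closed (it is the image under a proper map of the closed incidence variety $\{(\eta, W) : \xi(\eta) \subset W\} \subset \partial_\infty\Gamma \times \overX_b$), hence $\Omega$ is open in $\overX_b$ and locally compact. Cocompactness should follow the standard Anosov pattern: cover $\Omega$ by the $\rho(\Gamma)$-translates of a suitable compact ``fundamental'' region. One natural approach is to use a continuous map $\Omega \to$ (something like) the space of triples built from $(\xi^+, \xi^-)$-type data on which $\Gamma$ acts cocompactly — e.g. relate $\Omega$ to $\widetilde{\mathcal{G}}(\partial_\infty\Gamma)$-type geodesic-flow models as in the Anosov literature — or, more concretely, to show directly that $\rho(\Gamma)\backslash\Omega$ is Hausdorff and that every point of $\Omega$ can be pushed by $\rho(\Gamma)$ into a fixed compact set. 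Since $X_b \subset \Omega$ and $X_b$ is open dense (its closure in $\overX_b$ is all of $\overX_b \supset \Omega$), once we know $\rho(\Gamma)\backslash\Omega$ is a compact Hausdorff space containing $\rho(\Gamma)\backslash X_b$ as a dense open subset we are done. I would first prove Hausdorffness of the quotient (which is essentially a restatement of proper discontinuity plus the fact that $\Omega$ is second-countable and locally compact), then prove compactness by a covering argument using the attraction dynamics above: the limit set $\xi(\partial_\infty\Gamma)$ is compact, and for each $W \in \overX_b$ either $W \in \mathcal{N}_\rho$ or there is some $\gamma$ with $\rho(\gamma)W$ in a prescribed neighborhood of a basepoint of $X_b$; an equicontinuity/compactness argument over $\overX_b$ then yields a finite-union cover of $\Omega$ by translates of a single compact set. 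Finally, the density of $\rho(\Gamma)\backslash X_b$ in $\rho(\Gamma)\backslash\Omega$ follows from the density of $X_b$ in $\overX_b$ intersected with $\Omega$, which is automatic since $X_b$ is the interior of $\overX_b$ in $\mathrm{Gr}_q(V)$ and $\Omega$ is open. This last part is routine; the weight of the proof is entirely in the dynamical contraction lemma on $\overX_b$.
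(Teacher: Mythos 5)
Your treatment of proper discontinuity follows the same route as the paper: the dynamical-relation criterion, the Cartan decomposition $\rho(\gamma_n)=k_n a_n\ell_n$, the identification of attracting/repelling limits $L^{\pm}$ in the limit set $\xi(\partial_\infty\Gamma)$, and the reduction of everything to incidence properties of isotropic lines against nonpositive $q$-planes: for $W\in\overX_b$ and an isotropic line $L$ one has $L\cap W\neq\{0\}$ iff $W+L^{\perp_b}\neq V$ iff $L\subset W$ (Lemma~\ref{lem:incid_iso_neg}), which is what lets you pass from ``$L^-\not\subset W_\infty$'' to the existence of a vector of $W_\infty$ with a nonzero component in the attracting directions, after which a rescaling of $\rho(\gamma_n)\cdot w_n$ produces a nonzero limit vector in $L^+\cap W'$. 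The step you flag as ``the main obstacle'' is exactly the content of Lemma~\ref{lem:propernessPk-seq}, and your outline of it is correct. One small correction: the hypothesis $(p,q)\neq(2,2)$ is not needed for properness (Proposition~\ref{prop:propernessPk} only excludes $(1,1)$); it is forced by the cocompactness argument, which for $p=q$ requires $i<p-1$.

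The genuine gap is in cocompactness. The mechanism you propose --- ``for each $W\in\overX_b$ either $W\in\mathcal{N}_\rho$ or there is some $\gamma$ with $\rho(\gamma)W$ in a prescribed neighborhood of a basepoint of $X_b$'' --- cannot work: the boundary $\overX_b\smallsetminus X_b$ is $\OO(b)$-invariant, so a boundary point of $\Omega$ can never be moved into a neighborhood of a point of $X_b$. Any compact set $C$ with $\Gamma\cdot C=\Omega$ must itself contain boundary points, and the whole difficulty is to control orbits of points of $\Omega$ lying arbitrarily close to $\mathcal{N}_\rho$. The tool the paper uses, and which is absent from your sketch, is an \emph{expansion} property: $\mathcal{N}_\rho$ fibers equivariantly over $\partial_\infty\Gamma$, and near each fiber some $\rho(\gamma)$ uniformly expands distances to nearby fibers by a factor $c>1$ (Proposition~\ref{prop:P1-expanding}, resting on quantitative Cartan-projection estimates along quasi-geodesic rays from \cite{GGKW_anosov}). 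The Sullivan--Kapovich--Leeb--Porti criterion (Lemma~\ref{lem:compactness-metric}) then converts expansion at the fibered bad set into cocompactness of the action on its complement. Your appeal to ``equicontinuity/compactness'' and to geodesic-flow models does not substitute for this: without an expansion statement (or an equivalent device), the covering argument does not close up. The routine parts of your write-up (openness of $\Omega$, the inclusion $X_b\subset\Omega$, density of $\rho(\Gamma)\backslash X_b$ in the quotient) are fine.
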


Properness will be proved in Section~\ref{sec:proof-prop-theor} and cocompactness in Section~\ref{sec:proof-cocomp-theor-1}.

\subsection{Nonpositive quadratic spaces}\label{sec:properties} 

We establish the following elementary lemma, used later in the proof of Theorem~\ref{thm:comp_loc_sym_spc_opq}.
We denote by
\[ \Ker(b) = \{ y \in V \mid b(x,y)=0\ \ \forall x \in V\} \]
the kernel of a symmetric bilinear form $b$ on a real vector space~$V$.

\begin{lemma} \label{lem:incid_iso_neg}
  Let \(b\) be a nondegenerate symmetric bilinear form of signature \((p,q)\) on a real vector space \(V\). Let \(W\in \mathrm{Gr}_q( V)\) satisfy $b(x,x)\leq 0$ for all $x\in W$.
  \begin{asparaenum}
  \item\label{item:nullW} If \(y\in W\) satisfies \(b(y,y)=0\), then \(y\in \Ker(b|_{W\times W})\).
    Conversely, if \(y\in V\) satisfies \(b(y,y)=0\) and \(b(x,y)=0\) for all \(x\in W\), then \(y\in W\).
  \item\label{item:incid} For any \(b\)-isotropic subspace $L$ of~$V$,
    \begin{center}
      \(L \cap W \neq \{0\}\) \(\Longleftrightarrow\) \(W + L^{\perp_{b}} \neq V\), and\\
      \(L \subset W \) \(\Longleftrightarrow\) \(W \subset L^{\perp_{b}} \).
          \end{center}
  \end{asparaenum}
\end{lemma}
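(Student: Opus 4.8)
The plan is to prove the two biconditionals in Lemma~\ref{lem:incid_iso_neg} by exploiting the structure of $W$ as a maximal nonpositive subspace. Since $b$ has signature $(p,q)$ with $q=\dim W$, and $b|_{W\times W}$ is nonpositive, the subspace $W$ is a maximal nonpositive subspace; hence $b|_{W\times W}$ has exactly one of the following shapes: either it is negative definite, or its radical $\Ker(b|_{W\times W})$ is nontrivial and $b$ is negative definite on a complement. The key structural fact I would establish first is that $N := \Ker(b|_{W\times W}) = W \cap W^{\perp_b}$ is a $b$-isotropic subspace of $V$ (every vector in it is $b$-isotropic, being in the radical of a nonpositive form, by~\eqref{item:nullW}), and that $W^{\perp_b}$ has dimension $p$.

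For part~\eqref{item:nullW}: the first implication is immediate since $y\mapsto b(y,y)$ restricted to the nonpositive space $W$ vanishes at $y$ iff $y$ lies in the radical (a standard fact about nonpositive semidefinite forms: if $b(y,y)=0$ and $b(x,x)\le 0$ for all $x\in W$, then for any $x\in W$ and $t\in\RR$, $0\ge b(x+ty,x+ty)=b(x,x)+2tb(x,y)$, forcing $b(x,y)=0$). For the converse, suppose $y\in V$ satisfies $b(y,y)=0$ and $b(x,y)=0$ for all $x\in W$, i.e. $y\in W^{\perp_b}$, and suppose for contradiction $y\notin W$. Then $W\oplus\RR y$ is a $(q+1)$-dimensional subspace on which $b$ is nonpositive (since $b(x+ty,x+ty)=b(x,x)\le 0$), contradicting that the maximal dimension of a nonpositive subspace is $q$. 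Hence $y\in W$.

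For part~\eqref{item:incid}, let $L$ be a $b$-isotropic subspace. The equivalence $L\cap W\ne\{0\}\iff W+L^{\perp_b}\ne V$ I would prove by a dimension count: since $b$ is nondegenerate, $\dim L^{\perp_b}=\dim V-\dim L$, and $(W+L^{\perp_b})^{\perp_b}=W^{\perp_b}\cap L$ (using $(A+B)^{\perp_b}=A^{\perp_b}\cap B^{\perp_b}$ and $L^{\perp_b\perp_b}=L$). So $W+L^{\perp_b}\ne V$ iff $W^{\perp_b}\cap L\ne\{0\}$. It then remains to see $W^{\perp_b}\cap L\ne\{0\}\iff W\cap L\ne\{0\}$: if $0\ne z\in W\cap L$ then $z$ is $b$-isotropic (as $L$ is isotropic) and $b(x,z)=0$ for all $x\in W$ (by~\eqref{item:nullW}, since $b(z,z)=0$ and $z\in W$), so $z\in W^{\perp_b}\cap L$; conversely if $0\ne z\in W^{\perp_b}\cap L$, then $b(z,z)=0$ (again $L$ isotropic) and $b(x,z)=0$ for all $x\in W$, so the converse of~\eqref{item:nullW} gives $z\in W$, hence $z\in W\cap L$. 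For the second equivalence, $L\subset W\iff W^{\perp_b}\subset L^{\perp_b}$ is one direction by applying $\perp_b$; for the reverse, if $W\not\subset L^{\perp_b}$ we need $L\not\subset W$, which follows by applying $\perp_b$ to the contrapositive of the first direction, using $L^{\perp_b\perp_b}=L$ and $W^{\perp_b\perp_b}=W$. Actually, more directly: $L\subset W\iff W^{\perp_b}\subset L^{\perp_b}\iff W^{\perp_b}+L$ \ldots; the cleanest route is to note that both conditions are equivalent, for $L$ isotropic inside the nonpositive $W$, to $L\subset W\cap W^{\perp_b}=N$, and then apply $\perp_b$.

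The main obstacle I anticipate is not any single deep step but rather keeping the signature/maximality bookkeeping correct: the crucial input throughout is that $\dim W=q$ equals the maximal dimension of a $b$-nonpositive subspace, which is what lets one upgrade ``$b(x,y)=0$ for all $x\in W$ and $b(y,y)=0$'' to ``$y\in W$''. Once that maximality lemma is in hand, everything else is linear algebra with orthogonal complements and the identities $(A+B)^{\perp_b}=A^{\perp_b}\cap B^{\perp_b}$ and $A^{\perp_b\perp_b}=A$ (valid since $b$ is nondegenerate on $V$). I would state the maximality fact explicitly at the start of the proof to streamline both parts.
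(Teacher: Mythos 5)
Your proof is correct and follows essentially the same route as the paper: part~(1) via the expansion of $b(x+ty,x+ty)$ together with a dimension count against the maximal dimension $q$ of a $b$-nonpositive subspace, and part~(2) by orthogonal-complement duality reducing everything to part~(1). The only visible variation is in the converse of~(1), where you adjoin $y$ to $W$ and invoke maximality of nonpositive subspaces directly in $V$, whereas the paper projects $W$ into the quotient $\ell^{\perp_{b}}/\ell$ of signature $(p-1,q-1)$ and bounds the dimension there---the two arguments rest on the same fact and yours is, if anything, the more direct phrasing.
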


\begin{proof}[Proof of Lemma~\ref{lem:incid_iso_neg}]
(1)\ If \(y\in W\) satisfies \(b(y,y)=0\), then $b(x,y)=0$ for all \(x\in W\), otherwise we would have $b(x+ty,x+ty) = b(x,x) + 2t b(x,y) > 0$ for certain values of $t\in\RR$.

Conversely, let $y\in V$ satisfy \(b(y,y)=0\) and \(b(x,y)=0\) for all \(x\in W\).
Let \(\ell = \RR y \subset V\).
The projection \(Z\) of \(W\) to \(\ell^{\perp_{b}}/\ell\) is a nonpositive subspace in a vector space equipped with a nondegenerate symmetric bilinear form of signature \((p-1, q-1)\).
Such a subspace \(Z\) has trivial intersection with any \((p-1)\)-dimensional positive subspace, hence $\dim_{\RR}(Z)\leq q-1$.
Since $\dim_{\RR}(W)=q$, we deduce~\(\ell\subset\nolinebreak W\) and \(y\in W\).

(2) Suppose $L\cap W\neq \{0\}$ and let \(y\) be non zero in \( L \cap W\).
Then $W\subset y^{\perp_{b}}$ by~\eqref{item:nullW}.
As \(L\) is isotropic, \(L^{\perp_{b}} \subset y^{\perp_{b}}\), and one gets \(W+ L^{\perp_{b}} \subset y^{\perp_{b}}\) and \(W + L^{\perp_{b}} \neq V\).

Conversely, suppose \(W+ L^{\perp_{b}}\neq V\). Let \(H\subset V\) be a hyperplane containing \(W\) and \(L^{\perp_{b}}\) and \( y\in V\) such that \(y^{\perp_{b}}=H\).
By duality \(y \in L\) and \(y\) is  isotropic.
By~\eqref{item:nullW} we have \(y\in W\), hence \(y \in W \cap L\) and \(W\cap L \neq \{0\}\). 
The equivalence \(L \subset W \) \(\Leftrightarrow\) \(W \subset L^{\perp_{b}} \) follows from~\eqref{item:nullW} as well.
\end{proof}

\subsection{Proper discontinuity}
\label{sec:proof-prop-theor}

Properness in Theorem~\ref{thm:comp_loc_sym_spc_opq} is an immediate consequence of Lemma~\ref{lem:anosov-lim-set} and of the following proposition with $i=1$.
We refer to Definitions \ref{def:lim-set} and~\ref{defi:theta_divergent-gp} for the notions of limit set and divergent representation, and to Section~\ref{sec:orthogonal-groups} and Example~\ref{ex:Ano-Opq} for the assumptions on~$i$.

\begin{proposition}\label{prop:propernessPk}
Let \(b\) be a nondegenerate symmetric bilinear form of signature \((p,q)\) on a real vector space~\(V\), where $p,q\in\NN^*$ and $(p,q)\neq (1,1)$.
Let  $1\leq i\leq\min(p,q)$; if $p=q$, assume that $i<p$.
Let $\Gamma$ be a discrete group and \(\rho:\Gamma\to \OO(b)\) a \(P_{i}(b)\)-divergent representation.
Let 
  \begin{equation*}
    \mathcal{W}^{i}_{\rho} = \bigcup_{L \in \mathcal{L}^{\mathcal{F}_i(b)}_{ \rho(\Gamma)}} \bigl\{ W\in \overX_{b} \mid L \cap W \neq \{0\}\bigr\},
  \end{equation*}
  where \(\mathcal{L}^{\mathcal{F}_i(b)}_{\rho(\Gamma)} \subset \mathcal{F}_i(b)\) is the limit set of \(\rho(\Gamma)\).
  Then \(\overX_{b} \smallsetminus \mathcal{W}^i_\rho\) contains $X_b$ and the action of \(\Gamma\) on \(\overX_{b} \smallsetminus \mathcal{W}^i_\rho\) is properly discontinuous.
\end{proposition}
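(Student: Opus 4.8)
The plan is to first reduce to the case $i = 1$ (so that the limit set lives in the projective space $\mathcal{F}_1(b)$ of $b$-isotropic lines) and then prove proper discontinuity directly from the dynamical characterization of $P_1(b)$-divergence via Cartan projections. For the reduction, I would invoke Lemma~\ref{lem:theta-comp-GL-Opq} and Proposition~\ref{prop:theta-comp-Anosov} applied to the exterior power representation $\bigwedge^i \rho : \Gamma \to \GL_\RR(\bigwedge^i V)$: a subspace $W \in \overX_b$ meets $L \in \mathcal{F}_i(b)$ nontrivially precisely when the line $\bigwedge^i L$ and the subspace $\bigwedge^i W$ are in a degenerate incidence position inside $\bigwedge^i V$. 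One must check that the ``bad set'' $\mathcal{W}^i_\rho$ and the condition $X_b \subset \overX_b \smallsetminus \mathcal{W}^i_\rho$ translate faithfully under this Plücker embedding. The statement $X_b \subset \overX_b \smallsetminus \mathcal{W}^i_\rho$ is the easy half: if $W \in X_b$, then $b|_{W \times W}$ is negative definite, so $W$ contains no $b$-isotropic vector, hence no nonzero vector of the isotropic subspace $L$; this uses only that every $L \in \mathcal{L}^{\mathcal{F}_i(b)}_{\rho(\Gamma)} \subset \mathcal{F}_i(b)$ is $b$-isotropic.

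**Proper discontinuity, the core argument.** For properness I would argue by contradiction: suppose there is a compact set $C \subset \overX_b \smallsetminus \mathcal{W}^i_\rho$ and an infinite sequence of pairwise distinct $\gamma_n \in \Gamma$ with $\rho(\gamma_n) C \cap C \neq \emptyset$; pick $W_n \in C$ with $\rho(\gamma_n) W_n \in C$. After passing to a subsequence, $W_n \to W_\infty \in C$ and $\rho(\gamma_n) W_n \to W'_\infty \in C$. Since $\rho$ is $P_i(b)$-divergent, the sequence $\rho(\gamma_n)$ is $P_i(b)$-divergent, so $\langle \alpha_i, \mu(\rho(\gamma_n)) \rangle \to +\infty$; passing to a further subsequence we may assume $\Xi_{\{\alpha_i\}}(\rho(\gamma_n)) \to L^+ \in \mathcal{L}^{\mathcal{F}_i(b)}_{\rho(\Gamma)}$ and $\Xi_{\{\alpha_i\}}(\rho(\gamma_n)^{-1}) \to L^- \in \mathcal{L}^{\mathcal{F}_i(b)}_{\rho(\Gamma)}$ (using Remark~\ref{rem:div}.\eqref{item:rem-div-Xi-g-inv} and $\theta = \theta^\star$ for $\OO(b)$). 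The Cartan decomposition in the realization on $\mathrm{Gr}_q(V)$ then gives the dynamical contraction statement: for any $W \in \overX_b$ with $W$ in a suitably generic position relative to $L^-$ — concretely, $W + (L^-)^{\perp_b} = V$, equivalently $W \cap L^- = \{0\}$ by Lemma~\ref{lem:incid_iso_neg}.\eqref{item:incid} — one has $\rho(\gamma_n) W \to W_* $ with $W_* \supset L^+$, so $W_* \cap L^+ \neq \{0\}$, forcing $W_* \in \mathcal{W}^i_\rho$. Since $W_\infty \in C \subset \overX_b \smallsetminus \mathcal{W}^i_\rho$ we have $W_\infty \cap L^- = \{0\}$, so this contraction applies to $W = W_\infty$, and $\rho(\gamma_n) W_n \to W_* \in \mathcal{W}^i_\rho$ (the perturbation $W_n \to W_\infty$ does not affect the limit, by continuity of the dynamics on the open set of subspaces transverse to $L^-$). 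But $\rho(\gamma_n) W_n \to W'_\infty \in C$, contradicting $C \cap \mathcal{W}^i_\rho = \emptyset$.

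**The main obstacle.** The technical heart is establishing the contraction statement for the $\OO(b)$-action on $\overX_b \subset \mathrm{Gr}_q(V)$: that if $\mu(g_n) \to \infty$ along $\alpha_i$ with $\Xi(g_n) \to L^+$ and $\Xi(g_n^{-1}) \to L^-$, then $g_n W \to$ a subspace containing $L^+$, uniformly for $W$ in compact subsets of $\{W : W \cap L^- = \{0\}\}$. This is most cleanly seen through the Plücker/exterior-power picture, where it reduces to the well-understood proximal dynamics of a $P_1$-divergent sequence in $\GL_\RR(\bigwedge^i V)$ acting on the Grassmannian $\mathrm{Gr}_{\binom{q}{i}}$ containing $\bigwedge^i W$: the attracting ``line'' $\bigwedge^i L^+$ gets absorbed into the limit, and the repelling hyperplane $(\bigwedge^i L^-)^{\perp}$ must be avoided. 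One then has to translate ``$\bigwedge^i W$ avoids the repelling hyperplane'' back to ``$W \cap L^- = \{0\}$'' and ``limit contains $\bigwedge^i L^+$'' back to ``limit contains $L^+$'', which is exactly the content of Lemma~\ref{lem:incid_iso_neg} together with elementary multilinear algebra. A secondary point requiring care is the uniformity in $W_n \to W_\infty$: since the set of $W$ transverse to $L^-$ is open and the convergence $g_n W \to W_*$ is locally uniform there, and $W_n$ eventually lies in a fixed compact neighborhood of $W_\infty$ inside this open set, the limit of $g_n W_n$ agrees with the limit of $g_n W_\infty$.
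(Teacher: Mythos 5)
Your proposal is correct and follows essentially the same strategy as the paper: proper discontinuity via the dynamical (non-)relation criterion, combined with a Cartan-decomposition analysis of a divergent sequence $\rho(\gamma_n)=k_na_n\ell_n$ showing that any limit of $\rho(\gamma_n)\cdot W_n$ must meet the attracting limit point $L^+$ whenever the initial limit $W$ avoids $L^-$; the identification of transversality to $L^-$ with $W+(L^-)^{\perp_b}=V$ via Lemma~\ref{lem:incid_iso_neg} and the use of Remark~\ref{rem:div} to place $L^{\pm}$ in the limit set are exactly as in the paper. The one place you diverge is in how the contraction estimate is established: the paper argues directly in $\mathrm{Gr}_q(V)$, choosing a single vector $w_\infty\in W$ not in $(L^-)^{\perp_b}$ and rescaling $\rho(\gamma_n)\cdot w_n$ by the norm of its first $i$ coordinates to exhibit one explicit nonzero vector of $L^+$ in the limit $W'$ --- which already gives $W'\in\mathcal{W}^i_\rho$, since that set is defined by \emph{nontrivial intersection} rather than containment --- whereas you reduce to rank-one proximal dynamics on $\PP(\bigwedge^i V)$ via the Pl\"ucker embedding and prove the stronger statement $L^+\subset W'$. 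Both routes work; the paper's is more elementary and avoids having to verify locally uniform convergence on the open set of subspaces transverse to $L^-$ (it perturbs a single vector $w_n\to w_\infty$ instead of a whole subspace), while yours parallels the exterior-power reduction the paper itself uses later for the cocompactness statement (Lemma~\ref{lem:compactness-GLV}) and would let you quote a single $i=1$ contraction lemma twice. If you flesh out your version, the main things to write down carefully are the translation "$\bigwedge^iW\not\subset H^-$ iff $W+(L^-)^{\perp_b}=V$" and the locally uniform nature of the convergence, both of which you have correctly identified as the technical heart.
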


In fact we prove the following very general statement.

\begin{proposition}\label{prop:propernessPk-general}
Let \(b\) be a nondegenerate symmetric bilinear form of signature \((p,q)\) on a real vector space \(V\), where $p,q\in\NN^{\ast}$ and $p\neq q$.
Let $\Gamma$ be a discrete group and \(\rho : \Gamma\to\OO(b)\) any representation with discrete image and finite kernel.
Let
  \begin{equation*}
    \mathcal{W}_\rho = \bigcup_{i\in I_{\rho}}\ \bigcup_{L \in \mathcal{L}^{\mathcal{F}_i(b)}_{ \rho(\Gamma)}} \bigl\{ W\in \overX_{b} \mid L \cap W \neq \{0\}\bigr\},
  \end{equation*}
  where $I_{\rho}\subset\{ 1,\dots,\min(p,q)\}$ is the set of integers $i$ such that $\langle\alpha_i,\mu(\rho(\Gamma))\rangle$ is unbounded, and \(\mathcal{L}^{\mathcal{F}_i(b)}_{\rho(\Gamma)} \subset \mathcal{F}_i(b)\) is the limit set of \(\rho(\Gamma)\).
  Then \(\overX_{b} \smallsetminus \mathcal{W}_\rho\) contains $X_b$ and the action of \(\Gamma\) on \(\overX_{b} \smallsetminus \mathcal{W}_\rho\) is properly discontinuous.
\end{proposition}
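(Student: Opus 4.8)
The plan is to reduce proper discontinuity to a statement about Cartan projections and limit sets, using the compactness of $\overX_{b}$. First I would observe that $X_{b} \subset \overX_{b} \smallsetminus \mathcal{W}_{\rho}$: if $W$ is negative definite and $L$ is a nonzero $b$-isotropic subspace, then $L\cap W=\{0\}$ since $L$ contains an isotropic vector while $W$ contains none. For proper discontinuity, since $\overX_{b}$ is compact and metrizable, it suffices to show that for every $W_{0}\in\overX_{b}\smallsetminus\mathcal{W}_{\rho}$ there is a neighborhood $U$ of $W_{0}$ such that $\{\gamma\in\Gamma \mid \rho(\gamma)U\cap U\neq\emptyset\}$ is finite; equivalently, I would argue by contradiction: suppose $(\gamma_{n})$ is a sequence of pairwise distinct elements with $\rho(\gamma_{n})W_{n}\to W_{\infty}$ and $W_{n}\to W_{0}$ for some $W_{0},W_{\infty}\in\overX_{b}\smallsetminus\mathcal{W}_{\rho}$, and derive a contradiction by showing $W_{0}\in\mathcal{W}_{\rho}$ or $W_{\infty}\in\mathcal{W}_{\rho}$.

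The core of the argument is a Cartan-decomposition analysis. Writing $\rho(\gamma_{n}) = k_{n}\exp(\mu(\rho(\gamma_n)))\ell_{n}$ with $k_{n},\ell_{n}\in K$, I would pass to a subsequence so that $k_{n}\to k$, $\ell_{n}\to\ell$ in $K$, and so that for each index $i\in\{1,\dots,q\}$ the quantity $\langle\alpha_{i},\mu(\rho(\gamma_n))\rangle$ either stays bounded or tends to $+\infty$; let $J\subset\{1,\dots,q\}$ be the set of indices where it tends to $+\infty$. Since the $\gamma_n$ are pairwise distinct and $\rho$ has discrete image with finite kernel, $\mu(\rho(\gamma_n))\to\infty$ in $\overline{\aaa}^{+}$, so $J\neq\emptyset$; moreover $J\subset I_{\rho}$ by definition of $I_{\rho}$. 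Let $i$ be the smallest element of $J$. In the orthogonal group, the structure of $\exp(\overline{\aaa}^{+})$ acting on $\mathrm{Gr}_{q}(V)$ is explicit: if $\langle\alpha_i,\mu(\rho(\gamma_n))\rangle\to+\infty$, then $\exp(\mu(\rho(\gamma_n)))$ drifts so that any $q$-subspace not meeting the "contracted" isotropic directions gets pushed toward an attracting configuration, while the source is the set of $W$ meeting a certain isotropic subspace. Concretely, using $k_{n}\to k$ and Remark~\ref{rem:rem-div-Xi-g-inv}, I would show: either $W_{0}$ meets the isotropic $i$-subspace $\Xi_{i}(\rho(\gamma_n^{-1}))$ (after passing to a limit, an element of $\mathcal{L}^{\mathcal{F}_{i}(b)}_{\rho(\Gamma)}$, since $(\gamma_n^{-1})$ is $P_{i}(b)$-divergent — here I use that $J\subset I_\rho$ controls divergence of $\mu(\rho(\gamma_n^{-1}))$ via $\langle\alpha_i,\mu(g)\rangle=\langle\alpha_i^{\star},\mu(g^{-1})\rangle$ and $\alpha_i^\star=\alpha_i$ for the relevant indices, cf.\ Section~\ref{sec:orthogonal-groups}), giving $W_{0}\in\mathcal{W}_{\rho}$; or else $\rho(\gamma_n)W_{n}$ converges into the attracting set, which is contained in $\{W \mid L\cap W\neq\{0\}\}$ for $L=\lim\Xi_{i}(\rho(\gamma_n))\in\mathcal{L}^{\mathcal{F}_{i}(b)}_{\rho(\Gamma)}$, giving $W_{\infty}\in\mathcal{W}_{\rho}$. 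Either way we contradict $W_{0},W_{\infty}\notin\mathcal{W}_{\rho}$.

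The main obstacle I anticipate is making the "attracting/repelling" dichotomy precise at the level of the Grassmannian $\mathrm{Gr}_{q}(V)$, because $\exp(\overline{\aaa}^{+})$ does not act with a single attracting fixed point on $\mathrm{Gr}_q(V)$ — the dynamics depends on which walls $\mu(\rho(\gamma_n))$ escapes through, and the limit $W_\infty$ need not be an isotropic subspace but only an element of $\overX_{b}$. The right bookkeeping device is to track, for $W_n\to W_0$ with $W_0\notin\mathcal{W}_\rho$, the "size" of the component of $W_n$ along the contracted directions: the condition $W_0\notin\mathcal{W}_\rho$ forces $W_n$ to be bounded away from the repelling set, so that under $\exp(\mu(\rho(\gamma_n)))$ and then $k_n$, every isotropic line in $\rho(\gamma_n)W_n$ concentrates in the limit near $k\cdot(\text{highest-weight flag})=\lim\Xi_1\big(\rho(\gamma_n)\big)$, and more refinedly $\lim\Xi_i(\rho(\gamma_n))$ meets $W_\infty$ nontrivially. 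I would handle this by working with the exterior power $\bigwedge^{q}V$ — or more simply by Lemma~\ref{lem:incid_iso_neg}, which translates the condition $L\cap W\neq\{0\}$ into $W+L^{\perp_{b}}\neq V$, a closed condition well-suited to taking limits — and a direct estimate on matrix coefficients in the Cartan decomposition, in the spirit of the proximality arguments of \cite{Benoist}. The case $p\neq q$ (rather than $p=q$) is used to guarantee $\alpha_i^\star=\alpha_i$ for all $i\in\{1,\dots,q\}$, so that $P_i(b)$-divergence of $(\gamma_n)$ and of $(\gamma_n^{-1})$ coincide and both limit points land in the same limit set $\mathcal{L}^{\mathcal{F}_i(b)}_{\rho(\Gamma)}$.
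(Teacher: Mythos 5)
Your plan is correct and follows essentially the same route as the paper: the paper applies its dynamical properness criterion (Criterion~\ref{crit:dyn-proper}), extracts a $P_{\{\alpha_i\}}$-divergent subsequence from any sequence going to infinity by properness of~$\mu$, and proves exactly your dichotomy in Lemma~\ref{lem:propernessPk-seq} via the Cartan decomposition with converging $K$-parts, Lemma~\ref{lem:incid_iso_neg} (to turn $W_0\cap L^-=\{0\}$ into the existence of a vector of $W_0$ with nonzero component along the contracted directions), and the rescaling/matrix-coefficient estimate you sketch, which produces a nonzero vector of $W_\infty$ inside $L^+=\lim\Xi_i(\rho(\gamma_n))$. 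The only caveat is that the pointwise neighborhood condition you first state is strictly weaker than proper discontinuity; the sequence formulation allowing $W_0\neq W_\infty$, which is what you actually argue with, is the correct criterion.
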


 \begin{remarks}\label{rem:properness}
 \begin{enumerate}
   \item \label{item:bordif-general}
     Proposition~\ref{prop:propernessPk-general} provides a bordification of 
     $\Lambda\backslash X_b$ locally modeled on \(\overX_b\) for \emph{any}
     discrete subgroup $\Lambda$ of $\OO(b)$.
   From this we deduce a bordification of $\Lambda\backslash G/K$
   as a manifold with corners for \emph{any} discrete subgroup $\Lambda$ of any
   semisimple Lie group~$G$ (Theorem~\ref{thm:bdf}).
   \item\label{item:1-rem-prop} In \cite{KapovichLeeb15}, bordifications are constructed, by a different method, for discrete subgroups $\Gamma$ of a simple group~$G$ that are \emph{uniformly $\tau_{mod}$-regular} for some facet $\tau_{mod}$ of~$\overline{\aaa}^+$.
   If we write $\tau_{mod} = \overline{\aaa}^+ \cap \bigcap_{\alpha\in\Delta\smallsetminus\theta} \Ker(\alpha)$ for some nonempty $\theta\subset\Delta$, then these are the discrete subgroups $\Gamma$ of~$G$ for which there exist $c,C>0$ such that for any $\alpha\in\theta$ and any $\gamma\in\Gamma$,
   \[ \langle\alpha,\mu(\gamma)\rangle \geq c\,\Vert\mu(\gamma)\Vert - C, \]
   where $\Vert\cdot\Vert$ is a fixed norm on~$\aaa$.
   In other words, $\Gamma$ is $P_{\theta}$-divergent with a linear rate of divergence.
\end{enumerate}
 \end{remarks}

Recall that two points \(x\) and \(x'\) of \(X\) are said to be \emph{dynamically related}
  if there exist a sequence \((x_n)_{n\in \NN}\) in \(X^\NN\) converging to \(x\)
  and a sequence \((\gamma_n)_{n\in\NN} \in \Gamma^\NN\) going to infinity (\ie leaving every finite subset of~$\Gamma$) such that the sequence \((\gamma_n\cdot x_n)_{n\in\NN}\)
  converges to~\(x'\).
  Propositions \ref{prop:propernessPk} and~\ref{prop:propernessPk-general} are immediate consequences of the following classical dynamical criterion for properness (see \eg \cite{Frances_Lorentzian} for a proof) and of the following lemma.
\begin{criterion} \label{crit:dyn-proper}
  A group \(\Gamma\)
  acts properly discontinuously on a Hausdorff topological space \(X\)
  if and only if no pairs of points of \(X\) are dynamically related.
\end{criterion}

\begin{lemma}\label{lem:propernessPk-seq}
In the setting of Proposition~\ref{prop:propernessPk}, consider an arbitrary sequence \((W_n)_{n\in\NN}\in (\overX_{b} \smallsetminus \mathcal{W}^i_\rho)^\NN\) converging to some \(W\in \overX_{b} \smallsetminus \mathcal{W}^i_\rho\) and an arbitrary $P_{i}(b)$-divergent sequence \((\rho(\gamma_n))_{n\in \NN}\in \Gamma^\NN\) such that \((W'_n=\rho(\gamma_n)\cdot W_n)_{n\in \NN}\) converges to some \(W'\in\mathrm{Gr}_q(V)\).
Then \(W'\in\mathcal{W}^i_\rho\).
\end{lemma}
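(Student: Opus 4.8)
The plan is to argue by contradiction using the $P_i(b)$-divergence of the sequence $(\rho(\gamma_n))$ to extract a limit point in $\mathcal{L}^{\mathcal{F}_i(b)}_{\rho(\Gamma)}$ that meets the limit configuration $W'$ nontrivially. First I would apply the Cartan decomposition: write $\rho(\gamma_n) = k_n \exp(\mu(\rho(\gamma_n))) \ell_n$ with $k_n, \ell_n \in K$, and after passing to a subsequence assume $k_n \to k$ and $\ell_n \to \ell$ in the compact group $K$. Because the sequence is $P_i(b)$-divergent, we have $\langle \alpha_i, \mu(\rho(\gamma_n))\rangle \to +\infty$, so by the description in Example~\ref{ex:Ano-Opq} and Remark~\ref{rem:div}.\eqref{item:rem-div-Xi-g-inv} the flags $\Xi_i(\rho(\gamma_n)) = k_n \cdot P_i(b)$ converge (along the subsequence) to a point $L^+ = k \cdot P_i(b) \in \mathcal{L}^{\mathcal{F}_i(b)}_{\rho(\Gamma)}$, and similarly $\Xi_i(\rho(\gamma_n)^{-1}) = \ell_n^{-1} \cdot P_i(b)^-$ converges to a point $L^- = \ell^{-1}\cdot P_i(b)^-$, which lies in $\mathcal{L}^{\mathcal{F}_i(b)}_{\rho(\Gamma)}$ as well (using $\theta = \theta^\star$ here, so that the limit set in $\mathcal{F}_i(b)$ for $(\gamma_n^{-1})$ coincides with that for $(\gamma_n)$).

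The key dynamical step is the following contraction/expansion behavior on the Grassmannian $\mathrm{Gr}_q(V)$. Pass to the $\aaa$-side: let $a_n = \exp(\mu(\rho(\gamma_n)))$ act diagonally. The point $W_n$ lies in $\overX_b \smallsetminus \mathcal{W}^i_\rho$, and in particular $W_n$ has trivial intersection with the isotropic $i$-plane $L^-_{\mathrm{std}} = \RR e_{p+q} \oplus \cdots \oplus \RR e_{p+q-i+1}$ translated appropriately — more precisely, I need $\ell_n \cdot W_n$ to stay away (uniformly along a subsequence) from the Schubert-type variety $\{W : W \cap L^-_{\mathrm{std}} \neq \{0\}\}$. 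This is where the hypothesis $W \notin \mathcal{W}^i_\rho$, i.e. $W$ avoids all isotropic $i$-planes in the limit set, gets used: since $L^- \in \mathcal{L}^{\mathcal{F}_i(b)}_{\rho(\Gamma)}$ and $W \notin \mathcal{W}^i_\rho$, we have $L^- \cap W = \{0\}$, hence (openness of transversality) $L^-$-translates of $W_n$ are eventually transverse to the relevant isotropic plane. Then the standard north-south dynamics of the diagonal action of $a_n$ on $\mathrm{Gr}_q(V)$ — contracting towards the $q$-plane spanned by the top weight directions, which after applying $k_n$ becomes the attracting flag $L^+$ — forces $\rho(\gamma_n) \cdot W_n = k_n a_n \ell_n \cdot W_n$ to converge to a $q$-plane $W'$ that contains $L^+$. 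In particular $L^+ \cap W' \supset L^+ \neq \{0\}$, so $W' \in \mathcal{W}^i_\rho$, which is the desired conclusion.

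The main obstacle I anticipate is making the north-south dynamics argument uniform: the diagonal matrices $a_n$ are $P_i(b)$-divergent, meaning $\langle \alpha_i, \mu\rangle \to \infty$, but the other simple roots $\alpha_j$ need not diverge, so $a_n$ is not proximal in $\mathrm{Gr}_q(V)$ and there is no single attracting point — only an attracting \emph{Schubert cell} corresponding to the facet. I would handle this by working with the partial flag structure: the relevant statement is that $a_n$ acts on $\mathrm{Gr}_q(V)$ with an attracting set $\{W : W \supset L^+_{\mathrm{std},i}\}$ with basin of attraction $\{W : W \cap L^-_{\mathrm{std},i} = \{0\}\}$ where these are the standard isotropic $i$-planes (top and bottom). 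This is a consequence of the explicit eigenvalue gaps: the $i$ largest weights of $a_n$ on $V$ strictly dominate (by $\langle\alpha_i,\mu\rangle \to \infty$) the remaining weights, so any $q$-plane avoiding the bottom isotropic $i$-plane has its image converging into $\{W \supset L^+_{\mathrm{std},i}\}$. Compactness of $K$ and passage to subsequences then transfers this to $\rho(\gamma_n)$. A secondary technical point is verifying that the $q$-plane limit $W'$ is genuinely $q$-dimensional (no dimension drop) — but this is automatic since $\mathrm{Gr}_q(V)$ is compact and $\rho(\gamma_n)$ acts by homeomorphisms on it, so limits of $q$-planes are $q$-planes. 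Finally, one should note that the hypotheses $(p,q) \neq (1,1)$ and $p \neq q$ (or $i < p$ when $p=q$) are exactly what guarantee that the attracting isotropic $i$-plane and its opposite behave as stated and that $\overX_b \smallsetminus \mathcal{W}^i_\rho$ indeed contains $X_b$ (an anisotropic negative-definite $q$-plane meets no isotropic line, a fortiori no isotropic $i$-plane).
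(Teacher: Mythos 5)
Your overall architecture is the same as the paper's: Cartan decomposition $\rho(\gamma_n)=k_na_n\ell_n$, extraction of the limit flags $L^{\pm}\in\mathcal{L}^{\mathcal{F}_i(b)}_{\rho(\Gamma)}$, the observation that $W\cap L^-=\{0\}$, and then the contracting dynamics of $a_n$ on the Grassmannian to force the limit $W'$ to meet $L^+$. (You even aim for the stronger conclusion $L^+\subset W'$, where the paper is content to exhibit a single nonzero vector of $L^+$ in $W'$; the stronger statement does hold.)

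However, there is a genuine gap in your key dynamical step. The statement you rely on --- that $a_n$ acts on $\mathrm{Gr}_q(V)$ with attracting set $\{W\supset L^+_{\mathrm{std}}\}$ and basin of attraction $\{W : W\cap L^-_{\mathrm{std}}=\{0\}\}$, so that ``any $q$-plane avoiding the bottom isotropic $i$-plane has its image converging into $\{W\supset L^+_{\mathrm{std}}\}$'' --- is false for general elements of $\mathrm{Gr}_q(V)$. Take $V=\RR^3$ with $b(x,y)=x_1y_3+x_3y_1+x_2y_2$, $i=q=1$, $a_n=\mathrm{diag}(e^n,1,e^{-n})$: the line $\RR e_2$ avoids $L^-_{\mathrm{std}}=\RR e_3$, yet it is fixed by every $a_n$ and its limit does not even meet $L^+_{\mathrm{std}}=\RR e_1$. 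The correct basin for the attraction you want is $\{W : W+(L^-_{\mathrm{std}})^{\perp_{b}}=V\}$, i.e.\ the set of $W$ surjecting onto $L^+_{\mathrm{std}}=\mathrm{span}(e_1,\dots,e_i)$ under the projection along $\mathrm{span}(e_{i+1},\dots,e_{p+q})$; that is the condition which, combined with the gap $\langle\alpha_i,\log a_n\rangle\to+\infty$, lets you push an $i$-dimensional subspace of $W_n$ onto $L^+_{\mathrm{std}}$. The hypothesis you actually have, $W\cap L^-=\{0\}$, is strictly weaker for arbitrary $q$-planes. What saves the argument is that $W$ lies in $\overX_{b}$, i.e.\ is a \emph{nonpositive} $q$-plane, and for such $W$ Lemma~\ref{lem:incid_iso_neg}\eqref{item:incid} gives exactly the equivalence $W\cap L^-=\{0\}\Leftrightarrow W+(L^-)^{\perp_{b}}=V$. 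Your proposal never invokes the nonpositivity of $W$ (or Lemma~\ref{lem:incid_iso_neg}) at this point --- the only place you mention nonpositivity is in the unrelated remark that $X_b\subset\overX_{b}\smallsetminus\mathcal{W}^i_\rho$ --- so the proof as written does not go through. This is precisely where the paper uses Lemma~\ref{lem:incid_iso_neg} to produce a vector $w_\infty\in W\smallsetminus(L^-)^{\perp_{b}}$ before running the weight estimates; once you insert that step, the rest of your argument (uniformity in $n$ by openness, and the dimension bookkeeping) is fine.
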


\begin{proof}[Proof of Lemma~\ref{lem:propernessPk-seq}]
We write $\rho(\gamma_n) = k_n a_n \ell_n \in K \exp(\overline{\aaa}^+) K$.
Up to extracting, we can assume that the sequences \((k_n)_{n\in \NN}\) and \((\ell_n)_{n\in \NN}\) converge to some \(k_\infty,\ell_\infty\in K\), respectively.
By Definition~\ref{def:lim-set} of the limit set (see Section~\ref{sec:orthogonal-groups} and Remark~\ref{rem:div}.\eqref{item:rem-div-Xi-g-inv}),
\begin{equation*}
  L^+ := k_\infty \cdot (\RR e_1\oplus \cdots \oplus \RR e_i) \quad \text{and} \quad
  L^- := \ell_{\infty}^{-1} \cdot (\RR e_{p+q-i+1}\oplus \cdots \oplus\RR e_{p+q})
\end{equation*}
belong to the limit set \(\mathcal{L}^{\mathcal{F}_i(b)}_{\rho(\Gamma)}\).
The assumption \(W\cap L^- = \{0\}\) and Lemma~\ref{lem:incid_iso_neg} imply that \(W\) is not contained in \((L^-)^{\perp_{b}} = \ell_{\infty}^{-1}\cdot (\RR e_{i+1}\oplus \cdots \oplus\RR e_{p+q})\).
This means that there exist \(w_\infty\in W\) and \(c_1, \dots, c_{p+q}\in\RR\) such that
\begin{equation*}
  \ell_\infty\cdot w_\infty =  \sum_{j=1}^{p+q} c_j e_j,
\end{equation*}
and \((c_1, \dots, c_i) \neq 0\).
There is a sequence \((w_n)_{n\in \NN}\in V^\NN\) converging to \(w_\infty\) such that \(w_n \in W_n\) for all~\(n\).
The sequence \((\ell_n \cdot w_n)_{n\in\NN}\) converges to \(\ell_\infty\cdot w_\infty\).
We write \(\ell_n \cdot w_n = \sum_{j=1}^{p+q} c_{j,n} e_j\), thus \(\lim_n c_{j,n} = c_j\) for any \(j\in \{ 1, \dots, p+q\}\).

For \(n\in \NN\), let \(r_n\) be the inverse of the Euclidean norm of the vector \((e^{ \langle \varepsilon_j, \log a_n\rangle} \, c_{j,n})_{j=1, \dots, i} \in \RR^i\).
Set \(d_{j,n} := r_n\, e^{ \langle \varepsilon_j, \log a_n\rangle} \, c_{j,n}\), for \(j\in \{1, \dots, i\}\) and \(n\in \NN\).
Up to extracting a subsequence, the sequence of \(i\)-tuples \(  (d_{1,n}, \dots, d_{i,n} )_{n\in\NN}\) converges to some \((d_1, \dots, d_i)\) of norm \(1\) in~\(\RR^i\).
Consider \(j_0 \in \{ 1, \dots, i\}\) such that \(c_{j_0}\neq 0\), the sequence
\[ \bigl(r_n\, e^{ \langle \varepsilon_{j_0}, \log a_n\rangle}\bigr)_{n\in\NN} = \Bigl(\frac{d_{j_0,n}}{c_{j_0,n}}\Bigr)_{n\in\NN} \]
converges to \(d_{j_0}/c_{j_0}\) and is thus bounded.
This implies that for every \(j>i\) the sequence \((r_n\, e^{ \langle \varepsilon_j, \log a_n\rangle} \, c_{j,n})_{n\in\NN}\) converges to zero since 
\begin{align*}
r_n\, e^{ \langle \varepsilon_j, \log a_n\rangle} \, c_{j,n} & =  r_n\, e^{ \langle \varepsilon_{j_0}, \log a_n\rangle} \, c_{j,n} \, e^{ -\langle \varepsilon_{j_0} - \varepsilon_j, \log a_n\rangle}\\
& \leq  r_n\, e^{ \langle \varepsilon_{j_0}, \log a_n\rangle} \, c_{j,n} \, e^{ -\langle \alpha_i, \log a_n\rangle},
\end{align*}
which converges to~$0$ by \(P_{i}\)-divergence of $(\rho(\gamma_n))_{n\in\NN}$.

We claim that the sequence \((v_n)_{n\in \NN}\) defined by \[v_n = r_n\, \rho(\gamma_n)\cdot w_n \in W'_n,\text{ for all } n\in \NN,\] 
converges to \(v_\infty = k_\infty \cdot (d_1 e_1 + \cdots + d_i e_i)\in L^+\). 
Indeed,
\begin{align*}
  k_{n}^{-1} \cdot v_n 
  & = r_n a_n \ell_n \cdot w_n =  \sum_{j=1}^{p+q}  r_n c_{j,n}\,
    a_n \cdot e_j \\ 
  & =  \sum_{j=1}^{p+q} r_n \, c_{j,n} \, e^{\langle \varepsilon_j,
    \log a_n \rangle}\,  e_j  \underset{n\to \infty}{\longrightarrow} \sum_{j=1}^{i} d_j e_j.
\end{align*}
By the convergence of \((  W'_n)_{n\in \NN}\) to \(W'\), \(v_\infty\) belongs to \(W'\) as well. 
Hence \(W'\) has  nontrivial intersection with \(L^+\) and  belongs to \( \mathcal{W}^i_\rho\).
\end{proof}

\begin{proof}[Proof of Proposition~\ref{prop:propernessPk-general}]
Apply Criterion~\ref{crit:dyn-proper} and Lemma~\ref{lem:propernessPk-seq}, using the fact that if $\rho : \Gamma\to G$ is discrete with finite kernel, then for any sequence \((\gamma_n)_{n\in\NN} \in \Gamma^\NN\) going to infinity, up to passing to a subsequence, there exists $i$ such that $(\rho(\gamma_n))_{n\in\NN}$ is $P_{\{\alpha_i\}}$-divergent (by properness of the map~$\mu$). 
\end{proof}

\subsection{Compactness}
\label{sec:proof-cocomp-theor-1}

We now restrict to a special class of divergent representations, namely Anosov representations (Definition~\ref{defi:ano_theta}).
Compactness of $\rho(\Gamma)\backslash\Omega$ in Theorem~\ref{thm:comp_loc_sym_spc_opq} is a consequence of the following general result.

\begin{lemma} \label{lem:cocompactness}
    Let \(b\) be a nondegenerate symmetric bilinear form of signature \((p,q)\) on a real vector space~\(V\), where $p,q\in\NN^*$ and $(p,q)\neq (1,1)$.
  Let $1\leq i\leq\min(p,q)$; if $p=q$, assume that $i<p-1$.
For any word hyperbolic group $\Gamma$ and any \(P_i(b)\)-Anosov representation \(\rho :\nolinebreak\Gamma\to\nolinebreak\OO(b)\) with boundary map \(\xi_i : \partial_\infty \Gamma \to \mathcal{F}_i( b)\), let
  \begin{equation*}
    \mathcal{V}_\rho = \bigcup_{\eta \in \partial_\infty \Gamma} \bigl\{ W\in \overX_{b} \mid \xi_i(\eta ) \subset  W \bigr\}.
  \end{equation*}
  Then the action of \(\Gamma\) on \(\overX_{b} \smallsetminus \mathcal{V}_\rho\) is cocompact.
\end{lemma}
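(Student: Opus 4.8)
\textbf{Proof plan for Lemma~\ref{lem:cocompactness}.}

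The plan is to show that $\overX_b\smallsetminus\mathcal{V}_\rho$ admits a cocompact action by exhibiting a compact subset $\mathcal{C}\subset\overX_b\smallsetminus\mathcal{V}_\rho$ whose $\Gamma$-translates (via $\rho$) cover everything, and I would argue by contradiction using a sequence that escapes every such translate. So suppose the action is not cocompact: then there is a sequence $(W_n)_{n\in\NN}$ in $\overX_b\smallsetminus\mathcal{V}_\rho$ such that the $\Gamma$-orbit of each $W_n$ eventually avoids any fixed compact set, equivalently, after replacing $W_n$ by a suitable translate $\rho(\gamma_n)\cdot W_n$, we may assume $W_n\to W\in\overX_b$ while simultaneously the "defect'' of $W_n$ from the thick part goes to infinity. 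The cleanest way to make this precise is: assume for contradiction that no compact set surjects, pick $W_n$ whose distance (in $\overX_b$) to $\mathcal{V}_\rho$ tends to $0$ faster than can be corrected by the group; passing to a subsequence, $W_n\to W$ and there is a sequence $\gamma_n\to\infty$ in $\Gamma$ with $\rho(\gamma_n)\cdot W_n$ also converging, say to $W'$, and with the limiting data forcing $W\in\mathcal{V}_\rho$ — contradicting $W_n\in\overX_b\smallsetminus\mathcal{V}_\rho$.

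The main technical input is a "dual'' version of Lemma~\ref{lem:propernessPk-seq}. Write $\rho(\gamma_n)=k_na_n\ell_n\in K\exp(\overline{\aaa}^+)K$, pass to a subsequence so that $k_n\to k_\infty$, $\ell_n\to\ell_\infty$ in $K$. Since $\rho$ is $P_i(b)$-Anosov it is $P_i(b)$-divergent, so $\langle\alpha_i,\log a_n\rangle\to+\infty$, and by Lemma~\ref{lem:anosov-lim-set} together with Remark~\ref{rem:div}.\eqref{item:rem-div-Xi-g-inv} the subspaces
\[
L^+ := k_\infty\cdot(\RR e_1\oplus\cdots\oplus\RR e_i)
\quad\text{and}\quad
L^- := \ell_\infty^{-1}\cdot(\RR e_{p+q-i+1}\oplus\cdots\oplus\RR e_{p+q})
\]
both lie in $\xi_i(\partial_\infty\Gamma)$; indeed each is $\xi_i(\eta^{\pm})$ for the limit points $\eta^{\pm}\in\partial_\infty\Gamma$ of $(\gamma_n^{\pm 1})$ (here one uses that $\partial_\infty\Gamma$ is compact, that a hyperbolic group element going to infinity has North–South–type dynamics on $\partial_\infty\Gamma$ up to subsequence, and the dynamics-preserving property of $\xi_i$). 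Now the key dichotomy: either $W$ contains $L^+$ — in which case $W\in\mathcal{V}_\rho$, done — or $W$ does not contain $L^+$; I claim in the latter case the rescaled images $\rho(\gamma_n)\cdot W_n$, after normalization, accumulate on a $q$-plane $W'$ that contains $L^-$. Concretely, the component of $W$ "transverse to $L^+$'' gets contracted, so running the Cartan decomposition as in the proof of Lemma~\ref{lem:propernessPk-seq} (but now tracking the \emph{smallest}, rather than largest, dilation factors, i.e.\ the $e^{\langle\varepsilon_j,\log a_n\rangle}$ for $j>p+q-i$) one finds vectors $v_\infty\in L^-\cap W'$. Using Lemma~\ref{lem:incid_iso_neg}.\eqref{item:incid} applied to the isotropic space $L^-$, the containment $L^-\subset W'$ is equivalent to $W'\subset(L^-)^{\perp_b}$, which is exactly what the contraction estimate delivers. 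Then $W'\in\mathcal{V}_\rho$, and since $W'=\lim\rho(\gamma_n)\cdot W_n$ lies at bounded distance from a fixed compact set while $W_n$ was chosen escaping, we contradict the non-cocompactness assumption.

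I would organize the argument in two steps. \emph{Step 1 (compact core).} Show directly that the "diagonal'' part $X_b\subset\overX_b\smallsetminus\mathcal{V}_\rho$ together with a neighborhood of the relevant boundary strata has compact quotient; more precisely, any $W\in\overX_b\smallsetminus\mathcal{V}_\rho$ has the property that $\xi_i(\eta)\not\subset W$ for every $\eta$, and by compactness of $\partial_\infty\Gamma$ and continuity of $\xi_i$ the "angle'' between $W$ and the closed set $\xi_i(\partial_\infty\Gamma)$ of non-containment is bounded below on compact pieces. \emph{Step 2 (the estimate).} Carry out the Cartan-decomposition computation above to produce, from any escaping sequence, the limit plane $W'\in\mathcal{V}_\rho$, contradicting the definition of $\overX_b\smallsetminus\mathcal{V}_\rho$ being where we land. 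The hypothesis $i<p-1$ when $p=q$ (rather than just $i<p$) is used to ensure $\mathcal{F}_i(b)$ is a single Grassmannian-type flag variety with the transversality of Lemma~\ref{lem:incid_iso_neg} available in the form $L^{\perp_b}+W'=V$; near the two extreme isotropic-$p$-plane families the incidence geometry degenerates and the lemma's dichotomy fails.

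\textbf{Main obstacle.} The delicate point is Step 2: showing that when $L^+\not\subset W$, the images $\rho(\gamma_n)\cdot W_n$ genuinely converge (after extracting and normalizing) to a $q$-plane $W'$ containing $L^-$, rather than merely to a lower-dimensional subspace or a plane meeting $L^-$ only trivially. This requires controlling \emph{all} the Cartan exponents $\langle\varepsilon_j,\log a_n\rangle$ simultaneously — not just the top gap $\langle\alpha_i,\log a_n\rangle$ — and using that $W_n$ is genuinely $q$-dimensional and $b$-nonpositive (via Lemma~\ref{lem:incid_iso_neg}.\eqref{item:nullW}) to guarantee the limit retains full dimension $q$ and the right isotropy. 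The nonpositivity constraint $b|_{W_n}\leq 0$ is what prevents collapse, exactly as in the proof that $\dim Z\le q-1$ in Lemma~\ref{lem:incid_iso_neg}.
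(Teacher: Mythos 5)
There is a genuine gap in your argument: the contradiction you aim for at the end does not exist. You assume non-cocompactness, extract a sequence $(W_n)$ in $\overX_b\smallsetminus\mathcal{V}_\rho$ with $W_n\to W$ and $\rho(\gamma_n)\cdot W_n\to W'$, and conclude by "forcing $W\in\mathcal{V}_\rho$ --- contradicting $W_n\in\overX_b\smallsetminus\mathcal{V}_\rho$" (or, in the other branch, $W'\in\mathcal{V}_\rho$). But $\overX_b\smallsetminus\mathcal{V}_\rho$ is an \emph{open} subset of the compact space $\overX_b$, so a sequence in it may perfectly well converge to a point of $\mathcal{V}_\rho$; likewise $W'\in\mathcal{V}_\rho$ contradicts nothing. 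What your dichotomy actually establishes is that dynamically related limit points land in the bad set --- which is precisely the content of Lemma~\ref{lem:propernessPk-seq} and yields \emph{proper discontinuity}, already proved, not cocompactness. Non-cocompactness is the existence of orbits lying \emph{entirely} within arbitrarily small neighborhoods of $\mathcal{V}_\rho$, and no purely qualitative limit analysis of a single convergent subsequence can rule that out.

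The missing ingredient is quantitative: an expansion estimate near the limit set. The paper first reduces to the full Grassmannian via Lemma~\ref{lem:theta-comp-GL-Opq} (cocompactness of $\Gamma$ on $\mathrm{Gr}_q(V)\smallsetminus\mathcal{B}_\rho$ passes to the closed invariant subset $\overX_b\smallsetminus\mathcal{V}_\rho$), then proves Proposition~\ref{prop:P1-expanding}: for each $\eta\in\partial_\infty\Gamma$ and each $c>1$ there are $\gamma\in\Gamma$ and a neighborhood $U$ of $\mathcal{K}_{\xi_1(\eta)}$ on which $\rho(\gamma)$ expands distances to the sets $\mathcal{K}_L$ by the factor $c$. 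This is fed into the Sullivan-type criterion of Kapovich--Leeb--Porti (Lemma~\ref{lem:compactness-metric}): one chooses $W_n$ nearly maximizing $\dist(\cdot,\mathcal{V}_\rho)$ over its own orbit with that maximum tending to $0$, and the expansion by $c>1$ near the accumulation point contradicts near-maximality. Your Cartan-decomposition computation (tracking the small singular values) is in the right spirit for producing such contraction/expansion, but without isolating a uniform constant $c>1$ on a neighborhood and without the near-maximality selection of $W_n$, the argument does not close. (A minor additional point: your justification of the hypothesis $i<p-1$ when $p=q$ is not the relevant one --- the paper needs it so that $P_i(b)=P_{\{\alpha_i\}}$ rather than $P_{\{\alpha_{p-1},\alpha_p\}}$, keeping the reduction to $\GL_\RR(V)$ via $\bigwedge^i$ available.)
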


Lemma~\ref{lem:cocompactness} itself is a direct consequence of Lemma~\ref{lem:theta-comp-GL-Opq}.\eqref{item:theta-comp-GL-Opq-Ano}, of the fact that \(\overX_b \subset \mathrm{Gr}_q(V)\) is closed, and of the following statement.

\begin{lemma} \label{lem:compactness-GLV}
  Let \(V\) be a real vector space of dimension~$n$, let \(1\leq i \leq q \leq n-1\) be two integers, and let \(\rho: \Gamma \to \GL_{\RR}(V)\) be a \(P_i\)-Anosov representation with boundary maps \(\xi_i : \partial_\infty \Gamma \to \mathrm{Gr}_i(V)\) and \(\xi_{n-i}: \partial_\infty \Gamma \to \mathrm{Gr}_{n-i}(V)\).
Let
\begin{equation*}
  \mathcal{B}_\rho = \bigcup_{\eta \in \partial_\infty \Gamma} \bigl\{ W\in \mathrm{Gr}_q(V) \mid \xi_i(\eta)\subset W \bigr\}.
\end{equation*}
Then the action of \(\Gamma\) on \(\mathrm{Gr}_q( V) \smallsetminus \mathcal{B}_\rho\) is cocompact.
\end{lemma}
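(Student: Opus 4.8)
I want to show that the action of $\Gamma$ on the open set $\mathrm{Gr}_q(V)\smallsetminus\mathcal{B}_\rho$ is \emph{cocompact}, i.e.\ that there is a compact subset $C$ of $\mathrm{Gr}_q(V)\smallsetminus\mathcal{B}_\rho$ whose $\Gamma$-orbit covers the whole space. Equivalently, if the action were \emph{not} cocompact, there would be a sequence $(W_n)_{n\in\NN}$ in $\mathrm{Gr}_q(V)\smallsetminus\mathcal{B}_\rho$ that escapes every compact subset of the quotient, meaning no subsequence of any $\Gamma$-translate $(\rho(\gamma_n)\cdot W_n)_n$ stays in a fixed compact subset of $\mathrm{Gr}_q(V)\smallsetminus\mathcal{B}_\rho$. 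I plan to derive a contradiction by choosing translates that converge, inside the \emph{compact} Grassmannian $\mathrm{Gr}_q(V)$, to a point that is forced to lie in $\mathcal{B}_\rho$ while simultaneously being a limit of points outside $\mathcal{B}_\rho$ — which is fine topologically, so the real content is to show that the \emph{limit subspace still contains some $\xi_i(\eta)$}, so that every representative of the orbit is driven into $\mathcal{B}_\rho$, contradicting the supposed escape.

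\textbf{Key steps.} First I would fix a word metric on $\Gamma$ and recall that, by the Anosov property, $\rho$ is $P_i$-divergent, so $\langle\varepsilon_i-\varepsilon_{i+1},\mu(\rho(\gamma))\rangle\to+\infty$ as $\gamma\to\infty$ in $\Gamma$. The strategy is to select, for each $W\in\mathrm{Gr}_q(V)\smallsetminus\mathcal{B}_\rho$, a group element $\gamma$ that pulls $W$ back into a fixed compact region; the natural candidate uses the contraction dynamics of $\rho(\gamma)$. Concretely, writing $\rho(\gamma)=k_\gamma\exp(\mu(\rho(\gamma)))\ell_\gamma$ in a Cartan decomposition, the element $\rho(\gamma)^{-1}$ strongly expands toward the flag $\Xi_i(\rho(\gamma)^{-1})$ and contracts the complementary directions; by Lemma~\ref{lem:anosov-lim-set} these attracting flags lie in the limit set $\xi_i(\partial_\infty\Gamma)$. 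The second step is to argue that if a sequence $(W_n)$ leaves every compact subset of the quotient, then after passing to a subsequence and translating by suitable $\rho(\gamma_n)$ with $\gamma_n\to\infty$, the images $\rho(\gamma_n)\cdot W_n$ converge in $\mathrm{Gr}_q(V)$ to a limit $W_\infty$. The third and central step is to identify $W_\infty$: using the attracting/repelling structure of the $P_i$-divergent sequence exactly as in the proof of Lemma~\ref{lem:propernessPk-seq} (tracking which coordinate directions survive after renormalization), one shows $W_\infty$ contains the attracting flag $L^+=\lim k_{\gamma_n}\cdot(\RR e_1\oplus\cdots\oplus\RR e_i)$, which belongs to $\xi_i(\partial_\infty\Gamma)$, so $W_\infty\in\mathcal{B}_\rho$. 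Finally, since $\mathcal{B}_\rho$ is determined by the \emph{closed} incidence condition $\xi_i(\eta)\subset W$ and $\xi_i(\partial_\infty\Gamma)$ is compact (continuous image of the compact $\partial_\infty\Gamma$), the complement $\mathrm{Gr}_q(V)\smallsetminus\mathcal{B}_\rho$ is open, and the convergence of the translates into $\mathcal{B}_\rho$ must be reconciled with the starting assumption to force the contradiction.

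\textbf{The main obstacle.} The delicate point is orchestrating the \emph{choice} of the translating elements $\gamma_n$ so that the translates both converge \emph{and} are driven onto $\mathcal{B}_\rho$, rather than merely converging to some arbitrary point of $\mathrm{Gr}_q(V)$. For cocompactness one cannot simply invoke a convergent subsequence: one must produce, for an arbitrary escaping sequence, elements $\gamma_n$ whose dynamics trap the translate near the limit set. I expect to realize this by a covering/compactness argument on $\mathrm{Gr}_q(V)\smallsetminus\mathcal{B}_\rho$ together with the uniform contraction estimates for Anosov representations: for each $W$ outside $\mathcal{B}_\rho$ one has $\xi_i(\eta)\not\subset W$ for all $\eta$, so $W$ is uniformly transverse to the repelling directions of the $\gamma$ whose attracting flag is near $\xi_i(\eta)$, and the renormalization computation then shows $\rho(\gamma)\cdot W$ is pushed a definite distance away from $\mathcal{B}_\rho$ toward a fixed compact core. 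Making the word ``uniformly'' precise — i.e.\ obtaining the compact core $C$ independently of $W$ — is the crux; this is where the transversality and dynamics-preserving properties of the Anosov boundary maps, via Lemma~\ref{lem:anosov-lim-set}, do the essential work.
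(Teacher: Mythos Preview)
Your proposal has a genuine gap: the argument you outline in steps 1--3 is essentially the \emph{properness} argument (Lemma~\ref{lem:propernessPk-seq}), not a cocompactness argument, and the two are not interchangeable. You say that after translating an escaping sequence $(W_n)$ by suitable $\rho(\gamma_n)$ and passing to a limit $W_\infty$, you will show $W_\infty\in\mathcal{B}_\rho$ and declare this a contradiction. But it is not: an ``escaping'' sequence is precisely one whose $\Gamma$-orbit accumulates only on~$\mathcal{B}_\rho$, so finding translates that converge into~$\mathcal{B}_\rho$ confirms rather than contradicts the hypothesis. What cocompactness requires is the opposite conclusion: for every $W\in\mathrm{Gr}_q(V)\smallsetminus\mathcal{B}_\rho$ one can translate $W$ \emph{away} from~$\mathcal{B}_\rho$ into a fixed compact core. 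You gesture at this in your final paragraph (``pushed a definite distance away from~$\mathcal{B}_\rho$''), but that sentence contradicts your steps 2--3 and you give no mechanism for producing the expansion or the uniform compact core.

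The paper's proof proceeds quite differently. It first establishes an \emph{expansion} property near the bad set (Proposition~\ref{prop:P1-expanding}): for each $\eta\in\partial_\infty\Gamma$ and each $c>1$ there is a $\gamma\in\Gamma$ and a neighborhood $U$ of the fiber $\mathcal{K}_{\xi_1(\eta)}$ on which $\rho(\gamma)$ expands distances to every fiber $\mathcal{K}_L\subset U$ by a factor at least~$c$. This is proved via quantitative Cartan estimates along a quasigeodesic ray converging to~$\eta$. Cocompactness then follows from a Sullivan-type criterion (Lemma~\ref{lem:compactness-metric}): if some orbit escaped, one would pick points $z_n$ nearly maximizing $\dist(\cdot,E)$ on their orbit with $\dist(z_n,E)\to 0$, and the expansion would force $\dist(\rho(\gamma)\cdot z_n,E)\geq c\,\dist(z_n,E)$, contradicting near-maximality. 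Finally, the case of general $i$ is reduced to $i=1$ via the exterior power $\bigwedge^i$, using that $E\subset W\Leftrightarrow\bigwedge^i E\subset\bigwedge^i W$ --- a reduction you do not mention. The missing idea in your proposal is precisely this expansion property and the Sullivan-type argument that converts it into cocompactness.
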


The rest of this section is devoted to proving Lemma~\ref{lem:compactness-GLV}.
Let us first introduce some notation.
In a metric space \((X, d_X)\) the distance from a point to a set will be denoted by \(\dist_X\) and the Hausdorff distances by \(\haus_X\).
For lines \(L, L'\in\PP(V)\) with respective direction vectors \(v\) and~\(v'\), we set
\begin{equation*}
  d_{\PP(V)}( L, L') := | \sin \measuredangle (v,v') |.
\end{equation*}
For \(W, W' \in \mathrm{Gr}_q(V)\), we set
\begin{equation*}
  d_{\mathrm{Gr}_q(V)}( W,W') = \haus_{\PP(V)} ( \PP(W), \PP(W')).
\end{equation*}
For \(L\in\PP(V)\), we set
\begin{equation*}
  \mathcal{K}_L := \bigl\{ W \in \mathrm{Gr}_q(V) \mid L \subset W\bigr\}.
\end{equation*}
Note that $\mathcal{K}_{g\cdot L} = g\cdot\mathcal{K}_L$ for all $g\in\GL_{\RR}(V)$.
The following identity is easily established:
\begin{equation}
  \label{eq:dist}
  \dist_{\mathrm{Gr}_q(V)}( W, \mathcal{K}_L) = \dist_{\PP(V)}( L, \PP(W)).
\end{equation}
The following result is a consequence of estimates established in \cite{GGKW_anosov}.

\begin{proposition} \label{prop:P1-expanding}
  Let \(V\) be a real vector space of dimension~$n$, let \(\rho: \Gamma \to \GL_{\RR}(V)\) be a \(P_1\)-Anosov representation with boundary map \(\xi_1: \partial_\infty \Gamma \to \PP(V)\), and let \(1\leq q \leq n-1\).
  Then for any \(\eta\in\partial_\infty \Gamma\) and any $c>1$, there exist \(\gamma\in \Gamma\) and an open subset \(U\) of \(\mathrm{Gr}_q(V)\) containing \(\mathcal{K}_{\xi_1(\eta)}\) such that
  \begin{equation*}
    \dist_{\mathrm{Gr}_q(V)} \bigl( \rho(\gamma)\cdot W,  \rho(\gamma)\cdot \mathcal{K}_L \bigr) \geq c\, \dist_{\mathrm{Gr}_q(V)}( W, \mathcal{K}_L )
  \end{equation*}
  for all \(W\in U\) and \(L\in\PP(V)\) with \(\mathcal{K}_L\subset U\).
\end{proposition}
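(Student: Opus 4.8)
The plan is to reduce the statement to the known expansion property of $P_1$-Anosov representations on projective space, and then transport it to the Grassmannian $\mathrm{Gr}_q(V)$ via the identity \eqref{eq:dist}. First I would recall from \cite{GGKW_anosov} the key fact underlying Anosov representations: if $\rho : \Gamma \to \GL_\RR(V)$ is $P_1$-Anosov with boundary maps $\xi_1 : \partial_\infty\Gamma \to \PP(V)$ and $\xi_{n-1} : \partial_\infty\Gamma \to \PP(V^\ast)$, then $\rho(\Gamma)$ acts on $\PP(V)$ with a \emph{uniform North-South type dynamics}; in particular, for every $\eta \in \partial_\infty\Gamma$ and every $c > 1$, there exist $\gamma \in \Gamma$ and a neighborhood $U_0$ of $\xi_1(\eta)$ in $\PP(V)$ (disjoint from the projective hyperplane $\PP(\xi_{n-1}(\eta'))$ for suitable $\eta'$) on which $\rho(\gamma)$ expands the metric $d_{\PP(V)}$ by a factor at least $c$. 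Here one takes $\gamma = \gamma_0^{\,m}$ for a primitive hyperbolic element $\gamma_0$ with attracting fixed point close to $\eta$ and $m$ large, or more robustly one invokes the Cartan-projection estimates of \cite{GGKW_anosov} directly: the contraction/expansion rates are governed by $e^{\langle \varepsilon_1 - \varepsilon_2,\ \mu(\rho(\gamma))\rangle}$, which tends to $+\infty$ along $\Gamma$ by $P_1$-divergence.

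Next I would pass from points to $q$-planes. Fix $\eta$ and $c>1$, and let $\gamma \in \Gamma$ and $U_0 \subset \PP(V)$ be as above, with $\rho(\gamma)$ $c$-expanding on $U_0$ for $d_{\PP(V)}$. Define
\[
  U := \bigl\{ W \in \mathrm{Gr}_q(V) \mid \PP(W) \subset U_0 \bigr\}.
\]
This is open in $\mathrm{Gr}_q(V)$ and contains $\mathcal{K}_{\xi_1(\eta)}$ provided $U_0$ is chosen small enough around $\xi_1(\eta)$; more care is needed here, since $\mathcal{K}_{\xi_1(\eta)}$ is a whole sub-Grassmannian and its elements $W$ have $\PP(W)$ not contained in any small ball. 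The correct choice is instead to let $U$ be a neighborhood of the compact set $\mathcal{K}_{\xi_1(\eta)}$ on which the relevant distances are controlled: concretely, for $W \in U$ the nearest point of $\PP(W)$ to $\xi_1(\eta)$ lies in $U_0$, and one only needs the expansion estimate at \emph{that} nearest point. Using \eqref{eq:dist}, $\dist_{\mathrm{Gr}_q(V)}(W,\mathcal{K}_L) = \dist_{\PP(V)}(L,\PP(W))$ is realized by a pair $(L, \ell)$ with $\ell \in \PP(W)$; applying $\rho(\gamma)$ and using $\mathcal{K}_{\rho(\gamma)\cdot L} = \rho(\gamma)\cdot\mathcal{K}_L$ together with $c$-expansion of $d_{\PP(V)}$ at $\ell$ gives
\[
  \dist_{\mathrm{Gr}_q(V)}\bigl(\rho(\gamma)\cdot W, \rho(\gamma)\cdot\mathcal{K}_L\bigr)
  \ \geq\ c\,\dist_{\PP(V)}(L,\PP(W))
  \ =\ c\,\dist_{\mathrm{Gr}_q(V)}(W,\mathcal{K}_L),
\]
at least once one checks that the minimizing line $L$ for the image configuration can be taken to be $\rho(\gamma)\cdot L'$ for some $L'$ with $\mathcal{K}_{L'}\subset U$, which holds because $\mathcal{K}_L \subset U$ forces $L \in U_0$ and $\rho(\gamma)\cdot U_0$ is controlled. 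One has to be slightly careful that the distance $\dist_{\mathrm{Gr}_q(V)}$, defined via the Hausdorff distance of projectivizations, is computed the right way after applying $\rho(\gamma)$; but the Hausdorff-to-pointwise identity \eqref{eq:dist} handles exactly this.

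The main obstacle, as the discussion above indicates, is the \emph{non-local} nature of $\mathcal{K}_{\xi_1(\eta)}$: it is not a small subset of $\mathrm{Gr}_q(V)$, so one cannot simply shrink a ball around it and quote a pointwise expansion estimate. The resolution is to phrase everything through the line $\ell \in \PP(W)$ realizing the distance to $L$ and to require only that this realizing line lie in the good region $U_0 \subset \PP(V)$ where $\rho(\gamma)$ expands; the set $U$ in the statement is then defined as the preimage, under the ``nearest-point to $\xi_1(\eta)$'' map, of a sufficiently small neighborhood of $\xi_1(\eta)$, intersected with the condition that all the lines $L$ with $\mathcal{K}_L \subset U$ also fall in $U_0$. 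A second, minor point is to ensure that the expansion factor can be pushed above any prescribed $c > 1$ uniformly; this follows because along any $P_1$-divergent sequence in $\rho(\Gamma)$ the singular-value gap $e^{\langle \varepsilon_1 - \varepsilon_2,\ \mu(\rho(\gamma))\rangle} \to +\infty$, so the standard estimates of \cite{GGKW_anosov} yield expansion constants tending to infinity, and we pick $\gamma$ accordingly. Once these two points are settled, the proof is a direct transcription of the projective expansion statement to the Grassmannian.
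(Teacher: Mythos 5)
Your reduction to projective dynamics via \eqref{eq:dist} is the right starting point, and you correctly identify the main obstacle: $\mathcal{K}_{\xi_1(\eta)}$ is a whole sub-Grassmannian, so for $W$ near $\mathcal{K}_{\xi_1(\eta)}$ the set $\PP(W)$ is a projective $(q-1)$-plane that cannot be confined to a small neighborhood $U_0$ of $\xi_1(\eta)$ on which $\rho(\gamma)$ expands. But your proposed resolution --- ``one only needs the expansion estimate at the nearest point'' --- does not close the gap. The quantity to be bounded from below is $\dist_{\PP(V)}\bigl(\rho(\gamma)\cdot L,\ \PP(\rho(\gamma)\cdot W)\bigr)=\inf_{m\in\PP(W)} d_{\PP(V)}\bigl(\rho(\gamma)\cdot L,\ \rho(\gamma)\cdot m\bigr)$, an infimum over \emph{all} of $\PP(W)$. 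Expansion at the single line $\ell$ realizing $\dist_{\PP(V)}(L,\PP(W))$ only gives $d_{\PP(V)}(\rho(\gamma)\cdot L,\rho(\gamma)\cdot\ell)\geq c\,d_{\PP(V)}(L,\ell)$, i.e.\ a lower bound on one particular competitor, hence at best an upper bound for the infimum; it says nothing about the infimum itself. The minimizer for the image configuration can be a different point $m\in\PP(W)$ lying near the repelling hyperplane of $\rho(\gamma)$, where no expansion holds (no projective linear map expands on all of a positive-dimensional projective subspace), and there $d_{\PP(V)}(\rho(\gamma)\cdot L,\rho(\gamma)\cdot m)$ can be far smaller than $c\,d_{\PP(V)}(L,m)$. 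Invoking \eqref{eq:dist} does not ``handle exactly this'': it converts the Grassmannian distance into a point-to-subset distance in $\PP(V)$, but the subset is still large.

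The paper supplies exactly the two ingredients your sketch is missing. First, it runs the argument in the \emph{contracting} direction: writing $\rho(\gamma_n)=k_na_n\ell_n$ along a quasi-geodesic ray converging to~$\eta$, it proves that $\rho(\gamma_n)$ contracts $\dist(\cdot,\mathcal{K}_L)$ on an explicit set $\mathcal{U}_n=\{W \mid \dist_{\PP(V)}(\ell_n^{-1}\cdot x_0,\PP(W))<1-\delta/2\}$ and only then sets $\gamma=\gamma_n^{-1}$ and $U=\rho(\gamma_n)\cdot\mathcal{U}_n$. In that direction one must \emph{upper}-bound an infimum, which can legitimately be done by restricting to the subset $\PP(W')\cap\overline{B}_{\PP(V)}(x_0,1-\epsilon)$ on which $a_n$ is uniformly contracting. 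Second, it proves a quantitative comparison (estimate~(a) in the paper's proof): for $L'$ with $d_{\PP(V)}(x_0,L')<1-\delta/2$, the distance from $L'$ to $\PP(W')\cap\overline{B}_{\PP(V)}(x_0,1-\epsilon)$ is at most $M$ times the distance from $L'$ to all of $\PP(W')$, with $\epsilon,M$ uniform. This is the step that controls where the minimizer lives and lets the contraction constant absorb the loss~$M$; it is precisely what is absent from your plan. Without an analogue of this comparison, the nearest-point argument does not yield the claimed inequality.
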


\begin{proof}[Proof of Proposition~\ref{prop:P1-expanding}]
Let \((e_1, \dots, e_n)\) be a basis of \(V\) so that $G=\GL_{\RR}(V)$ admits a Cartan decomposition $G=K \exp(\overline{\aaa}^+) K$ as in Example~\ref{sec:gener-line-groups}.
Fix $\eta\in\partial_{\infty}\Gamma$ and let \((\gamma_n)_{n\in \NN}\) be a quasigeodesic ray in \(\Gamma\) converging to~\(\eta\).
For any $n\in\NN$ we write $\rho(\gamma_n) = k_n a_n \ell_n \in K \exp(\overline{\aaa}^+) K$. 
Let \(x_0 =\RR e_1 \in \PP(V)\).
By \cite[Th.\,1.3.(1)$\Rightarrow$(4) \& Th.\,5.1.(1)]{GGKW_anosov},
\begin{equation*}
 \xi_1( \eta) = \lim_{n\to \infty} k_n \cdot x_0.
\end{equation*}
By \cite[\S\,5.4.1]{GGKW_anosov} (see the proof of Prop.\,5.11 of \cite{GGKW_anosov}, establishing (5.8) in Lem.\,5.12), there exist \(\delta>0\) and $N\in\NN$ such that for all $n\geq N$,
\begin{equation*}
  \limsup_{m\to \infty} d_{\PP(V)} \bigl( x_0, (a_{n}^{-1} k_{n}^{-1} k_{n+m} a_n) \cdot x_0\bigr ) \leq 1-\delta.
\end{equation*}
Since \(a_n \cdot x_0 = x_0\) and since the metric $d_{\PP(V)}$ is invariant under $\ell_n^{-1}\in K$, we deduce that
\[ d_{\PP(V)}\big( \ell_{n}^{-1}\cdot x_0, \rho(\gamma_n)^{-1} \cdot \xi_1(\eta)\big) \leq 1- \delta \]
for all $n\geq N$.
We set
\[ \mathcal{U}_n = \left\{ W \in \mathrm{Gr}_q(V) \mid \dist_{\PP(V)} \big(\ell_{n}^{-1} \cdot x_0, \PP(W)\big) < 1 - \frac{\delta}{2}\right\}. \]
By~\eqref{eq:dist}, for any $L\in\PP(V)$ we have
\begin{equation} \label{eqn:KLsubsetUn}
\mathcal{K}_L \subset \mathcal{U}_n\, \Longleftrightarrow\, d_{\PP(V)}( \ell_{n}^{-1} \cdot x_0, L) < 1- \frac{\delta}{2}.
\end{equation}
Therefore, \(\mathcal{K}_{\rho(\gamma_n)^{-1}\cdot \xi_1(\eta)}\subset \mathcal{U}_n\)
for all $n\geq N$.
To conclude the proof, it is enough to establish the following.

\begin{claim} \label{cla:expansion}
For any \(c>1\), there exists \(n_c\in\NN\) such that for all \(n\geq n_c\), all \(W\in\mathcal{U}_n\), and all \(L\in \PP(V)\) with \(\mathcal{K}_L\subset\mathcal{U}_n\),
  \begin{equation} \label{item:WL}
    \dist_{\mathrm{Gr}_q(V)} ( W, \mathcal{K}_{ L}) \geq c \, \dist_{\mathrm{Gr}_q(V)} \bigl( \rho(\gamma_n) \cdot W, \rho(\gamma_n) \cdot \mathcal{K}_{L}\bigr).
  \end{equation}
\end{claim}

\noindent
Indeed, Proposition~\ref{prop:P1-expanding} follows from Claim~\ref{cla:expansion} by setting \(\gamma = \gamma_{n}^{-1}\) and \(U=\rho(\gamma_n)\cdot\mathcal{U}_n\) for $n\geq\max(N,n_c)$.

We now prove Claim~\ref{cla:expansion}.
It is a consequence of the following two elementary estimates:
\begin{enumerate}[(a)]
\item\label{item:a} There exist \(\epsilon, M >0\) such that for any \(L'\in \PP(V)\) and \(W'\in \mathrm{Gr}_q(V)\), if
\(d_{\PP(V)}(x_0, L') < 1- \delta/2\), then
  \begin{equation*}
    \dist_{\PP(V)}\bigl (L', \PP(W') \cap \overline{B}_{\PP(V)}(x_0, 1- \epsilon)\bigr) \leq M\, \dist_{\PP(V)}(L', \PP(W')).
  \end{equation*}
\item\label{item:b} For any \(\epsilon>0\) and \(c'>0\), there exists \(n_{\epsilon,c'}\) such that \(a_n|_{\overline{B}_{\PP(V)}(x_0, 1- \epsilon)}\) is \(c'\)-contracting for all \(n\geq n_{\epsilon,c'}\).
\end{enumerate}
Indeed, fix \(c>1\).
For $n\geq N$, consider \(W\in \mathcal{U}_n\) and \(L\in \PP(V)\) with \(\mathcal{K}_L\subset \mathcal{U}_n\).
Set \(W' = \ell_n \cdot W\) and \(L' = \ell_n \cdot L\).
By \eqref{eqn:KLsubsetUn}, we have \( d_{\PP(V)}(x_0, L') < 1 - \frac{\delta}{2}\).
By \eqref{item:a} and~\eqref{item:b}, if $n\geq n_{\epsilon,c/M}$, then
\begin{align*}
    \dist_{\PP(V)}  ( a_n \cdot L', &\, a_n \cdot \PP(W'))\\
   \leq &\, \dist_{\PP(V)}\big( a_n \cdot L', a_n \cdot \big(\PP(W') \cap \overline{B}_{\PP(V)}(x_0, 1- \epsilon)\big)\big)\\
   \leq &\, \frac{c}{M} \ \dist_{\PP(V)} \big( L', \big(\PP(W') \cap \overline{B}_{\PP(V)}(x_0, 1- \epsilon)\big)\big)\\
   \leq &\, c \ \dist_{\PP(V)} ( L', (\PP(W')).
\end{align*}
On the other hand, by \eqref{eq:dist} and the fact that the metric $d_{\mathrm{Gr}_q(V)}$ is invariant under~$K$,
\begin{equation*}
  \left\{\begin{array}{l}
 \dist_{\mathrm{Gr}_q(V)}( W, \mathcal{K}_L) = \dist_{\PP(V)}( L', \PP(W')),\\
  \dist_{\mathrm{Gr}_q(V)}( \rho(\gamma_n)\cdot W, \rho(\gamma_n)\cdot\mathcal{K}_{L}) = \dist_{\PP(V)}( a_n \cdot L', a_n \cdot \PP(W')).
  \end{array}\right.
\end{equation*}
This concludes the proof of Claim~\ref{cla:expansion}.

For the sake of completeness, we now give a proof of \eqref{item:a} and~\eqref{item:b} above: \eqref{item:b} is a consequence of the fact that \(\langle \varepsilon_1 - \varepsilon_2, \log a_n \rangle \to +\infty\) (since $\rho$ is $P_1$-divergent by Definition~\ref{defi:ano_theta} of a $P_1$-Anosov representation).
For \eqref{item:a} we argue by contradiction: suppose that there are sequences \((\epsilon_m)_{m\in \NN} \in (\RR_{>0})^\NN\), \((M_m)_{m\in \NN} \in (\RR_{>0})^\NN\), \((L'_m)_{m\in \NN} \in \PP(V)^\NN\), and \((W'_m)_{m\in \NN} \in \mathrm{Gr}_q(V)^\NN\) such that \((\epsilon_m)_{m\in \NN}\) converges to \(0\), such that \((M_m)_{m\in\NN}\) diverges to \(+\infty\), and such that for any \(m\) we have \(d_{\PP(V)}( x_0, L'_m) < 1- \delta/2\) and
\begin{equation} \label{eqn:contradict-Mn}
\dist_{\PP(V)} \bigl( L'_m, \PP(W'_m) \cap \overline{B}_{\PP(V)}( x_0, 1- \epsilon_m)\bigr) > M_m \, \dist_{\PP(V)}( L'_m, \PP(W'_m)).
\end{equation}
For any~\(m\), the left-hand side of \eqref{eqn:contradict-Mn} is $\leq 1$, hence \(\dist_{\PP(V)}( L'_m, \PP(W'_m)) \leq 1/M_m\).
Let \(D_m\subset W'_m\) be a line such that \(d_{\PP(V)}(L'_m, D_m) = \dist_{\PP(V)}(L'_m, \PP(W'_m))\).
Then \( d_{\PP(V)}(x_0,D_m) \leq d_{\PP(V)}(x_0,L'_m) + d_{\PP(V)}(L'_m, D_m) \leq 1 - \delta/2 + 1/M_m\), and so \(D_m\) belongs to \(\overline{B}_{\PP(V)}( x_0, 1- \epsilon_m)\) for large enough~$m$, contradicting \eqref{eqn:contradict-Mn}.
\end{proof}

Lemma~\ref{lem:compactness-GLV} is a consequence of Proposition~\ref{prop:P1-expanding} and of the following dynamical compactness criterion from \cite{KapovichLeebPorti}, inspired by Sullivan's dynamical characterization of convex cocompactness \cite{Sullivan_85}.
We recall the proof for the reader's convenience.

\begin{lemma}[{\cite[Prop.\,2.5]{KapovichLeebPorti}}]
  \label{lem:compactness-metric}
  Let \(\Lambda\) be a group acting by homeomorphisms on a compact metric space \((Z, d_Z)\) and on a compact set \(D\).
  Let \(E\) be a closed \(\Lambda\)-invariant subset of~$Z$ fibering equivariantly over~\(D\),
  with fibers denoted by \(E_d\), \(d\in D\).
  Suppose that for any \(d\in D\) there exist an element \(\gamma\in \Lambda\), an open set \(U\subset Z\) containing \(E_d\), and a constant \(c>1\) such that
  \begin{equation} \label{eqn:distZ-E}
  \dist_Z ( \gamma \cdot z, \gamma \cdot E_{d'}) \geq c\, \dist_Z( z, E_{d'})
  \end{equation}
  for all \(z\in U\) and \(d'\in D\) with \(E_{d'}\subset U\).
Then the action of \(\Lambda\) on \(Z \smallsetminus E\) is cocompact.
\end{lemma}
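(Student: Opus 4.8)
The plan is to argue by contradiction using a compactness/diagonal argument. Suppose the action of $\Lambda$ on $Z\smallsetminus E$ is not cocompact. Then there is a sequence $(z_n)_{n\in\NN}$ in $Z\smallsetminus E$ such that the $\Lambda$-orbit of each $z_n$ eventually escapes every fixed compact subset of $Z\smallsetminus E$; equivalently, $\dist_Z(z_n, E)\to 0$ while no $\Lambda$-translate of $z_n$ can be pushed a definite distance away from $E$. More precisely, since $Z$ is compact we may extract so that $z_n \to z_\infty \in E$, say $z_\infty \in E_{d_\infty}$ for some $d_\infty\in D$ (using that $E$ fibers over the compact set $D$, after a further extraction of the images in $D$). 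The goal is to contradict the hypothesis at the point $d_\infty$.

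First I would apply the hypothesis at $d=d_\infty$: there exist $\gamma\in\Lambda$, an open set $U\supset E_{d_\infty}$, and $c>1$ such that \eqref{eqn:distZ-E} holds for all $z\in U$ and all $d'\in D$ with $E_{d'}\subset U$. Since $z_n\to z_\infty\in E_{d_\infty}\subset U$, we have $z_n\in U$ for $n$ large. The key point is then an \emph{iteration} step: I want to apply \eqref{eqn:distZ-E} not just to $\gamma$ but to powers, or rather to a sequence of elements, each time expanding the distance to $E$ by a factor $c>1$, as long as the point stays inside $U$. The standard mechanism is the following: by upper semicontinuity of $d'\mapsto \dist_Z(z,E_{d'})$ and compactness of $D$, one shows that for $n$ large the fiber $E_{d_n}$ through (or near) $z_n$, and more importantly the relevant fibers $E_{d'}$ that stay in $U$, can be controlled; and $\dist_Z(\gamma\cdot z_n, E) \geq \dist_Z(\gamma\cdot z_n, \gamma\cdot E_{d'}) \geq c\,\dist_Z(z_n, E_{d'})$ for a suitable $d'$ realizing (up to a controlled error) the distance from $z_n$ to $E$. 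Iterating, as long as $\gamma^{k}\cdot z_n$ remains in $U$ one gets $\dist_Z(\gamma^k\cdot z_n, E)\geq c^k\dist_Z(z_n,E)$; since $\dist_Z(z_n,E)>0$ and $c>1$, after finitely many steps $k_n$ (with $k_n\to\infty$ as $n\to\infty$ because $\dist_Z(z_n,E)\to 0$) the point $\gamma^{k_n}\cdot z_n$ is pushed outside $U$, hence a definite distance $\geq \varepsilon_0$ from $E_{d_\infty}$, where $\varepsilon_0>0$ depends only on $U$. One checks this $\gamma^{k_n}\cdot z_n$ lies in a fixed compact subset of $Z\smallsetminus E$ independent of $n$, contradicting the escape property of $(z_n)$.

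The main obstacle is making the iteration rigorous, because the hypothesis only gives an expanding element and open set \emph{at the single point} $d_\infty$, whereas after one application of $\gamma$ the fiber containing the image point has moved to $\gamma\cdot E_{d_\infty}=E_{\gamma\cdot d_\infty}$, which need not lie in $U$; so one cannot naively reapply the same $\gamma$. The correct formulation of \cite[Prop.\,2.5]{KapovichLeebPorti} handles this by taking $U$ to be an open neighborhood and iterating the estimate \emph{only until the orbit point leaves $U$} — one does not need the orbit to return, merely to exit $U$ while staying in $Z\smallsetminus E$ at a controlled distance. So the careful bookkeeping is: (i) choose $\varepsilon_0>0$ with $\overline{B}_Z(E_{d_\infty},\varepsilon_0)\subset U$; (ii) for each large $n$, let $k_n$ be the first $k\geq 0$ with $\gamma^{k}\cdot z_n \notin \overline{B}_Z(E_{d_\infty},\varepsilon_0/2)$ — finite by the expansion; (iii) observe $\gamma^{k_n-1}\cdot z_n$ and hence $\gamma^{k_n}\cdot z_n$ lie in a compact subset of $U$ at distance $\geq \varepsilon_0/2$ (minus a uniformly small error) from $E$, hence in a fixed compact subset $\mathcal{C}\subset Z\smallsetminus E$; (iv) since $k_n\to\infty$, the elements $\gamma^{k_n}$ exhaust no finite set, so $\gamma^{k_n}\cdot z_n\in\mathcal{C}$ shows the orbits $\Lambda\cdot z_n$ do not escape $\mathcal{C}$, contradiction. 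The only analytic input needed is the upper semicontinuity of $(z,d')\mapsto$ the sign of $\dist_Z(z,E_{d'})$ being nonzero on an open set, plus compactness of $Z$ and $D$; everything else is the geometric iteration just described.
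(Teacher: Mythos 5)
There is a genuine gap in the iteration step, and it is precisely the obstacle you flagged without actually resolving it. The hypothesis \eqref{eqn:distZ-E} is a \emph{one-step} estimate: it applies to the element $\gamma$, to points $z\in U$, and to fibers $E_{d'}$ \emph{contained in} $U$. After one application, the point $\gamma\cdot z_n$ is close to $\gamma\cdot E_{d'}=E_{\gamma\cdot d'}$, and neither this fiber nor the point need lie in $U$; so the claimed inequality $\dist_Z(\gamma^k\cdot z_n,E)\geq c^k\dist_Z(z_n,E)$ ``as long as $\gamma^k\cdot z_n$ remains in $U$'' is not justified even for $k=2$ --- the condition that must persist is not that the \emph{point} stays in $U$ but that the \emph{nearest fiber} stays in $U$, and the dynamics of $\gamma$ on $D$ gives you no control over that. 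Your proposed fix (``iterate only until the orbit point leaves $U$'') addresses the wrong condition. A multi-step escape argument can be made to work, but it requires extra structure you have not set up: a finite subcover of $D$ by expansion data $(\gamma_i,U_i,c_i)$, a uniform lower bound $c_{\min}>1$ on the expansion factors, and a uniform threshold $\delta_0>0$ below which every point near a fiber $E_{d'}$ lies in the corresponding $U_i$; at each step one must switch to the $\gamma_i$ adapted to the current nearest fiber. None of this bookkeeping is in your plan.

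The paper avoids iteration entirely by a Sullivan-type variational trick. Given a bad orbit contained in $\{0<\dist_Z(\cdot,E)<\epsilon_n\}$, it chooses $z_n$ on that orbit with $\dist_Z(z_n,E)$ within a factor $(1+\epsilon_n)$ of the supremum of $\dist_Z(\cdot,E)$ over the orbit, so that \emph{no} element of $\Lambda$ can increase the distance to $E$ by more than $(1+\epsilon_n)$. After extracting $z_n\to z_\infty\in E_d$ and taking $d_n$ realizing $\dist_Z(\gamma\cdot z_n,E)=\dist_Z(\gamma\cdot z_n,\gamma\cdot E_{d_n})$ (so $d_n\to d$ and $E_{d_n}\subset U$ for large $n$), a \emph{single} application of \eqref{eqn:distZ-E} gives $\dist_Z(\gamma\cdot z_n,E)\geq c\,\dist_Z(z_n,E)\geq \frac{c}{1+\epsilon_n}\dist_Z(\gamma\cdot z_n,E)$, which is absurd for large $n$. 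You should replace your iteration by this one-step maximality argument, or else supply the uniform chain-of-expansions machinery sketched above.
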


\begin{proof}[Proof of Lemma~\ref{lem:compactness-metric}]
Suppose by contradiction that the action is not cocompact, and let \((\epsilon_n)_{n\in \NN}\) be a sequence converging to~\(0\).
For any \(n\in\NN^*\), the set \(C_n = \{ z\in Z \mid \dist_Z( z, E) \geq \epsilon_n\}\) is compact, hence there exists a \(\Lambda\)-orbit contained in \(Z \smallsetminus (C_n \cup E)\); by approaching the supremum of $\dist_Z(\cdot,E)$ on this orbit, we find an element \(z_n\in Z\) such that $0 < \dist_Z(z_n , E) \leq \epsilon_n$ and
\[ \dist_Z( \gamma\cdot z_n, E) \leq (1+\epsilon_n) \, \dist_Z(z_n, E) \quad \forall \gamma \in \Lambda.\]
Up to extracting, we may assume that \((z_n)_{n\in\NN^*}\) converges to some \(z_{\infty}\in E\), belonging to a fiber $E_d$, $d\in D$.
Let \((\gamma,U,c)\) be such that \eqref{eqn:distZ-E} holds for all \(z\in U\) and \(d'\in D\) with \(E_{d'}\subset U\).
For any $n\in\NN^*$, consider $d_n\in D$ such that \(\dist_Z(\gamma\cdot z_n, \gamma\cdot E_{d_n})\) is minimal, equal to \(\dist_Z(\gamma\cdot z_n, E)\); the sequence $(d_n)_{n\in\NN^*}$ converges to~$d$ since $(z_n)_{n\in\NN^*}$ converges to $z_{\infty}$.
For large enough~\(n\) we have \(E_{d_n} \subset U\), and so
\begin{align*}
  \dist_Z( \gamma \cdot z_n, E) & = \dist_Z( \gamma \cdot z_n, \gamma \cdot E_{d_n}) \\
& \geq c\, \dist_Z( z_n, E_{d_n}) \geq c\, \dist_Z( z_n, E)\\
  & \geq \frac{c}{1+\epsilon_n}\, \dist_Z( \gamma\cdot z_n, E).
\end{align*}
This is impossible since \(c/(1+\epsilon_n) > 1\) for large enough~\(n\).
\end{proof}

\begin{proof}[Proof of Lemma~\ref{lem:compactness-GLV}]
If $i=1$, then Lemma~\ref{lem:compactness-GLV} is an immediate consequence of Proposition~\ref{prop:P1-expanding} and Lemma~\ref{lem:compactness-metric} with \((Z,E,D) = (\mathrm{Gr}_q(V),\mathcal{B}_{\rho},\partial_{\infty}\Gamma)\).
Suppose now that $i$ is arbitrary and let \(\tau_i : \GL_{\RR}(V) \to \GL_{\RR}( \bigwedge^i V)\) be the homomorphism coming from the action of \(\GL_{\RR}(V)\) on the \(i\)-th exterior power of~\(V\).
By Proposition~\ref{prop:theta-comp-Anosov}, the representation \(\rho\) is \(P_i\)-Anosov if and only if \(\tau_i\circ\rho: \Gamma \to \GL_\RR( \bigwedge^i V)\) is \(P_1\)-Anosov.
The map
\begin{align*}
\mathrm{Gr}_q(V) & \longrightarrow
                   \mathrm{Gr}_{\binom{q}{i}}\left({\bigwedge}^i V\right) \\ W & \longmapsto \bigwedge^i W
\end{align*}
is \(\tau_i\)-equivariant and injective.
Moreover, for \(E\in \mathrm{Gr}_i(V)\) and \(W\in \mathrm{Gr}_q(V)\) we have \(E\subset W\) if and only if \(\bigwedge^i E\subset \bigwedge^i W\).
Therefore Lemma~\ref{lem:compactness-GLV} for $\rho$ follows from Lemma~\ref{lem:compactness-GLV} for $\tau_i\circ\rho$ with $i=1$.
\end{proof}

\begin{proof}[Proof of Theorem~\ref{thm:comp_loc_sym_spc_opq}]
  By Proposition~\ref{prop:propernessPk}, the action of \(\Gamma\) on \(\Omega= \overX_{b} \smallsetminus \mathcal{N}_\rho\) is properly discontinuous since \(\mathcal{N}_\rho = \mathcal{W}_\rho\) for a \(P_1(b)\)-Anosov representation. 
  As \( \mathcal{N}_\rho = \mathcal{V}_\rho\), cocompactness of the action follows from Lemma~\ref{lem:cocompactness}.
\end{proof}

\section{Compactifying Riemannian locally symmetric spaces:\\ the general case}
\label{sec:comp-riem-locally-gal}

We now use the compactification of Riemannian locally symmetric spaces of indefinite orthogonal groups constructed in Theorem~\ref{thm:comp_loc_sym_spc_opq}, Proposition~\ref{prop:propernessPk}, and Lemma~\ref{lem:cocompactness} to prove Theorem~\ref{thm:main} in Section~\ref{subsec:proof-thm-main}, Theorem~\ref{thm:tame} in Section~\ref{subsec:tameness}, and a more precise version of Theorem~\ref{thm:main_max_Satake} in Section~\ref{sec:maxSatake}.
The case of complex orthogonal groups is investigated in Section~\ref{sec:compl-orth-groups}.

\subsection{The subalgebra compactification}
\label{sec:subalg-comp}

Let \(G\) be a real semisimple Lie group.
It acts on its Lie algebra~$\g$ via the adjoint action, preserving the Killing form~$\kl$ (see Section~\ref{sec:adjoint}).
As in Section~\ref{sec:comp-anos-locally}, the Riemannian symmetric space $X_{\kl}$ of the orthogonal group $\OO(\kl)$ admits a realization as an open subset in the Grassmannian $\mathrm{Gr}_{\dim\mathfrak{k}}(V)$, namely as the set of $W\in \mathrm{Gr}_{\dim\mathfrak{k}}(V)$ such that the restriction of $\kl$ to $W\times W$ is negative definite.
Its closure
\begin{equation*}
\overX_{\kl} = \{ W\in \mathrm{Gr}_{\dim\mathfrak{k}}(\g) \mid \kl(x,x)\leq 0\quad \forall x\in W\}
\end{equation*}
is a compactification of~$X_{\kl}$.

The element \(\mathfrak{r}_\emptyset := \mathfrak{k}\) belongs to \(X_{\kl} \subset \mathrm{Gr}_{\dim \mathfrak{k}}( \mathfrak{g})\) and its stabilizer in \(G\) is~\(K\).
Thus the orbit \(\Ad(G)\cdot \mathfrak{r}_\emptyset\) in \(X_{\kl}\) identifies with the Riemannian symmetric space \(X= G/K\).
The closure \(\overX^{sba}\) of \(X \simeq \Ad(G)\cdot \mathfrak{r}_\emptyset\) in \(\overX_{\kl}\) is called the \emph{subalgebra compactification} of~\(X\).

\begin{proposition}[{\cite[Th.\,1.1]{Ji_Lu}}]\label{prop:Satake_subalgebra}
The subalgebra compactification of~$X$ is isomorphic to the maximal Satake compactification of~$X$. 
\end{proposition}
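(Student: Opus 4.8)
The plan is to identify $\overX^{sba}$ with the maximal Satake compactification of $X$ by matching their $G$-orbit stratifications and then their topologies, along the lines of~\cite{Ji_Lu}. Recall from~\cite{Borel_Ji} that, as a $G$-space, the maximal Satake compactification of $X$ is the disjoint union of \emph{boundary symmetric spaces}: to each parabolic subgroup $Q$ of $G$, with Langlands decomposition $Q = M_Q A_Q N_Q$, it attaches the Riemannian symmetric space $X_{M_Q} = M_Q/(M_Q\cap K)$ of the semisimple factor~$M_Q$, glued in so that $Q$ stabilizes $X_{M_Q}$ and acts on it through~$M_Q$; it also comes with an explicit criterion describing when a sequence of $X$ converges to a point of a given $X_{M_Q}$. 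It therefore suffices to exhibit the same structure, compatibly, on $\overX^{sba}$.

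First I would determine the boundary points of $\overX^{sba}$. Fix a nonempty subset $\theta\subseteq\Delta$, let $\mathfrak{q}_\theta = \mathfrak{l}_\theta\oplus\mathfrak{n}_\theta$ be the corresponding standard parabolic subalgebra ($\mathfrak{l}_\theta$ its standard Levi, $\mathfrak{n}_\theta = \bigoplus_{\alpha\in\Sigma^+\smallsetminus\mathrm{span}(\Delta\smallsetminus\theta)}\g_\alpha$ its nilradical), and let $\mathfrak{k}_{\mathfrak{l}_\theta}$ be a maximal compact subalgebra of~$\mathfrak{l}_\theta$. A dimension count gives $\dim(\mathfrak{k}_{\mathfrak{l}_\theta}\oplus\mathfrak{n}_\theta) = \dim\mathfrak{k}$, and since $\mathfrak{n}_\theta$ is $\kl$-isotropic with $\kl(\mathfrak{l}_\theta,\mathfrak{n}_\theta) = 0$, the subspace $W_\theta := \mathfrak{k}_{\mathfrak{l}_\theta}\oplus\mathfrak{n}_\theta$ belongs to~$\overX_{\kl}$; it is moreover a Lie subalgebra of~$\g$, not of the form $\Ad(g)\cdot\mathfrak{k}$ since it contains nilpotents. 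Along a ray $t\mapsto\exp(tY)$ with $Y\in\aaa$ in the open facet of $\overline{\aaa}^+$ determined by $\theta$ (so $\langle\alpha,Y\rangle>0$ for $\alpha\in\theta$ and $\langle\alpha,Y\rangle=0$ for $\alpha\in\Delta\smallsetminus\theta$), one checks root space by root space that $\Ad(\exp(tY))\cdot\mathfrak{k}\to W_\theta$ in the Grassmannian as $t\to+\infty$, so $W_\theta\in\overX^{sba}$. Conversely, let $W\in\overX^{sba}\smallsetminus X$ and write $W = \lim_n\Ad(g_n)\cdot\mathfrak{k}$ with $g_n = k_n a_n\ell_n$ in the Cartan decomposition, where $a_n = \exp(\mu(g_n))$; since $\ell_n\in K$ we have $\Ad(g_n)\cdot\mathfrak{k} = \Ad(k_n)\Ad(a_n)\cdot\mathfrak{k}$. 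After extraction we may assume $k_n\to k_\infty\in K$ and that, for each $\alpha\in\Delta$, the quantity $\langle\alpha,\mu(g_n)\rangle$ either tends to $+\infty$ or converges to a finite limit in~$\RRp$; let $\theta$ be the (nonempty, since $W\notin X$) set of indices of the first type. The same computation then shows $W = \Ad(k_\infty)\cdot(\mathfrak{m}\oplus\mathfrak{n}_\theta)$ with $\mathfrak{m}$ a maximal compact subalgebra of~$\mathfrak{l}_\theta$, the particular one being encoded by a point of $X_{M_\theta}$.

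This pins down the $G$-orbit stratification of $\overX^{sba}$: one orbit for each $\theta\subseteq\Delta$, with $\theta=\emptyset$ giving the open part $X$ and each nonempty $\theta$ a boundary orbit (indexed by the conjugacy class of the parabolic $P_\theta$). Within the orbit attached to a nonempty~$\theta$, the subspaces containing~$\mathfrak{n}_\theta$ form exactly $\{\,\mathfrak{m}\oplus\mathfrak{n}_\theta : \mathfrak{m}\ \text{a maximal compact subalgebra of}\ \mathfrak{l}_\theta\,\}\cong X_{M_\theta}$, a set stable under~$P_\theta$ and acted on by it through the quotient~$M_\theta$ (as $A_\theta$ acts trivially on maximal compact subalgebras of~$\mathfrak{l}_\theta$ and $N_\theta$ acts trivially modulo~$\mathfrak{n}_\theta$) — precisely the boundary symmetric space at~$P_\theta$. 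Matching these orbit by orbit yields a $G$-equivariant bijection $\phi$ from $\overX^{sba}$ onto the maximal Satake compactification, restricting to the identity on~$X$. Finally, both compactifications being compact, Hausdorff and metrizable, it suffices to show that the inclusion $X\hookrightarrow(\text{maximal Satake compactification})$ extends continuously over~$\overX^{sba}$ (the extension then necessarily being~$\phi$, and $\phi$ a homeomorphism). For this I would compare, for a sequence $x_n = g_nK$ of~$X$ converging in~$\overX_{\kl}$, the description of its limit obtained above with the standard Satake convergence criterion applied to the Cartan data $k_n$, $\mu(g_n)$: after normalization both single out the same asymptotic behaviour, so the two limits correspond under~$\phi$.

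The heart of the matter is the last step: one genuinely needs the concrete boundary structure and convergence criterion of the maximal Satake compactification in order to reconcile it with the intrinsic Grassmannian topology on~$\overX^{sba}$. Identifying the limit subalgebras at the set level is essentially a root space computation, but promoting it to an identification of \emph{topological} $G$-spaces is where the argument has content. A subsidiary point requiring care is that the limits of $\Ad(g_n)\cdot\mathfrak{k}$ must be controlled for \emph{every} sequence $g_n\to\infty$ in~$G$, in particular when $\mu(g_n)$ approaches several walls of $\overline{\aaa}^+$ at different rates; the extraction in the second paragraph reduces every such situation to a single standard subalgebra~$W_\theta$ up to $\Ad(G)$-conjugacy, which is what makes the orbit bookkeeping of the third paragraph go through.
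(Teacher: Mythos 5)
The paper does not actually prove this proposition: it is quoted verbatim from Ji--Lu \cite[Th.\,1.1]{Ji_Lu}, so there is no internal argument to compare yours against. What you propose is essentially a reconstruction of the Ji--Lu proof, and the parts you carry out are correct: the identification $\mathfrak{k}_{\mathfrak{l}_\theta}\oplus\mathfrak{n}_\theta=\mathfrak{r}_\theta$, the dimension count, the root-space-by-root-space limit $\Ad(\exp(tY))\cdot\mathfrak{k}\to\mathfrak{r}_\theta$, and the converse extraction argument producing $W=\Ad(k_\infty)\cdot(\mathfrak{m}\oplus\mathfrak{u}_\theta)$ all check out (the key point in the converse, which you use implicitly, is that $\langle\alpha,\mu(g_n)\rangle\to+\infty$ for \emph{every} $\alpha\in\Sigma^+\smallsetminus\mathrm{span}(\Delta\smallsetminus\theta)$ once it does for every $\alpha\in\theta$, because such $\alpha$ have a positive coefficient on some simple root of $\theta$). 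This recovers exactly the orbit decomposition that the paper records as Lemma~\ref{lem:orbit-Xsa}. Two small points deserve a sentence each: you should note that the orbits $\Ad(G)\cdot\mathfrak{r}_\theta$ are pairwise distinct (this follows from Lemma~\ref{lem:ker_Xsa}: the nilradical $\mathfrak{u}_\theta=\Ker(\kl|_{\mathfrak{r}_\theta\times\mathfrak{r}_\theta})$ recovers the conjugacy class of $P_\theta$), and that the $G$-equivariant bijection $\phi$ requires the stabilizers, not just the abstract orbits, to match those listed in Section~\ref{sec:orbits-description}.

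The one place where your write-up is genuinely incomplete is the one you flag yourself: continuity of the extension of $\mathrm{id}_X$ over $\overX^{sba}$. This is not automatic from a $G$-equivariant bijection extending the identity on a dense orbit, and "both single out the same asymptotic behaviour" needs to be an actual computation. The efficient way to close it is the one implicit in Section~\ref{sec:orbits-description}: by $K$-equivariance and the Cartan decomposition it suffices to compare limits of sequences $\exp(H_n)\cdot x_0$ with $H_n\in\overline{\aaa}^+$, and there the convergence criterion for $\overX^{max}$ (items (i)--(ii) of that section, with $\theta_\tau=\Delta$ so that every $\theta$ is admissible) is word for word the dichotomy you obtain in your extraction argument, namely $\langle\alpha,H_n\rangle\to t_\alpha$ for $\alpha\in\Delta\smallsetminus\theta$ and $\to+\infty$ for $\alpha\in\theta$, with matching limit points $\exp(H)\cdot x_\theta\leftrightarrow\Ad(\exp H)\cdot\mathfrak{r}_\theta$. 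One must also verify that the resulting map $K\times\overline{F}^+\to\overX^{sba}$ descends (the Cartan decomposition is not injective), which again comes down to the stabilizer computation. With those details supplied the strategy is a complete and correct proof; as written it is a sound blueprint for the argument the paper outsources to \cite{Ji_Lu}.
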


We now describe representatives of the finitely many orbits \(G\)-orbits in~\(\overX^{sba}\).
For \(\theta\subset \Delta\) the Lie algebra \(\mathfrak{p}_\theta\) of \(P_\theta\) has nilpotent radical \(\mathfrak{u}_\theta\) (see Section~\ref{sec:parabolic-subgroups}) and a Levi component is
\[\mathfrak{l}_\theta = \mathfrak{g}_0 \oplus \!\!\!\!\!\!\! \bigoplus_{\alpha \in \Sigma \cap \mathrm{span}( \Delta \smallsetminus \theta)} \mathfrak{g}_\alpha,\]
and \( \mathfrak{k}_\theta := \mathfrak{k} \cap \mathfrak{l}_\theta\) is a maximal compact subalgebra of \(\mathfrak{l}_\theta\).
Set 
\(\mathfrak{r}_\theta = \mathfrak{k}_\theta \oplus \mathfrak{u}_\theta.\)
(For \(\theta = \emptyset\), one has indeed \(\mathfrak{r}_\theta = \mathfrak{k}\).)

\begin{lemma}[\cite{Ji_Lu}]
  \label{lem:orbit-Xsa}
  The subalgebra compactification \(\overX^{sba}\) is the disjoint union \(\bigcup_{ \theta \subset \Delta} \Ad(G) \cdot \mathfrak{r}_\theta\).
\end{lemma}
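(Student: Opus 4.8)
The plan is to show two things: first, that each $\Ad(G)\cdot\mathfrak{r}_\theta$ actually lies in $\overX^{sba}$ (not just in $\overX_{\kl}$), and second, that every point of $\overX^{sba}$ lies in exactly one such orbit. For the first point, I would produce $\mathfrak{r}_\theta$ as an explicit limit of points of $X\simeq\Ad(G)\cdot\mathfrak{k}$. The natural device is to pick $a\in\overline{\aaa}^+$ with $\langle\alpha,a\rangle>0$ precisely for $\alpha\in\theta$, and compute $\lim_{t\to+\infty}\Ad(\exp(ta))\cdot\mathfrak{k}$ inside $\mathrm{Gr}_{\dim\mathfrak{k}}(\g)$. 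Writing $\mathfrak{k}$ in terms of the root space decomposition — $\mathfrak{k}$ is spanned by $\mathfrak{k}\cap\g_0$ together with vectors of the form $z_\alpha+\vartheta(z_\alpha)$ for $z_\alpha\in\g_\alpha$, where $\vartheta$ is the Cartan involution — one sees that $\Ad(\exp(ta))$ scales the $\g_\alpha$-component by $e^{t\langle\alpha,a\rangle}$ and the $\g_{-\alpha}$-component by $e^{-t\langle\alpha,a\rangle}$. As $t\to+\infty$, the lines through $z_\alpha+\vartheta(z_\alpha)$ with $\langle\alpha,a\rangle>0$ converge to lines in $\g_\alpha$, those with $\langle\alpha,a\rangle=0$ stay put, and the limit subspace is exactly $\mathfrak{k}_\theta\oplus\mathfrak{u}_\theta=\mathfrak{r}_\theta$ (here one uses that $\{\alpha\in\Sigma^+:\langle\alpha,a\rangle>0\}=\Sigma^+\smallsetminus\mathrm{span}(\Delta\smallsetminus\theta)$, which indexes $\mathfrak{u}_\theta$, and that the remaining roots span $\mathfrak{l}_\theta$, whose intersection with $\mathfrak{k}$ is $\mathfrak{k}_\theta$). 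A short dimension count confirms $\dim\mathfrak{r}_\theta=\dim\mathfrak{k}$. Hence $\mathfrak{r}_\theta\in\overX^{sba}$, and since $\overX^{sba}$ is $G$-invariant (being the closure of a $G$-orbit), so is $\Ad(G)\cdot\mathfrak{r}_\theta$.

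For the reverse inclusion, take any $W\in\overX^{sba}$. By definition $W=\lim_n\Ad(g_n)\cdot\mathfrak{k}$ for some $g_n\in G$. Apply the Cartan decomposition $g_n=k_n\exp(\mu(g_n))\ell_n$; since $\ell_n$ stabilizes $\mathfrak{k}$, we have $\Ad(g_n)\cdot\mathfrak{k}=\Ad(k_n)\Ad(\exp(\mu(g_n)))\cdot\mathfrak{k}$. Passing to a subsequence, assume $k_n\to k_\infty\in K$ and $\mu(g_n)/\|\mu(g_n)\|\to$ some unit vector (or $\mu(g_n)$ stays bounded, in which case $W\in\Ad(G)\cdot\mathfrak{k}=\Ad(G)\cdot\mathfrak{r}_\emptyset$ and we are done). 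Refining the subsequence so that the sign of $\langle\alpha,\mu(g_n)\rangle$ stabilizes and $\langle\alpha,\mu(g_n)\rangle$ tends to $+\infty$ or stays bounded for each $\alpha\in\Delta$, let $\theta=\{\alpha\in\Delta:\langle\alpha,\mu(g_n)\rangle\to+\infty\}$. The computation from the first paragraph — now with a varying rather than fixed direction, but the combinatorics only depend on which simple roots diverge — gives $\lim_n\Ad(\exp(\mu(g_n)))\cdot\mathfrak{k}=\mathfrak{r}_\theta$, hence $W=\Ad(k_\infty)\cdot\mathfrak{r}_\theta\in\Ad(G)\cdot\mathfrak{r}_\theta$. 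This shows $\overX^{sba}=\bigcup_{\theta\subset\Delta}\Ad(G)\cdot\mathfrak{r}_\theta$.

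It remains to see the union is disjoint, i.e.\ the orbits $\Ad(G)\cdot\mathfrak{r}_\theta$ for distinct $\theta$ are distinct. One way: recover $\theta$ from $W=\mathfrak{r}_\theta$ intrinsically, e.g.\ via the radical of the bilinear form $\kl|_{W\times W}$, which for $\mathfrak{r}_\theta=\mathfrak{k}_\theta\oplus\mathfrak{u}_\theta$ equals $\mathfrak{u}_\theta$ (since $\kl$ is negative definite on $\mathfrak{k}_\theta$ and vanishes identically on $\mathfrak{u}_\theta$ and between $\mathfrak{u}_\theta$ and $\mathfrak{k}_\theta$, as $\mathfrak{u}_\theta$ is nilpotent and $\mathfrak{k}_\theta$-invariant). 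Thus $W$ determines $\mathfrak{u}_\theta$, which determines $\mathfrak{p}_\theta=N_\g(\mathfrak{u}_\theta)$, hence the conjugacy class of $P_\theta$, hence $\theta$; and $G$-translates of $\mathfrak{r}_\theta$ give $G$-translates of $\mathfrak{u}_\theta$, so the invariant "conjugacy class of $\mathfrak{u}_\theta$" separates the orbits. I expect the main obstacle to be the limit computation in the second paragraph: controlling $\lim_n\Ad(\exp(\mu(g_n)))\cdot\mathfrak{k}$ when $\mu(g_n)$ moves in a genuinely varying direction in $\overline{\aaa}^+$ requires care, since different rates of divergence among the roots in $\theta$ could a priori matter — one must verify that only the \emph{set} $\theta$ of diverging simple roots, not the rates, affects the limit flag, which comes down to the fact that any positive root not in $\mathrm{span}(\Delta\smallsetminus\theta)$ has a strictly positive coefficient on some root of $\theta$ and hence its pairing with $\mu(g_n)$ still diverges. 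Since this lemma is attributed to Ji--Lu, I would likely present the argument compactly and cite \cite{Ji_Lu} for the technical details.
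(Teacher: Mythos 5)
The paper does not actually prove this lemma: it is stated with a citation to \cite{Ji_Lu} and used as a black box, so there is no in-paper argument to compare against. Your direct proof is essentially the standard one (and matches the explicit boundary-orbit computation the authors carry out for the Satake compactification in Appendix~A.2): the identification of $\lim_{t\to+\infty}\Ad(\exp(ta))\cdot\mathfrak{k}$ with $\mathfrak{k}_\theta\oplus\mathfrak{u}_\theta$ via the decomposition of $\mathfrak{k}$ into $\mathfrak{z}_{\mathfrak{k}}(\aaa)$ and the vectors $z_\alpha+\vartheta(z_\alpha)$, the Cartan decomposition plus subsequence extraction for the reverse inclusion, and the recovery of $\theta$ from $\Ker(\kl|_{W\times W})=\mathfrak{u}_\theta$ (which is exactly the paper's Lemma~\ref{lem:ker_Xsa}) followed by $\mathfrak{p}_\theta=N_{\g}(\mathfrak{u}_\theta)$ for disjointness. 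This is a sound plan and I would accept it.

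Two small points to tighten. First, in the surjectivity step you should extract so that the bounded pairings $\langle\alpha,\mu(g_n)\rangle$, $\alpha\in\Delta\smallsetminus\theta$, actually \emph{converge} to some $t_\alpha\in\RR$, not merely stay bounded; and then the limit of $\Ad(\exp(\mu(g_n)))\cdot\mathfrak{k}$ is not $\mathfrak{r}_\theta$ itself but $\Ad(\exp(H))\cdot\mathfrak{r}_\theta$, where $H\in\aaa$ satisfies $\langle\alpha,H\rangle=t_\alpha$ for $\alpha\in\Delta\smallsetminus\theta$ (the vectors $z_\alpha+\vartheta(z_\alpha)$ with $\alpha\in\mathrm{span}(\Delta\smallsetminus\theta)$ limit to $z_\alpha+e^{-2t_\alpha}\vartheta(z_\alpha)$, which generally differ from elements of $\mathfrak{k}_\theta$). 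This still lies in $\Ad(G)\cdot\mathfrak{r}_\theta$, so the conclusion is unaffected, but as written the displayed limit is incorrect. Second, in the first step one should note explicitly that the normalized spanning vectors have linearly independent limits (which follows from the directness of the root space decomposition), so that convergence of spanning lines really does give convergence in the Grassmannian; your dimension count is the right check here.
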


The kernel of the restriction of \(\kl\) to \(\mathfrak{r}_\theta\) is precisely \(\mathfrak{u}_\theta\):

\begin{lemma}
  \label{lem:ker_Xsa}
  For every \(\theta \subset \Delta\), one has \(\Ker(\kl|_{\mathfrak{r}_\theta\times \mathfrak{r}_\theta}) = \mathfrak{u}_\theta\). Consequently a nilpotent element \(Y\) belongs to \(\mathfrak{r}_\theta\) if and only if it belongs to \(\mathfrak{u}_\theta\).
\end{lemma}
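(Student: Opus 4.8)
The plan is to work inside a Cartan subspace to make the restriction of $\kl$ explicit and then read off the kernel. First I would recall that $\mathfrak{r}_\theta = \mathfrak{k}_\theta \oplus \mathfrak{u}_\theta$, where $\mathfrak{k}_\theta = \mathfrak{k}\cap\mathfrak{l}_\theta$ is the maximal compact subalgebra of the Levi $\mathfrak{l}_\theta = \mathfrak{g}_0 \oplus \bigoplus_{\alpha\in\Sigma\cap\mathrm{span}(\Delta\smallsetminus\theta)}\mathfrak{g}_\alpha$, and $\mathfrak{u}_\theta = \bigoplus_{\alpha\in\Sigma^+\smallsetminus\mathrm{span}(\Delta\smallsetminus\theta)}\mathfrak{g}_\alpha$. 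The key point is that $\mathfrak{l}_\theta$ normalizes $\mathfrak{u}_\theta$, so $[\mathfrak{k}_\theta,\mathfrak{u}_\theta]\subset\mathfrak{u}_\theta$, which together with the $\kl$-invariance of the bracket will let me show $\kl(\mathfrak{k}_\theta,\mathfrak{u}_\theta)=0$.

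The two steps are then: (i) $\mathfrak{u}_\theta \subset \Ker(\kl|_{\mathfrak{r}_\theta\times\mathfrak{r}_\theta})$, and (ii) the restriction of $\kl$ to $\mathfrak{k}_\theta$ is nondegenerate (indeed negative definite). For (i): $\kl(\mathfrak{u}_\theta,\mathfrak{u}_\theta)=0$ because $\mathfrak{u}_\theta$ is spanned by root spaces $\mathfrak{g}_\alpha$ with $\alpha$ in an open half-space (all such $\alpha$ are positive combinations involving roots of $\theta$), so the sum of any two such roots is never $0$ and $\kl(\mathfrak{g}_\alpha,\mathfrak{g}_\beta)=0$ unless $\beta=-\alpha$; and $\kl(\mathfrak{k}_\theta,\mathfrak{u}_\theta)=0$ by the invariance computation above: for $Z\in\mathfrak{k}_\theta$ and $U\in\mathfrak{u}_\theta$, one can realize $U=[Z',U']$ or argue directly that $\kl(Z,U)=0$ since $\ad(Z)$ preserves both $\mathfrak{u}_\theta$ and its $\kl$-orthogonal complement $\mathfrak{u}_\theta^-\oplus\mathfrak{l}_\theta$ — more cleanly, $\kl(\mathfrak{k}_\theta,\mathfrak{u}_\theta)=0$ because $\mathfrak{k}_\theta\subset\mathfrak{l}_\theta$, $\mathfrak{u}_\theta$ is a sum of root spaces $\mathfrak{g}_\alpha$ with $\alpha\notin\mathrm{span}(\Delta\smallsetminus\theta)$, while $\mathfrak{l}_\theta\cap(\text{root-space decomposition})$ involves only $\mathfrak{g}_0$ and $\mathfrak{g}_\beta$ with $\beta\in\mathrm{span}(\Delta\smallsetminus\theta)$, and pairing $\mathfrak{g}_\beta$ with $\mathfrak{g}_\alpha$ requires $\alpha=-\beta\in\mathrm{span}(\Delta\smallsetminus\theta)$, a contradiction; pairing with $\mathfrak{g}_0$ requires $\alpha=0$. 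Hence $\mathfrak{u}_\theta\subset\Ker(\kl|_{\mathfrak{r}_\theta})$. For (ii): $\mathfrak{k}_\theta\subset\mathfrak{k}$, and $\kl$ is negative definite on $\mathfrak{k}$ (standard, from the Cartan decomposition $\g=\mathfrak{k}\oplus\mathfrak{p}$ with $\kl|_\mathfrak{k}<0$, $\kl|_\mathfrak{p}>0$), so $\kl|_{\mathfrak{k}_\theta}$ is negative definite, in particular nondegenerate.

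Combining: if $X = Z+U \in \mathfrak{r}_\theta$ with $Z\in\mathfrak{k}_\theta$, $U\in\mathfrak{u}_\theta$ lies in $\Ker(\kl|_{\mathfrak{r}_\theta\times\mathfrak{r}_\theta})$, then for all $Z'\in\mathfrak{k}_\theta$ we have $0=\kl(X,Z')=\kl(Z,Z')$ by (i), so $Z\in\Ker(\kl|_{\mathfrak{k}_\theta})=0$ by (ii), whence $X=U\in\mathfrak{u}_\theta$; the reverse inclusion is (i). This gives $\Ker(\kl|_{\mathfrak{r}_\theta\times\mathfrak{r}_\theta})=\mathfrak{u}_\theta$. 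For the final assertion: if $Y\in\mathfrak{r}_\theta$ is nilpotent, write $Y=Z+U$ with $Z\in\mathfrak{k}_\theta$, $U\in\mathfrak{u}_\theta$; since $\mathfrak{u}_\theta$ is an ideal of $\mathfrak{p}_\theta$ consisting of nilpotent elements, it projects to $Y\mapsto Z$ in the reductive quotient $\mathfrak{l}_\theta$, and a nilpotent element of $\mathfrak{p}_\theta$ has nilpotent — hence zero — image in the reductive Levi $\mathfrak{l}_\theta$ (equivalently, $\mathrm{ad}(Z)$ is nilpotent and $Z$ lies in a compact subalgebra, forcing $Z=0$); thus $Y=U\in\mathfrak{u}_\theta$. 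Conversely every element of $\mathfrak{u}_\theta$ is nilpotent. I expect step (ii)'s role — namely pinning down exactly that $\mathfrak{k}_\theta$ contributes no kernel, and the ``nilpotent element of $\mathfrak{p}_\theta$ with trivial image in $\mathfrak{l}_\theta$'' argument for the last sentence — to be the only place needing care; the orthogonality computations in (i) are routine root-space bookkeeping.
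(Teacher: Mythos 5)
Your computation of \(\Ker(\kl|_{\mathfrak{r}_\theta\times\mathfrak{r}_\theta})=\mathfrak{u}_\theta\) is correct and is essentially the paper's argument: where the paper simply quotes \(\mathfrak{u}_\theta=\Ker(\kl|_{\mathfrak{p}_\theta\times\mathfrak{p}_\theta})\), you reprove the inclusion \(\mathfrak{u}_\theta\subset\Ker(\kl|_{\mathfrak{r}_\theta\times\mathfrak{r}_\theta})\) by root-space bookkeeping, and both proofs then use negative definiteness of \(\kl\) on \(\mathfrak{k}_\theta\subset\mathfrak{k}\) to kill the \(\mathfrak{k}_\theta\)-component of a kernel element.

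The last sentence is where there is a genuine gap. The claim that a nilpotent element of \(\mathfrak{p}_\theta\) has ``nilpotent --- hence zero --- image in the reductive Levi \(\mathfrak{l}_\theta\)'' is false as stated: reductive Lie algebras have nonzero nilpotent elements. The parenthetical repair does not close the gap either. What your quotient argument actually yields is that \(\ad_{\mathfrak{l}_\theta}(Z)\) is nilpotent; since \(Z\in\mathfrak{k}_\theta\subset\mathfrak{k}\) is elliptic, this operator is also semisimple, hence zero, so all you get is \(Z\in\mathfrak{z}(\mathfrak{l}_\theta)\). But \(\mathfrak{z}(\mathfrak{l}_\theta)\cap\mathfrak{k}_\theta\) can be nonzero (for the minimal parabolic of \(\SU(2,1)\) one has \(\mathfrak{k}_\Delta=\mathfrak{z}_{\mathfrak{k}}(\aaa)\cong\mathfrak{u}(1)\), which is central in \(\mathfrak{l}_\Delta=\g_0\)), so you cannot conclude \(Z=0\) at this level. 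To run the argument with \(\ad_{\g}(Z)\) nilpotent instead --- which, combined with ellipticity, does force \(Z=0\) since \(\g\) is semisimple --- you would need to know that \(\ad_\g(Z)\) is nilpotent, and this does not follow from nilpotency of \(Y=Z+U\): the summands \(Z\) and \(U\) need not commute, so \(Y=Z+U\) is not a Jordan decomposition. The fix is immediate from what you already proved in step (i): a nilpotent \(Y\) satisfies \(\kl(Y,Y)=\tr(\ad(Y)^2)=0\), while \(\kl(Y,Y)=\kl(Z,Z)\) by your orthogonality relations, so negative definiteness of \(\kl|_{\mathfrak{k}_\theta}\) gives \(Z=0\). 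This is exactly the paper's argument, phrased there by applying the first part of Lemma~\ref{lem:incid_iso_neg} to the nonpositive subspace \(\mathfrak{r}_\theta\).
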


\begin{proof}
  Since \(\mathfrak{r}_\theta \subset \mathfrak{p}_\theta\), one has \(\mathfrak{u}_\theta = \Ker(\kl|_{\mathfrak{p}_\theta\times\mathfrak{p}_\theta}) \subset \Ker(\kl|_{\mathfrak{r}_\theta\times \mathfrak{r}_\theta})\).
  Furthermore, since the Killing form $\kl$ is negative definite in restriction to~\(\mathfrak{k}\), the intersection \(\Ker(\kl|_{\mathfrak{r}_\theta\times \mathfrak{r}_\theta}) \cap \mathfrak{k}_\theta\) is trivial, hence \(\Ker(\kl|_{\mathfrak{r}_\theta\times \mathfrak{r}_\theta}) = \mathfrak{u}_\theta\).

  A nilpotent element \(Y\) satisfies \(\kl(Y,Y)=0\), hence \(Y\in \Ker(\kl|_{\mathfrak{r}_\theta\times \mathfrak{r}_\theta})\) by Lemma~\ref{lem:incid_iso_neg}.
\end{proof}

\subsection{Proof of Theorem \ref{thm:main}} \label{subsec:proof-thm-main}

Let $\rho : \Gamma\to G$ be a $P_{\theta}$-Anosov representation.
By Proposition~\ref{prop:PthetaG_P1opq}, there exists a homomorphism \(\tau: G\to \OO(b)\) such that \(\tau\circ \rho:\Gamma \to \OO(b)\) is \(P_1(b)\)-Anosov.
Let $\Omega  = \overX_{b} \smallsetminus \mathcal{N}_{\rho}$ be the set given by Theorem~\ref{thm:comp_loc_sym_spc_opq}, on which $\Gamma$ acts properly discontinuously and cocompactly via $\tau\circ\rho$.
Let $x_0$ be a point of~$X_{b}$ whose stabilizer in $G$ is~$K$, and let $Y$ be the $\tau(G)$-orbit of~$x_0$: it identifies with $\tau(G)/\tau(K)$.
The closure $\overY$ of $Y$ in~$\overX_{b}$ is a generalized Satake compactification, see Lemma~\ref{lem:genSatake-subsymm}.
The group $\Gamma$ acts properly discontinuously and cocompactly via $\tau\circ\rho$ on $\Omega\cap\overY$.
The quotient $M  = (\tau\circ\rho)(\Gamma) \backslash (\Omega \cap \overY)$ thus gives a compactification of $(\tau\circ\rho)(\Gamma) \backslash \tau(G)/\tau(K)$.

By compactness of $\overX^{sba}$, the action of \(\Gamma\) on \(\Omega \times \overX^{sba}\) via $(\tau \times \Ad) \circ\rho$ is properly discontinuous and cocompact.
The $\Ad(G)$-orbit of $\mathfrak{r}_\emptyset$ in $\overX^{sba}$ identifies with $X=G/K$ and is dense.
Let $Z$ be the $(\tau\times\Ad)(G)$-orbit of $(x_0,\mathfrak{r}_\emptyset)$.
By Lemma~\ref{lem:gener-satake-comp-product}, the closure \(\overZ\) of $Z$ in $\overX_{b}\times\overX^{sba}$ is a generalized Satake compactification of~$Z$.
By construction, it dominates the maximal Satake compactification $\overX^{sba}$ of~$X$.
Lemma~\ref{lem:gal-sata-corners} says that it is a manifold with corners.
The group $\Gamma$ acts properly discontinuously and cocompactly on $(\Omega\times \overX^{sba})\cap\overZ$ via $(\tau\times\Ad)\circ\rho$.
The quotient 
\[ ((\tau\times\Ad)\circ\rho)(\Gamma) \backslash ((\Omega\times\overX^{sba})\cap\overZ) \]
gives a compactification of $((\tau\times\Ad)\circ\rho)(\Gamma) \backslash  Z \simeq \rho(\Gamma) \backslash X$ with the required properties.
This completes the proof of Theorem~\ref{thm:main}.

\subsection{Domains of discontinuity in $\overX^{sba}$ for discrete subgroups}
\label{sec:doma-disc-overxsba}

\begin{theorem} \label{thm:bdf}
Let $X=G/K$ be a Riemannian symmetric space where $G$ is a noncompact real semisimple Lie group.
For any discrete subgroup $\Lambda$ of~$G$ there is a closed $\Lambda$-invariant subset $\mathcal{N}$ of $\overX^{sba}\smallsetminus X$ such that $\Lambda$ acts properly discontinuously on $\overX^{sba}\smallsetminus\mathcal{N}$.
In particular 
\(\Lambda \backslash (\overX^{sba}\smallsetminus\mathcal{N})\) is a manifold with corners containing \(\Lambda \backslash X\) as a dense subset.
\end{theorem}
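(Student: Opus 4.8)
The plan is to reduce Theorem~\ref{thm:bdf} to Proposition~\ref{prop:propernessPk-general} applied to the adjoint representation, exactly as the proof of Theorem~\ref{thm:main} reduces to Theorem~\ref{thm:comp_loc_sym_spc_opq}. Concretely, set $V=\g$ and $b=\kl$, and consider the inclusion $\Lambda\hookrightarrow G$ composed with $\Ad: G\to\OO(\kl)$; call the resulting discrete subgroup $\Lambda'=\Ad(\Lambda)$ of $\OO(\kl)$. Since $G$ is semisimple, $\Ad$ has finite kernel, so $\Lambda'$ is discrete and $\Ad|_\Lambda$ has finite kernel. There is one subtlety: $\kl$ has signature $(p,q)$ with $p=\dim\g-\dim\mathfrak{k}$ and $q=\dim\mathfrak{k}$, and Proposition~\ref{prop:propernessPk-general} requires $p\neq q$; but $p>q$ always holds for noncompact $G$ (the noncompact part of $\g$ has strictly larger dimension than $\mathfrak{k}$ whenever $G$ is noncompact semisimple), so this hypothesis is automatic. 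Thus Proposition~\ref{prop:propernessPk-general} yields a closed $\Lambda'$-invariant set $\mathcal{W}_{\Ad}\subset\overX_{\kl}$ with $X_{\kl}\cap\mathcal{W}_{\Ad}=\emptyset$ and $\Lambda'$ acting properly discontinuously on $\overX_{\kl}\smallsetminus\mathcal{W}_{\Ad}$.

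Next I would intersect with the subalgebra compactification. The set $\overX^{sba}$ is the closure of the $\Ad(G)$-orbit $\Ad(G)\cdot\mathfrak{r}_\emptyset$ inside $\overX_{\kl}$, hence is a closed $\Ad(G)$-invariant (in particular $\Lambda'$-invariant) subset of $\overX_{\kl}$. Define
\[
  \mathcal{N} = \mathcal{W}_{\Ad}\cap\overX^{sba}.
\]
This is closed in $\overX^{sba}$ and $\Lambda$-invariant (via $\Ad$). Since $X=\Ad(G)\cdot\mathfrak{r}_\emptyset\subset X_{\kl}$ and $\mathcal{W}_{\Ad}$ misses $X_{\kl}$, we get $\mathcal{N}\subset\overX^{sba}\smallsetminus X$. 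The action of $\Lambda$ on $\overX^{sba}\smallsetminus\mathcal{N}=(\overX_{\kl}\smallsetminus\mathcal{W}_{\Ad})\cap\overX^{sba}$ is the restriction of the properly discontinuous action of $\Lambda'$ on $\overX_{\kl}\smallsetminus\mathcal{W}_{\Ad}$ to a closed invariant subspace, hence is properly discontinuous (proper discontinuity passes to invariant subspaces with the subspace topology — e.g.\ via Criterion~\ref{crit:dyn-proper}, since a pair dynamically related in the subspace is dynamically related in the ambient space).

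It remains to identify $\Lambda\backslash(\overX^{sba}\smallsetminus\mathcal{N})$ as a manifold with corners containing $\Lambda\backslash X$ densely. Density of $\Lambda\backslash X$ follows because $X=\Ad(G)\cdot\mathfrak{r}_\emptyset$ is dense in $\overX^{sba}$ (Lemma~\ref{lem:orbit-Xsa}, or already the definition of the subalgebra compactification) and $\overX^{sba}\smallsetminus\mathcal{N}$ is open in $\overX^{sba}$ and contains $X$; density is preserved under passage to the quotient. For the manifold-with-corners statement, I would invoke Proposition~\ref{prop:Satake_subalgebra} identifying $\overX^{sba}$ with the maximal Satake compactification, which is known to carry the structure of a compact manifold with corners on which $G$ acts smoothly; since $\Lambda$ acts properly discontinuously on the open piece $\overX^{sba}\smallsetminus\mathcal{N}$, the quotient inherits a (possibly orbifold, but if $\Lambda$ is torsion-free, genuine) manifold-with-corners structure. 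The main obstacle I anticipate is precisely this last point: verifying that $\overX^{sba}\smallsetminus\mathcal{N}$, with its subspace topology and the corner structure inherited from $\overX^{sba}$, is genuinely a manifold with corners — one must check that removing the closed set $\mathcal{N}$ does not create pathologies and that the corner structure of the maximal Satake compactification restricts well; this is where the bulk of the careful argument lies, the reduction to orthogonal groups being essentially formal.
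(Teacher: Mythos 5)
Your argument follows the paper's proof of Theorem~\ref{thm:bdf} essentially verbatim: apply Proposition~\ref{prop:propernessPk-general} to $\rho=\Ad$, intersect the resulting domain of proper discontinuity with the closed $\Ad(G)$-invariant subset $\overX^{sba}$ of $\overX_{\kl}$, and observe that proper discontinuity, the density of $X$, and the manifold-with-corners structure all restrict. Your added remarks (proper discontinuity passes to invariant subspaces via Criterion~\ref{crit:dyn-proper}; the quotient is a genuine manifold only if $\Lambda$ is torsion-free, an orbifold otherwise) are correct and in fact more careful than the paper, which asserts these points without comment.

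However, your verification of the hypothesis $p\neq q$ of Proposition~\ref{prop:propernessPk-general} is wrong. The Killing form $\kl$ has signature $(\dim\mathfrak{p},\dim\mathfrak{k})$, where $\g=\mathfrak{k}\oplus\mathfrak{p}$ is the Cartan decomposition, and it is false that $\dim\mathfrak{p}>\dim\mathfrak{k}$ for every noncompact semisimple~$G$: for $G=\SO(n,1)$ with $n\geq 4$ one has $\dim\mathfrak{p}=n<\binom{n}{2}=\dim\mathfrak{k}$, and for any complex semisimple group regarded as a real group (e.g.\ $G=\SL(2,\CC)$, signature $(3,3)$) one has $\dim\mathfrak{p}=\dim\mathfrak{k}$ exactly. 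The case $p<q$ is harmless, since Proposition~\ref{prop:propernessPk-general} is symmetric in $p$ and~$q$. But the split case $p=q$ genuinely violates the stated hypothesis: the index $i=\min(p,q)$ is excluded from Proposition~\ref{prop:propernessPk} when $p=q$, while the proof of Proposition~\ref{prop:propernessPk-general} needs every index along which $\mu$ diverges, so one would have to treat the roots $\alpha_{p-1},\alpha_p$ of the $D_p$ system separately or extend the proposition to split signature. To be fair, the paper's own proof applies Proposition~\ref{prop:propernessPk-general} to $\Ad$ without addressing this point either, so the gap is inherited rather than introduced by you; but the specific justification you supplied for the signature condition is incorrect and should be removed or repaired.
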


\begin{proof}[Proof of Theorem~\ref{thm:bdf}]
Let $\Ad : G\to\OO(\kl)$ be the adjoint action; it has finite kernel.
Let $\Lambda$ be a discrete subgroup of~$G$ and $\Omega = \overX_{\kl} \smallsetminus \mathcal{W}_{\Ad} \subset \overX_{\kl}$ the set given by Proposition~\ref{prop:propernessPk-general} for $\Gamma=\Lambda$ and $\rho=\Ad$, on which $\Lambda$ acts properly discontinuously via $\Ad$.
The $\Ad(G)$-orbit of~$\mathfrak{r}_\emptyset$ identifies with $X= G / K$.
The closure $\overX^{sba}$ of $X$ in~$\overX_{\kl}$ is the subalgebra compactification.
The group $\Lambda$ acts properly discontinuously via $\Ad$ on $\Omega\cap\overX^{sba}$.
The quotient $M = \Ad(\Lambda) \backslash (\Omega \cap \overX^{sba})$ thus is locally modeled on \(\overX^{sba}\) and contains $\Ad(\Lambda) \backslash X$.
\end{proof}

\subsection{Topological tameness} \label{subsec:tameness}

Theorem~\ref{thm:tame} is now a direct consequence of the construction of our compactification in Theorem~\ref{thm:main} and of the following proposition, which is proved in \cite{GGKW_compact}.

\begin{proposition}[{\cite[Prop.\,6.1]{GGKW_compact}}] \label{prop:semialgebraic}
  Let \(X\) be a real semi-algebraic set and \(\Gamma\) a torsion-free discrete group acting on~$X$ by real algebraic homeomorphisms.
  Suppose \(\Gamma\) acts properly discontinuously and cocompactly on some open subset \(\Omega\) of~\(X\).
  Let \(\mathcal{U}\) be a \(\Gamma\)-invariant real semi-algebraic subset of~$X$ contained in \(\Omega\) (\eg an orbit of a real algebraic group containing \(\Gamma\) and acting algebraically on~\(X\)).
Then the closure \(\overline{\mathcal{U}}\) of $\mathcal{U}$ in~$X$ is real semi-algebraic and \(\Gamma
  \backslash (\overline{\mathcal{U}} \cap \Omega)\) is compact and has a triangulation such that \(\Gamma \backslash (\partial \mathcal{U} \cap \Omega)\) is a finite union of simplices.
If \(\mathcal{U}\) is a manifold, then \(\Gamma \backslash \mathcal{U}\) is topologically tame.
\end{proposition}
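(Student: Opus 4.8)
The plan is to prove the four assertions in turn: semi-algebraicity of $\overline{\mathcal U}$, compactness of $\Gamma\backslash(\overline{\mathcal U}\cap\Omega)$, existence of the compatible triangulation, and finally topological tameness when $\mathcal U$ is a manifold, the last following formally from the triangulation by a regular neighborhood argument. For the first point, recall that if $\mathcal U$ is an orbit $Hx$ of a real algebraic group $H$ acting algebraically on $X$, then $\mathcal U$ is the image of the semi-algebraic orbit map $h\mapsto hx$, hence semi-algebraic; more generally $\mathcal U$ is assumed semi-algebraic. The closure of a semi-algebraic set is semi-algebraic (a standard consequence of quantifier elimination over the reals), so $\overline{\mathcal U}$, and with it the frontier $\partial\mathcal U=\overline{\mathcal U}\smallsetminus\mathcal U$, is semi-algebraic. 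For the second point, let $\pi\colon\Omega\to\Gamma\backslash\Omega$ be the quotient map; cocompactness of the action means $\Gamma\backslash\Omega$ is compact. The set $\overline{\mathcal U}\cap\Omega$ is $\Gamma$-invariant and closed in $\Omega$, so since $\pi$ is a quotient map and $\pi^{-1}(\pi(\overline{\mathcal U}\cap\Omega))=\overline{\mathcal U}\cap\Omega$, its image $\Gamma\backslash(\overline{\mathcal U}\cap\Omega)$ is closed in the compact space $\Gamma\backslash\Omega$, hence compact.

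For the triangulation, the idea is to work on a compact semi-algebraic fundamental set and keep track of the finitely many identifications. Since $\Omega$ is open in the semi-algebraic set $X$, every point of $\Omega$ has arbitrarily small compact semi-algebraic neighborhoods contained in $\Omega$; using compactness of $\Gamma\backslash\Omega$ one can then choose a compact semi-algebraic subset $D\subset\Omega$ with $\Gamma\cdot\mathring D=\Omega$. By proper discontinuity the set $F=\{\gamma\in\Gamma\mid\gamma D\cap D\neq\varnothing\}$ is finite, and $\Gamma\backslash(\overline{\mathcal U}\cap\Omega)$ is the quotient of the compact semi-algebraic set $A:=\overline{\mathcal U}\cap D$ by the equivalence relation generated by the partially defined bijections $\gamma\colon A\cap\gamma^{-1}D\to\gamma A\cap D$ for $\gamma\in F$; these are semi-algebraic because each $\gamma$ acts on $X$ by a real algebraic homeomorphism. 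One then invokes the semi-algebraic triangulation theorem, iterated over the finitely many elements of $F$ and combined with compatible semi-algebraic subdivisions, to produce a semi-algebraic triangulation of $A$ which restricts to a subcomplex structure on $A\cap\partial\mathcal U$ and on each $A\cap\gamma D$, and with respect to which every identification $\gamma\in F$ is simplicial. The simplices of $A$ then descend to a finite simplicial structure on $\Gamma\backslash(\overline{\mathcal U}\cap\Omega)$ in which $\Gamma\backslash(\partial\mathcal U\cap\Omega)$, being the image of the subcomplex $A\cap\partial\mathcal U$, is a union of simplices.

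I expect the main obstacle to be precisely this consistency step: a subdivision making one identification $\gamma\in F$ simplicial may spoil simpliciality for another, and one must also respect the algebraic relations among elements of $F$ (for instance $\gamma_1\gamma_2\in F$ when it occurs). This is handled by the standard bookkeeping for triangulations of proper cocompact actions in a category with good subdivision theory: because $F$ is finite and all the maps and sets in play are semi-algebraic, finitely many rounds of mutually compatible subdivision suffice, and the hypothesis that $\Gamma$ is torsion-free — so the action on $\Omega$ is free — ensures that no simplex of $A$ gets identified with itself or with another simplex meeting it, so that the descended complex is genuinely simplicial.

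Finally, suppose $\mathcal U$ is a manifold, so that $\Gamma\backslash\mathcal U$ is a manifold without boundary, homeomorphic to $|\mathcal K|\smallsetminus|\mathcal L|$ where $(\mathcal K,\mathcal L)$ is the finite simplicial pair $\bigl(\Gamma\backslash(\overline{\mathcal U}\cap\Omega),\ \Gamma\backslash(\partial\mathcal U\cap\Omega)\bigr)$ constructed above. After a barycentric subdivision, let $N$ be the closed star of $|\mathcal L|$, a regular neighborhood of $|\mathcal L|$ in $|\mathcal K|$; then its frontier $\partial N$ is bicollared in $|\mathcal K|\smallsetminus|\mathcal L|$ and $N\smallsetminus|\mathcal L|$ is an open collar homeomorphic to $\partial N\times[0,1)$. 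Hence $W:=|\mathcal K|\smallsetminus\mathring N$ is a compact polyhedron whose interior $|\mathcal K|\smallsetminus N$ is open in the manifold $\Gamma\backslash\mathcal U$ and which is collared along $\partial N$, so $W$ is a compact manifold with boundary $\partial N$, and $\Gamma\backslash\mathcal U=W\cup_{\partial N}(\partial N\times[0,1))$ is $W$ with an external open collar attached along its boundary, hence homeomorphic to the interior of $W$. Therefore $\Gamma\backslash\mathcal U$ is topologically tame.
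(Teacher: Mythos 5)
First, a point of reference: the paper does not prove this proposition — it is imported verbatim from \cite[Prop.\,6.1]{GGKW_compact} — so there is no in-paper argument to compare yours against. Your overall strategy (closure of a semi-algebraic set is semi-algebraic; $\Gamma\backslash(\overline{\mathcal U}\cap\Omega)$ is a closed subset of the compact space $\Gamma\backslash\Omega$; triangulate a compact semi-algebraic fundamental set and descend; remove a regular neighborhood of the boundary subcomplex to conclude tameness) is the natural one and surely matches the cited proof in outline. The first two assertions are handled correctly, and the final step is the standard regular-neighborhood/collar argument (take the second derived neighborhood rather than a single barycentric subdivision, and note that you correctly use $\mathcal U\subset\Omega$ to identify $|\mathcal K|\smallsetminus|\mathcal L|$ with $\Gamma\backslash\mathcal U$). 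One minor caveat: extracting a compact semi-algebraic $D\subset\Omega$ with $\Gamma\cdot\mathring{D}=\Omega$ uses local compactness of $\Omega$, which is not literally implied by ``semi-algebraic set'' but holds in every application in the paper, where $X$ is closed in a product of Grassmannians.

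The genuine gap is the step you flag and then dismiss: making the finitely many partial identifications $\gamma\colon A\cap\gamma^{-1}D\to\gamma A\cap D$, $\gamma\in F$, simultaneously simplicial. This is not ``standard bookkeeping'' with ``finitely many rounds of mutually compatible subdivision'': subdividing to make $\gamma_1$ simplicial changes the cell pattern seen by $\gamma_2$, and the naive alternating procedure need not terminate — this is precisely the circle of difficulties surrounding the Hauptvermutung. Even the assertion that a \emph{single} semi-algebraic homeomorphism between compact polyhedra can be made simplicial after subdivision is a nontrivial theorem (the semi-algebraic Hauptvermutung of Shiota--Yokoi), and the simultaneous, cocycle-compatible version is the content of an equivariant triangulation theorem, not a formality; freeness of the action (from torsion-freeness) only prevents a simplex from being identified with itself and does not address the subdivision issue. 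To close the gap, either invoke such a result explicitly, or — more economically — avoid gluing altogether: the equivalence relation induced by $\Gamma$ on the compact semi-algebraic set $D$ is a closed, proper, semi-algebraic equivalence relation with finite classes, so the quotient $\Gamma\backslash\Omega$ carries a natural structure of compact semi-algebraic set for which $D\to\Gamma\backslash\Omega$ is semi-algebraic (semi-algebraic proper quotients exist); the semi-algebraic triangulation theorem applied directly to $\Gamma\backslash\Omega$ together with the two semi-algebraic subsets $\Gamma\backslash(\overline{\mathcal U}\cap\Omega)$ and $\Gamma\backslash(\partial\mathcal U\cap\Omega)$ then yields the required triangulation with no identifications to control.
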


\subsection{Compactifications modeled on the maximal Satake compactification} \label{sec:maxSatake}

We now prove the following theorem, which, together with Proposition~\ref{prop:Satake_subalgebra}, implies Theorem~\ref{thm:main_max_Satake} in the case that $G$ is simple.

\begin{theorem}\label{thm:max_Satake}
Let $G$ be a real simple Lie group.
Let $\alpha_G\in \Delta$ be the simple restricted root given by Proposition~\ref{prop:G_makeP1} (see Table~\ref{table3}) and \(d =\dim_\RR \mathfrak{g}_{\chi_G}\), where $\chi_G\in\Sigma^+$ is the highest restricted root.
Let $\Gamma$ be a word hyperbolic group and $\rho: \Gamma \to G$ a $P_{\{\alpha_G\}}$-Anosov representation.
Let \(\xi_d : \partial_\infty \Gamma \to \mathcal{F}_d( \kl)\) be the boundary map of \(\Ad \circ \rho\) (see Proposition~\ref{prop:G_makeP1}) and
\begin{equation*}
  \mathcal{N}_\rho = \bigcup_{\eta \in \partial_\infty \Gamma}\{ W \in \overX^{sba} \mid \xi_d(\eta) \subset W\}.
\end{equation*}
Then \(\Omega := \overX^{sba} \smallsetminus \mathcal{N}_\rho\) contains \(X = G/K\) and the action of \(\Gamma\) on \(\Omega\) is properly discontinuous and cocompact.
\end{theorem}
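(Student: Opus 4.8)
The plan is to reduce Theorem~\ref{thm:max_Satake} to the already-proven statements about $\OO(\kl)$ via the adjoint representation, exactly in the spirit of the proof of Theorem~\ref{thm:main} in Section~\ref{subsec:proof-thm-main}, but now controlling the bad set more carefully so that the quotient lives inside $\overX^{sba}$ itself rather than inside a product. First I would invoke Proposition~\ref{prop:G_makeP1}: since $\rho$ is $P_{\{\alpha_G\}}$-Anosov, the composition $\Ad\circ\rho:\Gamma\to\OO(\kl)$ is $P_d(\kl)$-Anosov, with boundary map $\xi_d:\partial_\infty\Gamma\to\mathcal{F}_d(\kl)$ and $\xi_d(\eta)=\Ad(g)\cdot\mathfrak{g}_{\chi_G}$ whenever $\xi(\eta)=g\cdot P_{\{\alpha_G,\alpha_G^\star\}}$. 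Note $\xi_d(\eta)$ is a $d$-dimensional $\kl$-isotropic subspace consisting of nilpotent elements (it is $\Ad(g)\cdot\mathfrak{g}_{\chi_G}$, and root spaces for the highest root are nilpotent).

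Next I would set up the containment $X = \Ad(G)\cdot\mathfrak{r}_\emptyset\subset\overX_\kl$ and its closure $\overX^{sba}$, and relate the bad set $\mathcal{N}_\rho$ of the theorem to the bad sets appearing in Section~\ref{sec:comp-anos-locally}. The key point is that for the Anosov representation $\Ad\circ\rho$ into $\OO(\kl)$, Proposition~\ref{prop:propernessPk} (with $i=d$) gives proper discontinuity on $\overX_\kl\smallsetminus\mathcal{W}^d_{\Ad\circ\rho}$, where $\mathcal{W}^d_{\Ad\circ\rho}=\bigcup_{L\in\mathcal{L}}\{W\in\overX_\kl\mid L\cap W\neq\{0\}\}$ and, by Lemma~\ref{lem:anosov-lim-set}, the limit set $\mathcal{L}=\mathcal{L}^{\mathcal{F}_d(\kl)}_{(\Ad\circ\rho)(\Gamma)}$ equals $\xi_d(\partial_\infty\Gamma)$. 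Meanwhile Lemma~\ref{lem:cocompactness} (with $i=d$; one must check $d<p-1$ when $p=q$, or otherwise handle the boundary case — this is a bookkeeping point about the signature of $\kl|_{\mathfrak{k}\times\mathfrak{k}}$ versus its complement) gives cocompactness on $\overX_\kl\smallsetminus\mathcal{V}_{\Ad\circ\rho}$, where $\mathcal{V}_{\Ad\circ\rho}=\bigcup_\eta\{W\in\overX_\kl\mid\xi_d(\eta)\subset W\}$. So on $\overX_\kl$ the relevant domain $\Omega_\kl$ is properly discontinuous and cocompact, and I would intersect with the closed invariant subset $\overX^{sba}$.

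The crux is then to identify $\mathcal{N}_\rho=\mathcal{N}_\rho\cap\overX^{sba}$ (as defined in the theorem, using the condition $\xi_d(\eta)\subset W$) with $\mathcal{W}^d_{\Ad\circ\rho}\cap\overX^{sba}=\mathcal{V}_{\Ad\circ\rho}\cap\overX^{sba}$, i.e.\ to show that for $W$ lying in $\overX^{sba}$ — hence $W=\Ad(g)\cdot\mathfrak{r}_\theta$ for some $g\in G$, $\theta\subset\Delta$, by Lemma~\ref{lem:orbit-Xsa} — the three conditions ``$\xi_d(\eta)\cap W\neq\{0\}$'', ``$\xi_d(\eta)\subset W$'', and ``$W+\xi_d(\eta)^{\perp_\kl}\neq\g$'' all coincide. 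Here is where Lemma~\ref{lem:ker_Xsa} does the decisive work: $\xi_d(\eta)=\Ad(g')\cdot\mathfrak{g}_{\chi_G}$ is spanned by nilpotent elements, and a nilpotent element of $\Ad(g)\cdot\mathfrak{r}_\theta$ must lie in $\Ad(g)\cdot\mathfrak{u}_\theta=\Ker(\kl|_{W\times W})$; combined with part~(1) of Lemma~\ref{lem:incid_iso_neg} and the $\kl$-isotropy of $\xi_d(\eta)$, a nonzero vector in the intersection forces $\xi_d(\eta)$ (being an irreducible orbit of a parabolic, or simply by continuity/dimension count) to lie inside $W$. I expect this collapse of the three incidence conditions on the stratified space $\overX^{sba}$ to be the main obstacle — it is exactly the functoriality phenomenon that makes $\overX^{sba}$ the right object. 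Once it is established, proper discontinuity follows from Proposition~\ref{prop:propernessPk} restricted to $\overX^{sba}$, and cocompactness from Lemma~\ref{lem:cocompactness} restricted to the closed invariant set $\overX^{sba}$, together with $X\subset\Omega$ because $\mathfrak{r}_\emptyset=\mathfrak{k}$ is negative definite for $\kl$ and so meets no $\kl$-isotropic line; this finishes the proof.
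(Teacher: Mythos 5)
Your overall strategy is exactly the paper's: pass to $\Ad\circ\rho:\Gamma\to\OO(\kl)$, get proper discontinuity from Proposition~\ref{prop:propernessPk} and cocompactness from Lemma~\ref{lem:cocompactness}, restrict to the closed invariant set $\overX^{sba}$, and reduce everything to showing that for $W\in\overX^{sba}$ and $L=\xi_d(\eta)\in\Ad(G)\cdot\g_{\chi_G}$ the conditions $L\cap W\neq\{0\}$ and $L\subset W$ coincide (and exclude $W\in X$). You also correctly isolate the first half of that reduction: a nonzero $v\in L\cap W$ is nilpotent, hence lies in $\Ker(\kl|_{W\times W})=\Ad(h)\cdot\mathfrak{u}_\theta$ by Lemma~\ref{lem:ker_Xsa}, so $W\notin X$ and the intersection happens inside the kernel.

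The genuine gap is the second half: from $L\cap\Ad(h)\cdot\mathfrak{u}_\theta\neq\{0\}$ you must conclude $L\subset\Ad(h)\cdot\mathfrak{u}_\theta$, and your proposed justifications do not deliver this. ``Continuity/dimension count'' cannot work: for general isotropic $d$-planes $L$ and general $W\in\overX_{\kl}$ nontrivial intersection does \emph{not} imply containment (e.g.\ in signature $(2,2)$ take $W=\mathrm{span}(e_1,e_2-e_3)$ and $L=\mathrm{span}(e_1,e_2)$), and Lemma~\ref{lem:incid_iso_neg}.(1) only yields $L\subset W$ if you already know $L\perp_{\kl}W$, which is precisely what is in question. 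The statement is true only because of the special Lie-theoretic position of the two subspaces, and the paper's argument (Lemma~\ref{lem:inter-impl-contained}) makes this precise via the Bruhat decomposition: writing $h^{-1}h'=p\tilde w p'$ one reduces to $\g_{w\cdot\chi_G}\cap\mathfrak{u}_\theta\neq\{0\}$, and since $\mathfrak{u}_\theta$ is a direct sum of root spaces and $\g_{w\cdot\chi_G}$ is a single root space, the directness of the root space decomposition forces $\g_{w\cdot\chi_G}\subset\mathfrak{u}_\theta$, hence $L\subset\Ad(h)\cdot\mathfrak{u}_\theta\subset W$. Your parenthetical ``irreducible orbit of a parabolic'' points vaguely in this direction but is not an argument; this step is the actual content of the theorem and needs to be supplied. (Your side remark about checking $d<p-1$ when the Killing form has split signature is a fair bookkeeping concern about the hypotheses of Lemma~\ref{lem:cocompactness}, which the paper glosses over, but it is not the main issue.)
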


\begin{proof}
By Proposition~\ref{prop:propernessPk}, the action of \(\Gamma\) via  \(\Ad\circ \rho\) on \(\overX_{\kl} \smallsetminus \mathcal{W}_{\Ad \circ \rho}\) is properly discontinuous. 
Thus the action of \(\Gamma\) on \(\overX^{sba} \smallsetminus ( \mathcal{W}_{\Ad \circ \rho} \cap \overX^{sba})\) is properly discontinuous. 

Let us prove that \(\mathcal{N}_\rho = \mathcal{W}_{\Ad \circ \rho} \cap \overX^{sba}\) and that \(\mathcal{N}_\rho\) does not intersect \(X\).
The inclusion \(\mathcal{N}_\rho \subset \mathcal{W}_{\Ad \circ \rho} \cap \overX^{sba}\) is obvious. Let now \(W \in \mathcal{W}_{\Ad \circ \rho} \cap \overX^{sba}\), \ie there is \(\eta \in \partial_\infty \Gamma\) such that \(W \cap \xi_d(\eta) \neq \{0\}\).
By Proposition~\ref{prop:G_makeP1} \(\xi_d(\eta) \in \Ad(G)\cdot \g_{\chi_G}\).
Lemma~\ref{lem:inter-impl-contained} implies that \(W\notin X\), hence \(\mathcal{N}_\rho \cap X=\emptyset\), and that \(\xi_d( \eta) \subset W\), \ie \(W\in \mathcal{N}_\rho\).
Since \(\mathcal{V}_{\Ad\circ \rho} \cap \overX^{sba} = \mathcal{N}_\rho\), Lemma~\ref{lem:cocompactness} implies that the action of \(\Gamma\) on \(\Omega\) is cocompact.
\end{proof}

\begin{lemma}
  \label{lem:inter-impl-contained}
  Let \(W\in\overX^{sba}\) and \(L\in \Ad(G)\cdot \g_{\chi_G} \subset \mathrm{Gr}_d( \g)\).
  If \(L\cap W \neq \{0\}\), then \(W\notin X\) and \(L \subset \Ker(\kl|_{W\times W})\subset W\).
\end{lemma}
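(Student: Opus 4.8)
I plan to prove Lemma~\ref{lem:inter-impl-contained} by combining the orbit description of $\overX^{sba}$ (Lemma~\ref{lem:orbit-Xsa}) with the rigidity of $L$ as a $G$-translate of the highest restricted root space. Write $L=\Ad(g_0)\cdot\g_{\chi_G}$ for some $g_0\in G$. Since $\chi_G$ is the highest restricted root, $\g_{\chi_G}$ lies in the nilradical $\mathfrak u_{\{\alpha_G\}}$ of $\mathfrak p_{\{\alpha_G\}}$, so every element of $\g_{\chi_G}$, hence of $L$, is nilpotent, and moreover $L$ is $\kl$-isotropic because $\kl(\g_{\chi_G},\g_{\chi_G})=0$ (as $\chi_G\neq-\chi_G$). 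I would also record, for use below, that the parabolic $P_{\{\alpha_G,\alpha_G^\star\}}$ stabilizes the subspace $\g_{\chi_G}$: by Proposition~\ref{prop:G_makeP1}.\eqref{item:alphaG} one has $(\chi_G,\beta)=0$ for every $\beta\in\mathrm{span}(\Delta\smallsetminus\{\alpha_G,\alpha_G^\star\})$, and combined with the maximality of $\chi_G$ this gives $[\mathfrak p_{\{\alpha_G,\alpha_G^\star\}},\g_{\chi_G}]\subset\g_{\chi_G}$. (This is also implicit in Proposition~\ref{prop:G_makeP1}.\eqref{item:Ano-alphaG}, where $\xi_d(\eta)=\Ad(g)\cdot\g_{\chi_G}$ is declared to depend only on the coset $g\cdot P_{\{\alpha_G,\alpha_G^\star\}}$.)

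Next I would reduce to a normal form for $W$. By Lemma~\ref{lem:orbit-Xsa}, $W=\Ad(g)\cdot\mathfrak r_\theta$ for some $g\in G$ and $\theta\subset\Delta$; since all assertions of the lemma are $\Ad(G)$-equivariant, I may replace $(W,L)$ by $\bigl(\Ad(g^{-1})\cdot W,\Ad(g^{-1})\cdot L\bigr)$ and assume $W=\mathfrak r_\theta=\mathfrak k_\theta\oplus\mathfrak u_\theta$ (with $\Ad(g^{-1})\cdot L$ still in $\Ad(G)\cdot\g_{\chi_G}$). By Lemma~\ref{lem:ker_Xsa}, a nilpotent element of $\mathfrak r_\theta$ lies in $\mathfrak u_\theta=\Ker(\kl|_{\mathfrak r_\theta\times\mathfrak r_\theta})$; since $L$ consists of nilpotents, $L\cap W=L\cap\mathfrak u_\theta$. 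Granting this is nonzero, $\mathfrak u_\theta\neq\{0\}$, hence $\theta\neq\emptyset$, hence $W=\mathfrak r_\theta$ is not in the orbit $\Ad(G)\cdot\mathfrak r_\emptyset=X$ (the orbits of Lemma~\ref{lem:orbit-Xsa} being disjoint): this proves $W\notin X$. It then suffices to prove $L\subset\mathfrak u_\theta$, for this yields $L\subset\mathfrak u_\theta=\Ker(\kl|_{W\times W})\subset W$, the remaining assertion.

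To prove $L\subset\mathfrak u_\theta$ — the key step — I would apply the Bruhat decomposition $G=\bigsqcup_w P_\theta\,\dot w\,P_{\{\alpha_G,\alpha_G^\star\}}$, with $\dot w\in N_K(\aaa)$ representing an element $w$ of the restricted Weyl group, and write $g_0=p\,\dot w\,p'$. Since $p'$ stabilizes $\g_{\chi_G}$ and $\Ad(\dot w)$ carries $\g_{\chi_G}$ onto the restricted root space $\g_{w\chi_G}$, we obtain $L=\Ad(p)\cdot\g_{w\chi_G}$. Now $\g_{w\chi_G}$ is a single restricted root space while $\mathfrak u_\theta$ is a sum of restricted root spaces, so $\g_{w\chi_G}\cap\mathfrak u_\theta$ equals $\g_{w\chi_G}$ or $\{0\}$; applying $\Ad(p)$, which preserves $\mathfrak u_\theta$ because $p\in P_\theta$, shows that $L\cap\mathfrak u_\theta$ equals $L$ or $\{0\}$. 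As $L\cap W=L\cap\mathfrak u_\theta\neq\{0\}$ by hypothesis, it must be $L$, \ie $L\subset\mathfrak u_\theta$, which completes the proof.

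The step I expect to be delicate is this last dichotomy. For a \emph{generic} $d$-dimensional isotropic subspace, the intersection with $\mathfrak u_\theta$ can take intermediate dimensions, so the alternative ``$L\cap\mathfrak u_\theta\in\{\{0\},L\}$'' genuinely uses that $L$ is a $G$-conjugate of a restricted root space; it is the Bruhat reduction that collapses $L$ to the single root space $\g_{w\chi_G}$, where indivisibility of root spaces forces ``contained or disjoint''. The remaining verifications are routine: that $\g_{\chi_G}$ consists of nilpotent elements and is stabilized by $P_{\{\alpha_G,\alpha_G^\star\}}$, and that Lemma~\ref{lem:ker_Xsa} and the Bruhat decomposition hold for $G$ in the generality assumed in the paper (a finite union of connected components of $\mathbf G(\RR)$).
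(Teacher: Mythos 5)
Your proposal is correct and follows essentially the same route as the paper's proof: nilpotency of the elements of $L$ together with Lemma~\ref{lem:ker_Xsa} reduces the hypothesis to $L\cap\mathfrak{u}_\theta\neq\{0\}$, and a Bruhat decomposition collapses $L$ to $\Ad(p)\cdot\g_{w\cdot\chi_G}$ with $p$ preserving $\mathfrak{u}_\theta$, whence directness of the restricted root space decomposition forces containment. The only (harmless) differences are cosmetic: you normalize $W=\mathfrak{r}_\theta$ and invoke the Bruhat decomposition for the pair $(P_\theta,P_{\{\alpha_G,\alpha_G^\star\}})$, whereas the paper keeps the conjugating element $h$ and uses the decomposition relative to the minimal parabolic $P_\Delta$.
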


\begin{proof}[Proof of Lemma~\ref{lem:inter-impl-contained}]
  Since the elements of~\(L\) are nilpotent, the hypothesis implies that \(\Ker(\kl|_{W\times W})\neq \{0\}\), hence \(W\notin X\).

  By Lemma~\ref{lem:orbit-Xsa} there exist \(\theta \subset \Delta\) and \(h\in G\) such that \(W = \Ad(h)\cdot \mathfrak{r}_\theta\) and there exists \(h'\in G\) such that \(L = \Ad(h')\cdot \mathfrak{g}_{\chi_G}\).
Lemma~\ref{lem:ker_Xsa} implies that the intersection of \(\Ker(\kl|_{W\times W}) = \Ad(h) \cdot \mathfrak{u}_\theta \) and \(L\) is nontrivial.
We need to prove that \(L \subset \Ad(h) \cdot \mathfrak{u}_\theta\).

The element \(g= h^{-1} h'\) admits a Bruhat decomposition \(g = p \tilde{w} p'\), \ie \(p\) and \(p'\) are in the minimal parabolic subgroup \(P_\Delta\) and \(\tilde{w}\) belongs to \(N_K(\aaa)\) (see \eg~\cite[Th.\,7.40]{Knapp_LieGrp}). Let \(w\) be the class of \(\tilde{w}\) in~\(W = N_K(\aaa) / Z_K(\aaa)\).
Thus
\[ L = \Ad(h) \Ad(g) \cdot \mathfrak{g}_{\chi_G} = \Ad(h) \Ad(p) \Ad( \tilde{w}) \cdot \mathfrak{g}_{\chi_G} \]
(since \(\Ad(p')\cdot \mathfrak{g}_{\chi_G} = \mathfrak{g}_{\chi_G}\)) and \(L= \Ad(hp) \cdot \mathfrak{g}_{w\cdot \chi_G}\).
Also \(\Ad(h) \cdot \mathfrak{u}_\theta = \Ad(hp) \cdot \mathfrak{u}_\theta\).
Hence the Lie algebra \(\mathfrak{u}_\theta\) has a nontrivial intersection with \(\mathfrak{g}_{w\cdot \chi_G}\).
Since the root space decomposition is direct, this is possible if and only if \(\mathfrak{g}_{w\cdot \chi_G} \subset \mathfrak{u}_\theta\).
This implies that \(L\subset \Ad(h) \cdot \mathfrak{u}_\theta \subset W\).
\end{proof}

\begin{theorem}
  \label{thm:max-satak-semisimple}
  Let \(G\) be a real semisimple Lie group and let \(\Phi\subset \Delta\) be the set consiting of the simple restricted roots \(\alpha_{G'}\) given by Proposition~\ref{prop:G_makeP1} (see Table~\ref{table3}) for all the simple factors \(G'\) of~\(G\). 
  Let $\theta\subset\Delta$ be nonempty with \(\Phi\cap \theta \neq \emptyset\).
  For any word hyperbolic group $\Gamma$ and any \(P_\theta\)-Anosov representation \(\rho:\Gamma \to G\), the Riemannian locally symmetric space \(\rho(\Gamma) \backslash G/K\) admits a compactification modeled on the maximal Satake compactification of \(G/K\).
\end{theorem}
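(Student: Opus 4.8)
The plan is to deduce the statement from the simple case, Theorem~\ref{thm:max_Satake}, by splitting the symmetric space into a product of irreducible factors. Since $X=G/K$, the subalgebra compactification $\overX^{sba}$, the Cartan projection, the flag varieties $\F_\theta$, and the property of being $P_\theta$-Anosov all depend only on the adjoint group of~$G$, I may assume $G = G_1\times\cdots\times G_k$ is a direct product of adjoint simple Lie groups; then $K = K_1\times\cdots\times K_k$, $X = X_1\times\cdots\times X_k$ with $X_j = G_j/K_j$, and $\Delta = \Delta_1\sqcup\cdots\sqcup\Delta_k$ with $\Delta_j$ the simple restricted roots of~$G_j$. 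The first step is the product formula
\[ \overX^{sba} \;=\; \overX_1^{sba}\times\cdots\times\overX_k^{sba} , \]
where $\overX_j^{sba}$ is the subalgebra compactification of~$X_j$. To prove it I would use the orbit description of Lemma~\ref{lem:orbit-Xsa}: writing $\g = \g_1\oplus\cdots\oplus\g_k$ and $\kl = \kl_1\oplus\cdots\oplus\kl_k$, one has $\mathfrak{r}_\theta = \mathfrak{r}_{\theta\cap\Delta_1}\oplus\cdots\oplus\mathfrak{r}_{\theta\cap\Delta_k}$ for every $\theta\subset\Delta$, and the direct-sum map $(W_1,\dots,W_k)\mapsto W_1\oplus\cdots\oplus W_k$ is a closed, $\Ad(G)$-equivariant embedding of $\prod_j\mathrm{Gr}_{\dim\mathfrak{k}_j}(\g_j)$ into $\mathrm{Gr}_{\dim\mathfrak{k}}(\g)$ taking $\prod_j\overX_j^{sba}$ onto $\overX^{sba}$; by Proposition~\ref{prop:Satake_subalgebra} this records the familiar fact that the maximal Satake compactification of a product is the product of those of the factors.

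Next I would choose $j_0$ with $\alpha_{G_{j_0}}\in\Phi\cap\theta$, which exists by hypothesis, and let $\rho_{j_0} : \Gamma\to G_{j_0}$ be the composition of $\rho$ with the projection $G\to G_{j_0}$. Since the Cartan projection and the flag varieties of $G$ split as products over the factors, $\rho$ being $P_\theta$-Anosov forces $\rho_{j_0}$ to be $P_{\theta\cap\Delta_{j_0}}$-Anosov, hence $P_{\{\alpha_{G_{j_0}}\}}$-Anosov because $\{\alpha_{G_{j_0}}\}\subset\theta\cap\Delta_{j_0}$ (a $P_{\theta'}$-Anosov representation is $P_{\theta''}$-Anosov whenever $\theta''\subset\theta'$). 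Applying Theorem~\ref{thm:max_Satake} to the simple group $G_{j_0}$ and to $\rho_{j_0}$ produces a closed $\rho_{j_0}(\Gamma)$-invariant set $\mathcal{N}_{j_0}\subset\overX_{j_0}^{sba}\smallsetminus X_{j_0}$ on whose complement $\Omega_{j_0} = \overX_{j_0}^{sba}\smallsetminus\mathcal{N}_{j_0}\supset X_{j_0}$ the group $\rho_{j_0}(\Gamma)$ acts properly discontinuously and cocompactly.

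Finally I would set $\mathcal{N} = \mathcal{N}_{j_0}\times\prod_{j\neq j_0}\overX_j^{sba}\subset\overX^{sba}$, so that $\overX^{sba}\smallsetminus\mathcal{N} = \Omega_{j_0}\times\prod_{j\neq j_0}\overX_j^{sba}$. This set is open in $\overX^{sba}$, contains $X$, is disjoint from $\mathcal{N}$ which misses $X$ (since $\mathcal{N}_{j_0}\cap X_{j_0}=\emptyset$), and is $\Gamma$-invariant for the action via~$\rho$ ($\mathcal{N}_{j_0}$ being $\rho_{j_0}(\Gamma)$-invariant and each $\overX_j^{sba}$, $j\neq j_0$, being $G_j$-invariant). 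The $\Gamma$-action on it is properly discontinuous, being the product of the properly discontinuous action on $\Omega_{j_0}$ with an action on the compact space $\prod_{j\neq j_0}\overX_j^{sba}$; and it is cocompact, since if $C_{j_0}\subset\Omega_{j_0}$ is compact with $\Gamma\cdot C_{j_0} = \Omega_{j_0}$, then $C_{j_0}\times\prod_{j\neq j_0}\overX_j^{sba}$ is compact with $\Gamma$-orbit all of $\overX^{sba}\smallsetminus\mathcal{N}$. Therefore $\Gamma\backslash(\overX^{sba}\smallsetminus\mathcal{N})$ is a compactification of $\rho(\Gamma)\backslash X$ locally modeled on $\overX^{sba}$, which by the product formula and Proposition~\ref{prop:Satake_subalgebra} is the maximal Satake compactification of $G/K$. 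The only ingredient carrying real content is Theorem~\ref{thm:max_Satake} applied to the one good factor $G_{j_0}$; I expect the only delicate point to be the verification of the product formula for $\overX^{sba}$ --- namely that the orbit stratification of $\overX^{sba}$ matches the product stratification under the direct-sum embedding of Grassmannians --- together with the bookkeeping that reduces the general $G$ to a product of adjoint simple groups.
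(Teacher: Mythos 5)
Your proposal is correct and is essentially the paper's own argument: project $\rho$ to a simple factor $G_{j_0}$ with $\alpha_{G_{j_0}}\in\Phi\cap\theta$, apply Theorem~\ref{thm:max_Satake} there to get $\Omega_{j_0}$, and take the product with the (compact) maximal Satake compactifications of the remaining factors, properness and cocompactness passing to the product for exactly the reasons you give. The only difference is presentational: you verify the product decomposition of $\overX^{sba}$ by hand via the Grassmannian model and Lemma~\ref{lem:orbit-Xsa}, whereas the paper simply invokes the standard fact that a Satake compactification of a product is the product of Satake compactifications of the factors.
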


\begin{proof}[Proof of Theorem~\ref{thm:max-satak-semisimple}]
  There is a simple factor \(G'\) of \(G\) such that the projection \(\rho'\) of \(\rho\) to \(G'\) is \(P_{\{\alpha_{G'}\}}\)-Anosov.
The maximal Satake compactification \(\overX\) of \(X=G/K\) is the product \(\overX' \times \overX''\) of the maximal Satake compactification of \(G'/K'\) and of the maximal Satake compactification of the Riemannian symmetric space associated with the other factors of \(G\) (see Section~\ref{sec:satake-comp-1}). By Theorem~\ref{thm:max_Satake} there is an open set \(\Omega'\subset \overX'\) containing the Riemannian symmetric space of~\(G'\) such that the action of \(\Gamma\) via \(\rho\) on \(\Omega'\) is properly discontinuous and cocompact. It follows that the action of \(\Gamma\) via \(\rho\) on \(\Omega' \times \overX''\)  is properly discontinuous and cocompact.
\end{proof}

\subsection{Complex orthogonal groups}
\label{sec:compl-orth-groups}

Let \(\OO(b^{\CC})\) be the orthogonal group of a nondegenerate complex symmetric bilinear form \( b^{\CC}\) on a complex vector space \(V\) of dimension~\(n\).
 
 The parabolic subgroup \(P_1(b^{\CC})\) is defined as the stabilizer of the line \(\CC e_1\) where \(b^\CC(e_1, e_1)=0\); it is conjugate to its opposite.
The homogeneous space \(\mathcal{F}_1(b^{\CC}) = \OO(b^\CC)/P_1(b^\CC)\) is the set of \(b^\CC\)-isotropic complex lines in~\(V\).

The Riemannian symmetric space \(X_{b^\CC}\) of \(\OO(b^\CC)\) can be realized as a subset in the Grassmanian \(\mathrm{Gr}_{n}^{\RR}(V)\) of \(n\)-dimensional real subspaces of~\(V\), with compactification
\begin{equation} \label{eqn:overXbC}
  \overX_{b^\CC} = \{ W \in \mathrm{Gr}_{n}^{\RR}( V) \mid 
  b^{\CC}( W\times W)\subset \RR, \text{ and } 
  b^{\CC}(w,w) \leq 0 \, \forall w \in W\}.
\end{equation}
In fact \(\OO(b^\CC)\subset \OO( b)\) where \(b= \mathrm{Re}(b^\CC)\) and there is an inclusion \(X_{b^\CC}\subset X_{b}\). The compactification \(\overX_{b^\CC}\) is precisely the closure of \(X_{b^\CC}\) in \(\overX_{b}\).
It is isomorphic to a minimal Satake compactification if \(n\) is odd and to a generalized Satake compactification if \(n\) is even.

\begin{theorem}\label{thm:comp_loc_sym_spc_onC}
Let \(b^\CC\) be a nondegenerate complex symmetric bilinear form on a complex vector space \(V\) of dimension \(n\geq 3\), let $\Gamma$ be a word hyperbolic group, and let \(\rho:\Gamma\to \OO(b^\CC)\) be a \(P_1(b^{\CC})\)-Anosov representation with boundary map \(\xi: \partial_\infty \Gamma \to \mathcal{F}_1(b^{\CC}) \).
Let 
  \begin{equation*}
    \mathcal{N}_\rho = \bigcup_{\eta \in \partial_\infty \Gamma} \bigl\{ W \in \overX_{b^\CC} \mid \xi(\eta) \subset W \bigr\}.
  \end{equation*}
  Then the action of \(\Gamma\) on \(\Omega = \overX_{b^\CC} \smallsetminus \mathcal{N}_\rho\) is properly discontinuous and cocompact.
  The set \(\Omega\) contains the Riemannian symmetric space \(X_{b^\CC}\) and \(\rho(\Gamma)\backslash \Omega\) is a smooth compactification of \(\rho(\Gamma)\backslash X_{b^\CC}\).
\end{theorem}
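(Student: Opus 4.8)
The plan is to mimic the structure of the proof of Theorem~\ref{thm:comp_loc_sym_spc_opq}, reducing the complex orthogonal situation to the real orthogonal one via the inclusion $\OO(b^\CC)\subset\OO(b)$ with $b=\mathrm{Re}(b^\CC)$. First I would fix the basic dictionary: a $b^\CC$-isotropic complex line $\ell=\CC e$ with $b^\CC(e,e)=0$ has underlying real $2$-plane $\ell_\RR$ that is $b$-isotropic (since $b(e,e)=\mathrm{Re}\,b^\CC(e,e)=0$ and $b(e,\sqrt{-1}e)=\mathrm{Re}\,b^\CC(e,\sqrt{-1}e)=-\mathrm{Im}\,b^\CC(e,e)=0$), so the composite $\xi_\RR:\partial_\infty\Gamma\to\mathcal{F}_2(b)$, $\eta\mapsto\xi(\eta)_\RR$, takes values in the space of $b$-isotropic real $2$-planes, and it is continuous, $(\rho\,\text{viewed in }\OO(b))$-equivariant, and still transverse and dynamics-preserving in the appropriate sense. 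The key input is that $\rho:\Gamma\to\OO(b^\CC)\hookrightarrow\OO(b)$ is $P_2(b)$-Anosov; this should follow from a Proposition~\ref{prop:theta-comp-Anosov}-type argument, observing that the $2$-dimensional highest-weight data for the $\OO(b^\CC)$-action on $\bigwedge$ matches the $\alpha_2$-compatible data for $\OO(b)$ — alternatively, one checks directly that $\langle\alpha_2,\mu_{\OO(b)}(g)\rangle$ controls $\langle\alpha_1,\mu_{\OO(b^\CC)}(g)\rangle$ for $g\in\OO(b^\CC)$, using that eigenvalues come in complex-conjugate pairs. (One may need the hypothesis $n\geq 3$, equivalently the real form has signature avoiding degenerate small cases, to ensure $P_2(b)$ is a genuine proper parabolic of $\OO(b)$ and that $(p,q)\neq(1,1),(2,2)$.)

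Next I would identify the bad sets. Since $\overX_{b^\CC}$ is the closure of $X_{b^\CC}$ inside $\overX_b$, and $X_{b^\CC}\subset X_b$, the set $\mathcal N_\rho$ defined in the statement equals $\overX_{b^\CC}\cap\mathcal V_\rho$ where $\mathcal V_\rho$ is the cocompactness bad set of Lemma~\ref{lem:cocompactness} for the real $P_2(b)$-Anosov representation with boundary map $\xi_\RR$, using that $\xi(\eta)\subset W$ (as a complex subspace, for $W$ a real subspace closed under $\sqrt{-1}$... but $W\in\overX_{b^\CC}$ need not be complex) — more carefully, $\xi(\eta)\subset W$ means $\ell_\RR\subset W$, i.e. $\xi_\RR(\eta)\subset W$. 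Likewise for properness: $\mathcal N_\rho$ should coincide with $\overX_{b^\CC}\cap\mathcal W^2_\rho$, where $\mathcal W^2_\rho$ is the properness bad set of Proposition~\ref{prop:propernessPk} for $i=2$, since for the $2$-planes $L=\xi_\RR(\eta)$ arising from the limit set one has $L\cap W\neq\{0\}\iff L\subset W$ on $\overX_{b^\CC}$; this last equivalence is the analogue of Lemma~\ref{lem:incid_iso_neg}.\eqref{item:incid} and is where the complex structure is used — a $2$-dimensional real $b$-isotropic subspace that is actually a complex line, when it meets $W\in\overX_{b^\CC}$ nontrivially in a single real vector $y$, forces (via the conjugate $\sqrt{-1}y$ and a signature count on $y^{\perp_b}\cap(\sqrt{-1}y)^{\perp_b}$ modulo the plane) the whole plane into $W$. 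Granting these two identifications, Proposition~\ref{prop:propernessPk} gives proper discontinuity of $\Gamma$ on $\overX_{b^\CC}\smallsetminus\mathcal N_\rho$ and $X_{b^\CC}\subset\overX_{b^\CC}\smallsetminus\mathcal N_\rho$, while Lemma~\ref{lem:cocompactness} gives cocompactness; since $\overX_{b^\CC}$ is a closed (indeed smooth, being an $\OO(b^\CC)$-orbit closure of the stated form) submanifold of $\overX_b$, the quotient $\rho(\Gamma)\backslash\Omega$ is a smooth compactification of $\rho(\Gamma)\backslash X_{b^\CC}$.

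The main obstacle I anticipate is the precise incidence lemma over $\RR$ for subspaces carrying a complex structure: showing that for $W\in\overX_{b^\CC}$ and $L=\ell_\RR$ the real $2$-plane of a $b^\CC$-isotropic complex line, one has $L\cap W\neq\{0\}\iff L\subset W$, and relating this to the "$+L^{\perp_b}\neq V$" condition so that Lemma~\ref{lem:propernessPk-seq} applies verbatim. The subtlety is that $W$ is only a real subspace of $V$ with $b^\CC(W\times W)\subset\RR$ and $b^\CC|_W\leq 0$, so $W$ need not be $\sqrt{-1}$-stable; one must argue that if $y\in L\cap W$ then the conjugate pair structure together with the negativity constraint on $W$ forces $\sqrt{-1}y\in W$ as well. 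I expect this to follow from a dimension/signature argument on the quotient $\ell^{\perp_{b^\CC}}/\ell$ analogous to the proof of Lemma~\ref{lem:incid_iso_neg}, now carried out at the level of the complexified form restricted to $W+\CC y$; once this is in place, the rest is a faithful transcription of Sections~\ref{sec:proof-prop-theor} and~\ref{sec:proof-cocomp-theor-1} with $i=2$ and with $\mathrm{Gr}_n^\RR(V)$ in place of $\mathrm{Gr}_q(V)$.
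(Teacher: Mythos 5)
Your proposal is correct and follows essentially the same route as the paper: reduce to the $P_2(b)$-Anosov representation $\Gamma\to\OO(b)$ with $b=\mathrm{Re}(b^\CC)$, apply Proposition~\ref{prop:propernessPk} and Lemma~\ref{lem:cocompactness}, and identify $\mathcal{N}_\rho$ with the intersections of $\overX_{b^\CC}$ with the real bad sets via an incidence lemma for complex isotropic lines. The key point you flag is exactly the paper's Lemma~\ref{lem:nullW-C-space}, and your anticipated argument (use $\sqrt{-1}y$ together with the negativity constraint, reducing to Lemma~\ref{lem:incid_iso_neg}) is precisely how the paper proves it.
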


\begin{proof}[Proof of Theorem~\ref{thm:comp_loc_sym_spc_onC}]
  Denote by \(\tau\) the natural injection of \(\OO(b^\CC)\) into \(\OO(b)\) where \(b= \mathrm{Re}( b^\CC)\). 
The representation \(\tau \circ \rho\) is \(P_2(b)\)-Anosov (see Proposition~\ref{prop:PthetaG_P1opq} and Lemma~\ref{lem:theta-comp-GL-Opq}). 

By Proposition~\ref{prop:propernessPk} the action of \(\Gamma\) on \(\overX_{b} \smallsetminus \mathcal{W}_{\tau \circ \rho}\) is properly discontinuous and thus the action of \(\Gamma\) via \(\rho\) on \(\overX_{b^\CC} \smallsetminus ( \mathcal{W}_{\tau\circ \rho}\cap \overX_{b^\CC})\) is as well properly discontinuous. 
Lemma~\ref{lem:nullW-C-space} implies that \(\mathcal{W}_{\tau\circ \rho}\cap \overX_{b^\CC} = \mathcal{N}_\rho\).

By Lemma~\ref{lem:cocompactness} the action of \(\Gamma\) on \(\overX_{b} \smallsetminus \mathcal{V}_{\tau \circ \rho}\) is cocompact and thus the action of \(\Gamma\) via \(\rho\) on \(\overX_{b^\CC} \smallsetminus ( \mathcal{V}_{\tau\circ \rho}\cap \overX_{b^\CC})\) is as well cocompact.
Since \(\mathcal{V}_{\tau\circ \rho}\cap \overX_{b^\CC} = \mathcal{N}_\rho\) the theorem follows.
\end{proof}

\begin{lemma}\label{lem:nullW-C-space}
  If \(W\in \overX_{b^\CC}\) then \(\Ker(b^{\CC}|_{W\times W})\) is a \(\CC\)-vector subspace of~\(\CC^n\).

  If \(L\in \mathcal{F}_1(b^{\CC})\), then \(L\cap W\neq \{0\}\) \(\Leftrightarrow\) \(L\subset W\).
\end{lemma}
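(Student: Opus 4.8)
The plan is to translate both statements into the language of the underlying real form $b=\mathrm{Re}(b^\CC)$, which is a nondegenerate symmetric bilinear form of signature $(n,n)$ on $V$ regarded as a $2n$-dimensional real vector space, and then to apply Lemma~\ref{lem:incid_iso_neg}.\eqref{item:nullW}. First I would record that any $W\in\overX_{b^\CC}$ lies in $\overX_b$: by \eqref{eqn:overXbC} the form $b^\CC|_{W\times W}$ is real-valued and satisfies $b^\CC(w,w)\le 0$ for all $w\in W$, so $b|_{W\times W}=b^\CC|_{W\times W}$ and $W$ is an $n$-dimensional nonpositive subspace for~$b$; in particular $W$ meets the hypotheses of Lemma~\ref{lem:incid_iso_neg} (with $p=q=n$), and $\Ker(b^\CC|_{W\times W})=\Ker(b|_{W\times W})$ as real subspaces of~$W$.

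For the first assertion, since $\Ker(b^\CC|_{W\times W})$ is a priori only a real subspace of $V$, it suffices to show it is stable under multiplication by~$i$; together with real-linearity this yields a $\CC$-subspace of $\CC^n$. Given $y$ in this kernel, I would note that $b^\CC(y,y)=0$, whence $b^\CC(iy,iy)=-b^\CC(y,y)=0$ and $b(iy,iy)=0$, and that $b(x,iy)=\mathrm{Re}\bigl(i\,b^\CC(x,y)\bigr)=0$ for every $x\in W$ because $b^\CC(x,y)=0$. The converse part of Lemma~\ref{lem:incid_iso_neg}.\eqref{item:nullW} then forces $iy\in W$, and since $b^\CC(x,iy)=i\,b^\CC(x,y)=0$ for all $x\in W$ we conclude $iy\in\Ker(b^\CC|_{W\times W})$, as desired.

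For the second assertion, the implication $L\subset W\Rightarrow L\cap W\neq\{0\}$ is trivial. Conversely, I would pick $0\neq y\in L\cap W$; as $L$ is $b^\CC$-isotropic, $b^\CC(y,y)=0$ and hence $b(y,y)=0$, so Lemma~\ref{lem:incid_iso_neg}.\eqref{item:nullW} places $y$ in $\Ker(b|_{W\times W})=\Ker(b^\CC|_{W\times W})$, which is a complex subspace by the first assertion; therefore $L=\CC y\subset W$. This is a short argument with no genuine obstacle; the only points deserving attention are checking that a $W\in\overX_{b^\CC}$ really does meet the dimension and sign hypotheses of Lemma~\ref{lem:incid_iso_neg}, and using the $\CC$-bilinearity of $b^\CC$ cleanly when passing between $b^\CC(x,y)=0$ and $b(x,iy)=0$.
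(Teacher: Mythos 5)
Your proof is correct and follows essentially the same route as the paper: both reduce to $b=\mathrm{Re}(b^\CC)$, use the converse direction of Lemma~\ref{lem:incid_iso_neg}.\eqref{item:nullW} to show that $\Ker(b^{\CC}|_{W\times W})$ is stable under multiplication by $\sqrt{-1}$, and then deduce $L\subset\Ker(b^{\CC}|_{W\times W})\subset W$ from the fact that $L$ is a complex line. Your preliminary check that $W$ satisfies the hypotheses of Lemma~\ref{lem:incid_iso_neg} with $p=q=n$ is a useful detail the paper leaves implicit.
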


\begin{proof}[Proof of Lemma~\ref{lem:nullW-C-space}]
  The kernel \(\Ker(b^{\CC}|_{W\times W})\) is a real vector space, we need to prove that it is stable by multiplication by \(\sqrt{-1}\). 
Let \(z\in \Ker(b^{\CC}|_{W\times W})\), then \(b^{\CC}(z,z)=0\) and \(b^{\CC}(x,z)= 0\) for all \(x\in W\). 
Set \(y= \sqrt{-1} z \) and \(b = \mathrm{Re}(b^{\CC})\).
One has \(b(y,y)= 0 \) and \(b(x,y)=0\) for all \(x\in W\). 
By Lemma~\ref{lem:incid_iso_neg}, \(y\in W\) and \(y\) belongs to \(\Ker(b^{\CC}|_{W\times W})\).

If \(L\cap W \neq \{0\}\) then \(L \cap \Ker(b^{\CC}|_{W\times W})\neq \{0\}\) and \(L\subset \Ker(b^{\CC}|_{W\times W})\subset W\) since \(L\) has complex dimension~\(1\).
\end{proof}

\appendix

\section{Satake compactifications}
\label{sec:satake-comp}

\subsection{Satake compactifications}\label{sec:satake-comp-1}

In this section we briefly review the construction of the Satake compactification of a Riemannian symmetric space $X = G/K$, which was originally defined in \cite{Satake_annals}.
We denote by $\mathcal{H}_n$ the space of Hermitian $(n\times n)$ matrices over~$\CC$.

Let $\tau: G \to \PSL(n,\CC)$ be an irreducible projective representation with finite kernel.
We may assume that \(\tau(K)\subset \PSU(n)\). 
By definition, the \emph{Satake compactification} $\overX_\tau$ of $X$ associated with~$\tau$ is the closure in $\PP(\mathcal{H}_n)$ of the image of $X$ under the embedding \(X \rightarrow \PP(\mathcal{H}_n)\) given by $gK \mapsto \RR({\tau}(g) {\tau}(g)^*)$, where \(M^*\) is the transpose-conjugate of a matrix \(M\).

The structure of the Satake compactification $\overX_\tau$ as a $G$-space only depends on the support
\[ \theta_\tau = \{ \alpha \in \Delta \mid ( \chi_\tau , \alpha) >0\} \]
of the highest weight \(\chi_\tau\) of the irreducible representation~$\tau$.
Satake compactifications have the following properties:
\begin{enumerate}
  \item The compactification $\overX_{\tau}$ has finitely many $G$-orbits, including a unique open $G$-orbit, namely $X= G/K$, and a unique closed orbit, which identifies with $G/P_{\theta_\tau}$.
  \item \label{item:Satake-surj} If $\theta_{\tau'} \subset \theta_\tau$ then there exists a continuous (hence proper) surjective \(G\)-equivariant map \(\pi_{\tau, \tau'}: \overX_\tau \to \overX_{\tau'}.\)
\item Every Satake compactification of a product is a product of Satake compactifications.
\end{enumerate}
By \eqref{item:Satake-surj}, the Satake compactification $\overX_{\tau}$ for \(\theta_\tau = \Delta\) surjects onto any Satake compactification $\overX_{\tau'}$ of~$X$; it is called the \emph{maximal} Satake compactification of~$X$.
On the other hand, Satake compactifications of the form $\overX_{\tau}$ for \(\sharp \theta_\tau =1\) are called \emph{minimal} Satake compactifications.
The maximal Satake compactification of~$X$ is a manifold with corners \cite[Prop.\,I.19.27]{Borel_Ji}.
The set \(\theta_\tau\) will be called the \emph{support} of the Satake compactification~\(\overX_\tau\).

\subsection{Orbits description}
\label{sec:orbits-description}

The finitely many orbits of a Satake compactification are described by the some combinatorial data associated with the irreducible representation \(\tau\).
As our convention for parabolic groups is opposite to~\cite{Borel_Ji}, the terminology and description of the orbits and the stabilizers has to be adapted from the classical case.

For any subset \(\theta \subset \Delta\) the parabolic algebra \(\mathfrak{p}_\theta\) is the direct sum
\begin{equation*}
  \mathfrak{p}_\theta = \mathfrak{u}_\theta \oplus \aaa_\theta \oplus \mathfrak{m}_\theta
\end{equation*}
where \(\aaa_\theta = \bigcap_{\alpha \in \Delta \smallsetminus\theta} \Ker(\alpha)\), \(\mathfrak{m}_\theta = \mathfrak{z}_{\mathfrak{k}}(\aaa) \oplus \aaa^\theta \oplus \bigoplus_{\alpha \in \Sigma \cap \mathrm{span}( \Delta \smallsetminus \theta)} \mathfrak{g}_\alpha\), and \(\aaa^\theta = \aaa \cap (\aaa_\theta)^{\perp_{\kl}}\).
The corresponding Lie groups are denoted by \(U_\theta\), \(A_\theta\) and \(M_\theta\).
The map \(U_\theta \times A_\theta \times M_\theta \to P_\theta \mid (u,a,m)\mapsto uam\) is a diffeomorphism and we will simply write in the sequel \(P_\theta = U_\theta A_\theta M_\theta\).

A subset \(\theta \subset \Delta\) will be said \emph{\(\tau\)-admissible} if the graph with vertex sets \((\Delta \smallsetminus \theta) \cup \{ \chi_\tau\}\) and edges between every pairs with a non zero scalar products is connected.
(One usually says that \(\Delta\smallsetminus \theta\) is \(\tau\)-connected.)

For such a subset let \(\theta^\vee = \{ \alpha \in \Delta \mid \exists \beta \in (\Delta \smallsetminus \theta) \cup \{\chi_\tau\},\, (\alpha, \beta) \neq 0\}\) be the set of the elements of \(\Delta\) being non-orthogonal to an element in  \((\Delta \smallsetminus \theta) \cup \{\chi_\tau\}\).
The set \(\theta^{\ddagger} = \theta \cap \theta^\vee\) is called the \emph{\(\tau\)-nucleus} of \(\theta\).
(The usual terminology says that \(\Delta \smallsetminus \theta^\cap\) is the \(\tau\)-saturation of \(\Delta\).)
Note that \(M_{\theta^{\ddagger}}\) is the almost product of \(M_\theta\) and \(M_{\theta^\vee}\) and that these two last groups commute.

The Satake compactification \(\overX_\tau\) admits the following description:
\begin{asparaenum}[(a)]
  \item it is the disjoint union of the \(G\)-orbits of points \(x_{\theta}\) over the \(\tau\)-admissible sets \(\theta\subset \Delta\);
  \item the stabilizer of \(x_{\theta}\) is the product \(U_{\theta^{\ddagger}} A_{\theta^{\ddagger}} M_{\theta^\vee} (K \cap M_\theta)\); in particular is contained in \(P_{\theta^{\ddagger}}\);
  \item the orbit \(G \cdot x_{ \theta}\) fibers over the flag manifold \(G/ P_{\theta^{\ddagger}} = \mathcal{F}_{\theta^{\ddagger}}\) and the fibers are isomorphic to \(M_\theta / (K\cap M_\theta)\), \ie to the Riemannian symmetric space associated with the reductive group~\(M_\theta\);
  \item the orbit \(G\cdot x_{\emptyset}\) is the copy of the Riemannian symmetric space \(G/K\);
  \item the unique closed orbit is \(G\cdot x_{ \Delta}\).
\end{asparaenum}
In order to describe the topology on \(\overX_\tau\), it is enough to understand the closure of the Weyl chamber.
Let \((H_n)_{n\in \NN}\) be a sequence in \(\overline{\aaa}^+\), then the sequence \((\exp( H_n)\cdot x_{\emptyset})_{n\in \NN}\) converges in \(\overX_\tau\) if and only if there exists a \(\tau\)-admissible set \(\theta\subset \Delta\) such that
\begin{enumerate}[(i)]
  \item For each \(\alpha\in \Delta \smallsetminus \theta\) the sequence \((\langle{\alpha, H_j}\rangle)_{n\in \NN}\) converges to some \(t_\alpha\in \RR\) and
  \item for every \(\tau\)-admissible set \(\theta' \subsetneq \theta\) there exists \(\alpha \in \theta \smallsetminus \theta'\) such that \(\lim_{n \to \infty} \langle\alpha, H_n\rangle = +\infty\).
\end{enumerate}
Furthermore, if \(H\) is the unique element of \( \aaa^\theta\) such that \(\langle \alpha, H\rangle = t_\alpha\) for all \(\alpha\in \Delta \smallsetminus \theta\), then \( \lim_{n\to \infty}\exp( H_n)\cdot x_{\emptyset} = \exp(H)\cdot x_{ \theta}\).

\subsection{A minimal Satake compactification of $X_{b}$} 
\label{sec:compactification_Xpq}

Recall that for real symmetric bilinear forms $b$, a compactification $\overX_b$ of the Riemannian symmetric space $X_b$ of $\OO(b)$ was constructed in Section~\ref{sec:comp-anos-locally}.
Similarly, for complex symmetric bilinear forms $b^{\CC}$, a compactification $\overX_{b^{\CC}}$ of the Riemannian symmetric space $X_{b^{\CC}}$ of $\OO(b^{\CC})$ was constructed in Section~\ref{sec:compl-orth-groups}.

\begin{proposition} \label{prop:overXb-min-Satake}
Let \(b\) be a nondegenerate symmetric bilinear form of signature \((p,q)\) on a real vector space~\(V\).
If $p>q$, then $\overX_{b}$ is a minimal Satake compactification of~$X_{b}$.
\end{proposition}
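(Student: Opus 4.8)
The plan is to identify the Satake compactification $\overX_b$ explicitly with the minimal Satake compactification associated with the standard representation of $\OO(b)$ on $V$ (or rather on the space of symmetric bilinear forms), and then verify this identification is a $G$-equivariant homeomorphism. Recall $X_b$ sits inside $\mathrm{Gr}_q(V)$ as the set of negative-definite $q$-planes, with closure $\overX_b = \{W \in \mathrm{Gr}_q(V) \mid b|_{W\times W} \leq 0\}$. First I would exhibit a natural $G$-equivariant embedding of $\overX_b$ into a projective space of Hermitian (here, symmetric) matrices: to a subspace $W$ one associates (the projective class of) the positive semidefinite form $q_W$ on $V^*$ obtained as follows — decompose $V = W \oplus W^{\perp_b}$-type data using $b$, and take the composition $V^* \to W^* \to W \hookrightarrow V$ built from $b|_W$ when $W$ is nondegenerate, extended to the boundary by degeneration. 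More cleanly: on the open part, $gK \mapsto \RR(g g^t)$ for the standard representation $\OO(b) \to \GL_\RR(V)$ (symmetric, not Hermitian, since everything is real), and one checks this map extends continuously to $\overX_b$ and is injective.

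**Next**, I would pin down the support $\theta_\tau$ of the relevant representation $\tau$. For the standard representation $\OO(b) \hookrightarrow \GL_\RR(V)$, the highest restricted weight is $\varepsilon_1$ (in the notation of Section~\ref{sec:orthogonal-groups}, using the root system of type $B_q$ since $p>q$), and from $(\chi_\tau, \alpha_i)$ one reads off that $\{\alpha \in \Delta \mid (\chi_\tau,\alpha) > 0\} = \{\alpha_1\}$. Hence the Satake compactification associated to $\tau$ has one-element support $\{\alpha_1\}$, so by the definition in Appendix~\ref{sec:satake-comp-1} it is a \emph{minimal} Satake compactification of $X_b$. The key structural input is then Proposition~\ref{prop:theta-comp-Anosov} (or rather its underlying computation): the $\{\alpha_1\}$-compatible irreducible piece is precisely the standard representation, with one-dimensional highest-weight space. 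Actually since $p > q$, the standard representation of $\OO(b)$ on $V$ is irreducible, so I can take $\tau$ to be (a projectivization of) this representation directly.

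**Then** comes the identification argument proper. I would use the orbit description in Section~\ref{sec:orbits-description}: the minimal Satake compactification with support $\{\alpha_1\}$ is a disjoint union of $G$-orbits indexed by $\tau$-admissible subsets $\theta \subset \Delta$, each orbit fibering over $\mathcal{F}_{\theta^{\ddagger}}$ with fiber the Riemannian symmetric space of $M_\theta$. On the other side, I would stratify $\overX_b$ by the rank of $\Ker(b|_{W\times W})$: by Lemma~\ref{lem:incid_iso_neg}, this kernel is a $b$-isotropic subspace $L$, and $W/L$ is a negative-definite subspace inside $L^{\perp_b}/L$, which carries a form of signature $(p-\dim L, q - \dim L)$. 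So the stratum of $W$'s with $\dim \Ker = k$ fibers over the variety $\mathcal{F}_k(b)$ of isotropic $k$-planes with fiber $X_{b'}$ where $b'$ has signature $(p-k,q-k)$ — and $\mathcal{F}_k(b) \simeq G/P_k(b)$, with $M_\theta$ for the appropriate $\theta$ being (up to compact and center) $\OO(b')$. Matching $\tau$-admissible sets with the integers $0 \le k \le q$ and checking the fibrations agree, $G$-equivariantly and topologically, gives the isomorphism. I would verify the topologies match by checking convergence along the model Weyl-chamber sequences $\exp(H_n) \cdot x_\emptyset$, using the criterion (i)–(ii) at the end of Section~\ref{sec:orbits-description} against the explicit Grassmannian limits in $\overX_b$.

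**The main obstacle** will be the last point: verifying that the two topologies genuinely coincide rather than merely having a continuous $G$-equivariant bijection of orbit-stratified spaces. Since $\overX_b$ is compact (closed in the compact $\mathrm{Gr}_q(V)$) and the Satake compactification is Hausdorff, a continuous bijection would automatically be a homeomorphism — so it reduces to checking \emph{continuity} of the natural map in one direction, i.e.\ that the embedding $\overX_b \hookrightarrow \PP(\mathrm{Sym}(V))$ via $W \mapsto [q_W]$ is well-defined and continuous across strata. This amounts to a degeneration computation: as a family of negative-definite $q$-planes $W_n$ degenerates so that $\Ker(b|_{W_n})$ acquires dimension $k$ in the limit, one must control the scaled symmetric bilinear forms $q_{W_n}$ and show their projective classes converge to $q_W$. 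I expect this to follow from a careful choice of bases adapted to the flag $L \subset L^{\perp_b}$, similar in spirit to the rescaling argument in the proof of Lemma~\ref{lem:propernessPk-seq}, but it is the step requiring genuine care rather than bookkeeping.
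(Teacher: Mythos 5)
There is a genuine error in your choice of representation, and it is not merely presentational. The compactification $\overX_{b}\subset\mathrm{Gr}_q(V)$ has as its unique closed $\OO(b)$-orbit the set of $W$ with $b|_{W\times W}\equiv 0$, i.e.\ the Grassmannian $\mathcal{F}_q(b)$ of \emph{maximal} isotropic subspaces, which is $G/P_{\{\alpha_q\}}$. A Satake compactification with support $\{\alpha_1\}$ --- which is what the standard representation $\OO(b)\to\GL_\RR(V)$ produces, since its highest restricted weight is $\varepsilon_1$ --- has closed orbit $G/P_{\{\alpha_1\}}=\mathcal{F}_1(b)$, the space of isotropic \emph{lines}. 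For $q\geq 2$ these are different flag varieties, so $\overX_b$ cannot have support $\{\alpha_1\}$; in fact your own stratification in the third paragraph (deepest stratum fibering over $\mathcal{F}_q(b)$ with point fibers) already contradicts the support you computed. Relatedly, your two descriptions of the embedding do not agree with each other: the tensor $q_W$ built from $(b|_{W})^{-1}$ has rank $q$, whereas $gg^t$ has full rank, and the closure of $W\mapsto[q_W]$ in $\PP(\mathrm{Sym}^2V)$ collapses the boundary strata (the limit remembers only $\Ker(b|_{W\times W})$, not $W$), so it is not an embedding of $\overX_b$ at all. The correct representation is $\tau:\OO(b)\to\GL_\RR\bigl(\bigwedge^q V\bigr)$: the Pl\"ucker embedding realizes $\mathrm{Gr}_q(V)$ inside $\PP(\bigwedge^q V)$ and identifies $\overX_b$ with the closure of the image of $X_b$ there; the highest restricted weight is $\varepsilon_1+\cdots+\varepsilon_q$, whose support in type $B_q$ (recall $p>q$) is exactly $\{\alpha_q\}$ --- still a singleton, hence still a \emph{minimal} Satake compactification, but a different one from the one you targeted.

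Once the representation is corrected, the topological matching you rightly flag as the main obstacle can be short-circuited: since $\bigwedge^q V$ is an irreducible $\OO(b)$-module when $p>q$, the theorem of Kor\'anyi \cite{Koranyi_CartanHelgason} applies directly and identifies the closure of the orbit in $\PP(\bigwedge^q V)$ with the Satake compactification of support $\{\alpha\in\Delta\mid(\chi_\tau,\alpha)\neq 0\}$; this is the route the paper takes, and it removes the need for the by-hand degeneration analysis. If you insist on a self-contained argument, your plan of checking the orbit stratification and the convergence criterion for $\exp(H_n)\cdot x_\emptyset$ from Section~\ref{sec:orbits-description}, combined with the compact-to-Hausdorff observation to get one-sided continuity for free, is viable --- but it must be carried out for $\bigwedge^q V$, where the Pl\"ucker coordinates of $\exp(H)\cdot W_0$ degenerate along the weights $\sum_{i\in S}\varepsilon_i-\sum_{i\notin S}\varepsilon_i$, not for the standard representation.
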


When \(p=q\), \(\overX_b\) is not a Satake compactification, see Example~\ref{exam:Xb_not_Sata}.

\begin{proof}
The natural representation $\tau : \OO(b)\to\GL_{\RR}(\bigwedge^q V)$ identifies $X_{b}$ with a subset of $\PP(\bigwedge^q V)$, and $\overX_{b}$ with the closure of this set in $\PP(\bigwedge^q V)$.
Since \(\bigwedge^q V\) is an irreducible representation of \(\OO(b)\), the result of \cite{Koranyi_CartanHelgason} applies to show that \(\overX_{b}\) is a Satake compactification of~$X_{b}$ and that its support is \(\{ \alpha \in \Delta \mid (\chi_\tau, \alpha ) \neq 0\}\) if \(\chi_\tau\) is the highest weight of \(\tau\).
This support is thus \(\{ \alpha_q\}\) and the compactification is a minimal Satake compactification.
\end{proof}

\begin{proposition}
  Let \(b^\CC\) be a nondegenerate symmetric complex bilinear form on a complex vector space \(V\) of dimension~\(n\).
  If \(n\) is odd, then \(\overX_{b^\CC}\) is a minimal Satake compactification of~$X_{b^\CC}$.
\end{proposition}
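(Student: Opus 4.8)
The plan is to mimic the proof of Proposition~\ref{prop:overXb-min-Satake}, replacing the exterior power representation of $\OO(b)$ with the appropriate half-spin-like representation for the complex orthogonal group, viewed as a real group. First I would recall that $\OO(b^\CC)$, regarded as a real Lie group, has a Riemannian symmetric space $X_{b^\CC}$ which, following Section~\ref{sec:compl-orth-groups}, is realized inside $\mathrm{Gr}^\RR_n(V)$, and $\overX_{b^\CC}$ is the closure of this set there. To identify $\overX_{b^\CC}$ with a Satake compactification I need to exhibit an irreducible real projective representation $\tau$ of $\OO(b^\CC)$ together with a $\tau$-equivariant embedding of $X_{b^\CC}$ into $\PP(\mathcal{H}_m)$ whose closure is $\overX_{b^\CC}$, and then compute the support $\theta_\tau$ of its highest weight.

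The natural candidate is the representation on $\bigwedge^\bullet_\CC V$ or, more precisely, a real irreducible constituent built from the "middle" exterior powers. When $n$ is odd, say $n = 2r+1$, the relevant parabolic is $P_1(b^\CC)$, and the maximal isotropic subspaces have complex dimension~$r$; the compactification $\overX_{b^\CC}$ should correspond to a fundamental representation supported on a single simple restricted root. The key step is to identify the restricted root system of $\OO(b^\CC)$ as a real group: for $n = 2r+1$ odd it is of type $B_r$ (or $C_r$, depending on conventions — the restricted roots of $\SO(n,\CC)$ over $\RR$ form a reduced system), with all root multiplicities equal, and the compactification coming from the embedding $X_{b^\CC}\hookrightarrow \overX_b$ (with $b = \mathrm{Re}(b^\CC)$) restricts the minimal Satake compactification $\overX_b$ of $\OO(b)$ constructed in Proposition~\ref{prop:overXb-min-Satake}. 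Concretely, I would argue that the representation realizing $\overX_b$ as a minimal Satake compactification, restricted to $\OO(b^\CC)\subset\OO(b)$, decomposes into irreducibles, one of which has highest weight with support a single simple restricted root of $\OO(b^\CC)$; the corresponding embedding has closure exactly $\overX_{b^\CC}$, because $\overX_{b^\CC}$ is by definition the closure of $X_{b^\CC}$ in $\overX_b$ and the Satake embedding for $\OO(b^\CC)$ factors compatibly.

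Then I would apply the criterion of K\'or\'anyi and of \cite{Koranyi_CartanHelgason} exactly as in the proof of Proposition~\ref{prop:overXb-min-Satake}: once $\overX_{b^\CC}$ is exhibited as the closure of the image of $X_{b^\CC}$ under an embedding associated with an irreducible representation $\tau$, that result gives that it is a Satake compactification with support $\{\alpha\in\Delta \mid (\chi_\tau,\alpha)\neq 0\}$, and one checks from the highest-weight data that this support is a single simple root, whence the compactification is minimal. I would also need to double-check that the hypothesis $n\geq 3$ (equivalently $n$ odd and $n\geq 3$, i.e.\ $r\geq 1$) ensures $\OO(b^\CC)$ is noncompact with nonempty $\Delta$, so the statement is not vacuous.

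The main obstacle I anticipate is the bookkeeping for the restricted root system of the \emph{complex} orthogonal group viewed as a real group and the precise identification of which fundamental representation yields the embedding $X_{b^\CC}\hookrightarrow\overX_b$ — in particular verifying that its restriction to $\OO(b^\CC)$ remains irreducible and has highest weight supported on exactly one simple restricted root (rather than splitting or acquiring extra support). Once that root-theoretic computation is in place, everything else is a direct invocation of \cite{Koranyi_CartanHelgason} and the construction in Section~\ref{sec:compl-orth-groups}, paralleling the real case word for word.
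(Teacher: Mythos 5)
Your strategy (exhibit an irreducible representation $\tau$ of $\OO(b^\CC)$ and invoke \cite{Koranyi_CartanHelgason} as in Proposition~\ref{prop:overXb-min-Satake}) is not the route the paper takes, and as written it has a genuine gap exactly at the point you yourself flag as ``the main obstacle''. The step that carries all the content is the assertion that the restriction to $\OO(b^\CC)$ of the representation $\bigwedge^{q}V$ realizing $\overX_b$ (with $b=\mathrm{Re}(b^\CC)$) has an irreducible constituent whose associated equivariant embedding of $X_{b^\CC}$ into $\PP(\mathcal{H}_m)$ has closure \emph{equal} to $\overX_{b^\CC}$. This is precisely what needs proof: $\overX_{b^\CC}$ is by definition the closure of the image under the \emph{full} (reducible) representation, i.e.\ a generalized Satake compactification in the sense of Definition~\ref{defi:gen-Satake}, and the whole subtlety of that notion (see the Remark in Section~\ref{sec:generalizedSatake} on closures in $\PP(\mathcal{H}_{\sum n_i})$ versus products) is that projecting to an irreducible summand can change the closure. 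A litmus test: nothing in your outline uses the oddness of $n$ in an essential way, yet the statement is \emph{false} for $n$ even (the paper's remark after the proposition; $\overX_{b^\CC}$ then has two closed orbits, as in Example~\ref{exam:Xb_not_Sata}). Any correct argument must isolate where parity enters, and yours does not. You also conflate the parabolic $P_1(b^\CC)$ (stabilizer of an isotropic line, tied to $\alpha_1$ and to the Anosov condition) with the support of the compactification, which turns out to be the \emph{last} simple root $\alpha_m$ of the type-$B_m$ restricted root system of $\OO(2m+1,\CC)$.

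For comparison, the paper does not produce an irreducible representation at all. It verifies directly that $\overX_{b^\CC}$ satisfies the axiomatic description of the Satake compactification with support $\{\alpha_m\}$ given in Section~\ref{sec:orbits-description}: it lists the $\tau$-admissible sets $\theta_i$, shows via Lemma~\ref{lem:nullW-C-space} and Witt's theorem that the $\OO(b^\CC)$-orbits in $\overX_{b^\CC}$ are the loci $\mathcal{U}_i$ where $\dim_\CC\Ker(b^\CC|_{W\times W})=i$, writes down explicit representatives $W_{\theta_i}$ whose stabilizers are the prescribed groups $U_{\theta_i^\ddagger}A_{\theta_i^\ddagger}M_{\theta_i^\vee}(K\cap M_{\theta_i})$, and checks that convergence of $\exp(H_k)\cdot W_{\theta_0}$ matches the combinatorial description of the closure of the Weyl chamber. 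Oddness of $n$ enters through the extra vector $\sqrt{-1}\,e_{m+1}$, which is what forces a single closed orbit. If you want to salvage your approach you would need to actually name the irreducible constituent (something like $\bigwedge^m_\CC V\otimes\overline{\bigwedge^m_\CC V}$, whose restricted highest weight $2(\varepsilon_1+\cdots+\varepsilon_m)$ is indeed supported on $\{\alpha_m\}$ alone) and then prove the closure identification by hand --- which would amount to redoing the orbit analysis anyway.
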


\begin{proof}
  Write \(n=2m+1\).
  There is a basis \((e_1, \dots, e_n)\) of \(V\) such that \(b^\CC(x,y) = \sum_{i=1}^{n} x_i y_{n-i+1}\) for every \(x = \sum_{i=1}^{n} x_i e_i\) and  \(y = \sum_{i=1}^{n} y_i e_i\) of \(V\).
One can take \(K = \U(n)\) and \(\aaa\) the diagonal matrices in that basis.
Namely
\begin{equation*}
  \aaa = \{ \mathrm{diag}( \lambda_1, \dots, \lambda_m, 0 , -\lambda_m, \dots, -\lambda_1) \mid \lambda_1, \dots, \lambda_m \in \RR\}.
\end{equation*}
For \(i=1, \dots, m\), set \(\langle \varepsilon_i, \mathrm{diag}( \lambda_1, \dots, \lambda_m, 0 , -\lambda_m, \dots, -\lambda_1) \rangle = \lambda_i\).
The restricted root system is
\[ \Sigma = \{\pm \varepsilon_i \pm \varepsilon_j \mid 1\leq i < j \leq m\} \cup \{ \pm \varepsilon_i \mid 1\leq i \leq m\}. \]
A system of simple restricted roots is \(\Delta =\{ \alpha_1, \dots, \alpha_m\}\) where \(\alpha_i = \varepsilon_i - \varepsilon_{i+1}\) for \(1\leq i\leq m-1\) and \(\alpha_m = \varepsilon_m\). 
We will show that \(\overX_{b^\CC}\) satisfies the above axiomatic description of Satake compactification whose support is \( \{\alpha_m\}\).
The admissible sets are \( \theta_0 = \emptyset\), \(\theta_1= \{ \alpha_1\}\), \dots{}, \(\theta_m =\Delta\). One checks that \(\theta_{0}^{\vee} = \Delta\), \(\theta_{0}^{\ddagger} = \emptyset\) and \(\theta_{i}^{\vee} = \{ \alpha_i, \dots, \alpha_m\}\), \(\theta_{i}^{\ddagger} = \{\alpha_i\}\) for \(1\leq i \leq m\).

  We first consider the orbits description of \(\overX_{b^\CC}\). By Lemma~\ref{lem:nullW-C-space}, for any \(W\in \overX_{n,\CC}\), the set \(\Ker(b^{\CC}|_{W\times W})\) is a \(\CC\)-vector subspace of \(V\). Applying Witt's theorem, one shows that the sets
  \[ \mathcal{U}_i = \{ W \in \overX_{b^\CC} \mid \dim_\CC \Ker(b^{\CC}|_{W\times W})=i\}, \]
  \(i=0, \dots,m\), are the \(\OO(b^\CC)\)-orbits in \(\overX_{b^\CC}\).
For \(0\leq i \leq m\), the real vector space
  \begin{multline*}
    W_{\theta_i} = \mathrm{Span}_\RR( e_1 , \sqrt{-1} e_1, \dots , e_i, \sqrt{-1} e_i, e_{i+1} - e_{n-i}, \sqrt{-1} (e_{i+1}+ e_{n-i}),\\ \dots, e_{m} - e_{m+2}, \sqrt{-1} (e_{m}+ e_{m+2}), \sqrt{-1}e_{m+1})
  \end{multline*}
  belongs to \(\mathcal{U}_i\) and its stabilizer in \(\OO(b^\CC)\) is precisely the group \(U_{\theta_{i}^{\ddagger}} A_{\theta_{i}^{\ddagger}} M_{\theta_{i}^\vee} (K \cap M_{\theta_{i}})\) described above and contained in the parabolic subgroup \(P_{\theta_{i}^{\ddagger}}\).

For an element \(H\) in \(\aaa\) one has
  \begin{multline*}
    \exp(H) \cdot W_{\theta_0} = \mathrm{Span}_\RR(  e_{1} - e^{-2\langle \varepsilon_1, H \rangle}e_{n}, \sqrt{-1} (e_{1}+ e^{-2\langle \varepsilon_1, H \rangle} e_{n}),\\ \dots, e_{m} - e^{-2\langle \varepsilon_m, H \rangle} e_{m+2}, \sqrt{-1} (e_{m}+ e^{-2\langle \varepsilon_m, H \rangle} e_{m+2}), \sqrt{-1}e_{m+1}).
  \end{multline*}
From this for a sequence \((H_k)_{k\in\NN}\) in \((\overline{\aaa}^+)^\NN\), the sequence \((\exp(H_k)\cdot W_{\theta_0})_{k\in\NN}\) converges if and only if there exists \(0\leq i \leq m\) for which \((\langle \varepsilon_j, H_k\rangle)_{k\in \NN}\) goes to infinity for \(j\leq i\) and has a limit in \(\RR\) for \(j>i\). This is equivalent to \((\langle \alpha_j, H_k\rangle)_{k\in \NN}\) goes to infinity for \(j\leq i\) and has a limit in \(\RR\) for \(j>i\). It follows that the closure of the Weyl chamber satisfies the above description.
\end{proof}

\begin{remark}
  For even~\(n\), the same analysis can be used to show that \(\overX_{b^\CC}\) is \emph{not} a Satake compactification.
\end{remark}

\subsection{Generalized Satake compactifications}
\label{sec:generalizedSatake}

The classical notion of Satake compactification does not behave well with respect to totally geodesic subspaces: 
If $\overX_\tau$ is a Satake compactification of $X$ and if $Y\subset X$ is a totally geodesic subsymmetric space, then the closure of $Y$ in $\overX_\theta$ is not always a Satake compactification of~$Y$. 

\begin{example}\label{exam:Xb_not_Sata}
Consider a real vector space \(V'\) equipped with a symmetric bilinear form \(b'\) of signature \((p, p+1)\) and let \(V\subset V'\) be a subspace such that \(b = b'|_{V\times V}\) is of signature \((p,p)\).
Then the closure of $X_{b}$ inside $\overX_{b'}$ is not a Satake compactification. Indeed this closure is \(\overX_b\) which contains
 two closed $\OO(b)_0$-orbits, contradicting the above description of Section~\ref{sec:orbits-description}.
\end{example}

In order to obtain a class of compactifications which have this functoriality properties we will have to consider a small generalization of Satake compactifications, which we call \emph{generalized Satake compactifications}.
The only difference is that we allow the representation $\tau$ to be a sum of irreducible representations.

\begin{remark}
Compare with \cite[\S\,5.3]{Satake_annals}, where Satake considers reducible representations, but then takes the closure in $\PP(\mathcal{H}_{n_1} ) \times \cdots \times \PP(\mathcal{H}_{n_k} )$ instead of $\PP(\mathcal{H}_{\sum_{i=1}^{k} n_i})$. 
\end{remark}

\begin{definition} \label{defi:gen-Satake}
Let $G$ be a semisimple Lie group and $\tau: G \to \SL(n,\CC)$ a faithful projective representation with \(\tau(K)\subset \PSU(n)\).
The \emph{generalized Satake compactification} of $X=G/K$ associated with~$\tau$ is the closure of the image of $X$ under the map \(X \rightarrow \PP(\mathcal{H}_n)\) given by $gK \mapsto \RR({\tau}(g) {\tau}(g)^*)$. 
\end{definition}

\begin{lemma} \label{lem:genSatake-subsymm}
Let $X=L/K$ be a Riemannian symmetric space, $\overX$ a generalized Satake compactification of~$X$, and $Y=H/(K\cap H)\hookrightarrow X$ a totally geodesic subsymmetric space of~$X$.
Then the closure of $Y$ in $\overX$ is a generalized Satake compactification of~$Y$.
\end{lemma}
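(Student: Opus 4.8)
The plan is to unwind the definition of a generalized Satake compactification and exhibit the closure of $Y$ in $\overX$ as the generalized Satake compactification of $Y$ associated with the restricted representation. Write $\overX = \overX_\tau$ for a faithful projective representation $\tau : L \to \PSL(n,\CC)$ with $\tau(K) \subset \PSU(n)$, so that $\overX$ is the closure in $\PP(\mathcal{H}_n)$ of the image of $X = L/K$ under $\iota_\tau : gK \mapsto \RR(\tau(g)\tau(g)^\ast)$. Let $Y = H/(K\cap H) \hookrightarrow X$ be a totally geodesic subsymmetric space; this means (after conjugating) that $H$ is a reductive subgroup of $L$ with $H \cap K$ a maximal compact subgroup of $H$, and that the inclusion $Y \hookrightarrow X$ is realized as $h(K\cap H) \mapsto hK$ for $h \in H$.

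The first step is to set $\sigma = \tau|_H : H \to \PSL(n,\CC)$ and observe that, since $\tau(K) \subset \PSU(n)$, we have $\sigma(K\cap H) \subset \PSU(n)$. One must also check that $\sigma$ is faithful as a projective representation: this follows because $\tau$ is faithful and $H$ is a subgroup. Therefore $\sigma$ satisfies the hypotheses of Definition~\ref{defi:gen-Satake} (with $\tau$ reducible in general, which is exactly what the ``generalized'' allows), and the generalized Satake compactification $\overY_\sigma$ of $Y$ is by definition the closure in $\PP(\mathcal{H}_n)$ of the image of $Y$ under $\iota_\sigma : h(K\cap H) \mapsto \RR(\sigma(h)\sigma(h)^\ast) = \RR(\tau(h)\tau(h)^\ast)$. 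The key observation is then that the embedding $\iota_\sigma$ is precisely the composition of the totally geodesic inclusion $Y \hookrightarrow X$ with $\iota_\tau$: both send $h(K\cap H)$ to $\RR(\tau(h)\tau(h)^\ast)$. Consequently $\iota_\sigma(Y) = \iota_\tau(Y) \subset \iota_\tau(X)$, and the closure of $Y$ in $\overX = \overline{\iota_\tau(X)}$ coincides with the closure of $\iota_\sigma(Y)$ in $\PP(\mathcal{H}_n)$, which is by definition $\overY_\sigma$. (Here one uses that the closure of $\iota_\tau(X)$ is taken in $\PP(\mathcal{H}_n)$, so closures of subsets of $\iota_\tau(X)$ computed inside $\overX$ agree with those computed in $\PP(\mathcal{H}_n)$.)

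The main subtlety, and the step I would be most careful about, is justifying that a totally geodesic subsymmetric space $Y \subset X$ really does arise from a reductive subgroup $H \leq L$ with $H\cap K$ maximal compact in $H$, in such a way that the inclusion is the orbit map $h(K\cap H)\mapsto hK$; this is the standard correspondence between totally geodesic submanifolds through the basepoint and Lie triple systems / their associated subgroups, and one should either cite it or note that it is part of the meaning of ``totally geodesic subsymmetric space'' in this paper. Once that identification is in hand, everything else is the formal manipulation above: the restricted projective representation is faithful with compact part in $\PSU(n)$, its Satake embedding is the restriction of the ambient one, and hence the closure of the orbit is by definition a generalized Satake compactification of $Y$. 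No genuinely hard estimate is involved — the content is entirely that Definition~\ref{defi:gen-Satake} is stable under passing to a reductive subgroup, which is the whole point of allowing $\tau$ to be reducible.
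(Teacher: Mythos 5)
Your proposal is correct and follows essentially the same route as the paper: the paper's proof simply takes the representation $\tau$ defining $\overX$, composes it with the homomorphism $H\to L$ coming from the inclusion $Y\hookrightarrow X$, and observes that the closure of $Y$ is by definition the generalized Satake compactification associated with this restricted representation. Your version just fills in the routine verifications (faithfulness of the restriction, $\sigma(K\cap H)\subset\PSU(n)$, compatibility of the two Satake embeddings) that the paper leaves implicit.
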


\begin{proof}[Proof of Lemma~\ref{lem:genSatake-subsymm}]
  Let $\tau: L \to \SL(n,\CC)$ be a representation with finite kernel such that \(\overX = \overX_\tau\) and let $\phi: H \to L$ be the Lie group homomorphism associated with the embedding \(Y \hookrightarrow X\).
Then the closure of $Y$  is the generalized Satake compactification associated with $ \tau \circ \phi: H \to \SL(n,\CC)$.
\end{proof}

From this we deduce the following.

\begin{lemma}\label{lem:gener-satake-comp-product}
  Let \(X=G/K\) be a Riemannian symmetric space, \(\tau_1 : G \to \SL(n_1, \CC)\) a representation with \(\tau_1 (K)\subset \U(n_1)\) and \(\tau_2: G \to \SL(n_2, \CC)\) a representation with finite kernel and with \(\tau_2(K)\subset \U(n_2)\).
Let \(\overX_i\), \(i=1,2\) be the closure of \(\tau_i(G)/ \tau_i(K)\) in \(\PP( \mathcal{H}_{n_i})\) so that \(\overX_2\) is a generalized Satake compactification. 
Let 
\begin{align*}
\psi: X & \longrightarrow \overX_1 \times \overX_2 \\
 g\cdot K & \longmapsto (\RR ( \tau_1(g) \tau_{1}^{*}(g)), \RR ( \tau_2(g) \tau_{2}^{*}(g))).
\end{align*}

Then the closure of \(\psi(X)\) is a generalized Satake compactification.
\end{lemma}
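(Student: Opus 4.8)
The plan is to realize the closure of $\psi(X)$ as the generalized Satake compactification attached to the direct sum $\tau:=\tau_1\oplus\tau_2\colon G\to\GL(n_1+n_2,\CC)$ (normalized projectively so as to land in $\SL(n_1+n_2,\CC)$). In other words, I want to prove that the closure of $\psi(X)$ inside $\overline{X}_1\times\overline{X}_2$ is $G$-equivariantly homeomorphic, rel $X$, to the compactification $\overline{X}_\tau$ of Definition~\ref{defi:gen-Satake}; this is exactly the reconciliation of the "product" construction used by Satake for reducible representations with the "single $\PP(\mathcal{H}_n)$" construction adopted in Definition~\ref{defi:gen-Satake} (see the Remark preceding the statement).

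First I would check that $\tau$ is an admissible datum for Definition~\ref{defi:gen-Satake}: since $\tau_2$ has finite kernel, so does $\tau$; and since $\tau_i(K)\subset\U(n_i)$, we get $\tau(K)\subset\U(n_1)\times\U(n_2)\subset\U(n_1+n_2)$, hence $\tau(K)\subset\PSU(n_1+n_2)$. So $\overline{X}_\tau$, the closure in $\PP(\mathcal{H}_{n_1+n_2})$ of the image of $X$ under $F\colon gK\mapsto\RR(\tau(g)\tau(g)^*)$, is a generalized Satake compactification. The key elementary observation is that $\tau(g)\tau(g)^*=\bigl(\tau_1(g)\tau_1(g)^*\bigr)\oplus\bigl(\tau_2(g)\tau_2(g)^*\bigr)$ is block diagonal with both blocks positive definite; hence $F(gK)$ and the pair $\psi(gK)=\bigl(\psi_1(gK),\psi_2(gK)\bigr)$ determine one another, and $F$, $\psi$ are both proper embeddings of $X$ with $[A_1\oplus A_2]\mapsto([A_1],[A_2])$ inducing a $G$-equivariant homeomorphism $F(X)\to\psi(X)$. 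It then suffices to show that this homeomorphism extends to a continuous bijection $\overline{X}_\tau\to\overline{\psi(X)}$, which (compact to Hausdorff) is automatically a homeomorphism and identifies $\overline{\psi(X)}$ with a generalized Satake compactification.

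For the extension I would run the standard convergence analysis. By the Cartan decomposition together with compactness of $K$ and of $[0,+\infty]^{\Delta}$, any sequence of points of $X$ leaving every compact set admits a subsequence $g_nK$ with $g_n=k_n\exp(H_n)\ell_n$, $k_n\to k_\infty\in K$, and $\langle\alpha,H_n\rangle\to t_\alpha\in[0,+\infty]$ for every $\alpha\in\Delta$. Along such a subsequence, since $\tau_i(\ell_n)$ is unitary and $\tau_i(\exp H_n)$ is diagonal with entries $e^{\langle\lambda,H_n\rangle}$ over the weights $\lambda$ of $\tau_i$, each of $\psi_1(g_nK)$, $\psi_2(g_nK)$, $F(g_nK)$ converges, with limit described through $k_\infty$ and the family $(t_\alpha)$ exactly as in the orbit/convergence description of Section~\ref{sec:orbits-description}. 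The content to verify is that, along such subsequences, the limit of $F(g_nK)$ and the pair of limits of $\psi_1(g_nK),\psi_2(g_nK)$ determine one another: one direction is immediate, by reassembling the two blocks and comparing the growth rates $\langle\chi_{\tau_1},H_n\rangle$, $\langle\chi_{\tau_2},H_n\rangle$ (whose difference is a fixed integer combination of the $t_\alpha$); for the other direction, the limiting point $\bigl[\tau(k_\infty)\,Q\,\tau(k_\infty)^*\bigr]$ (with $Q=Q_1\oplus Q_2$ the limiting diagonal PSD matrix) remembers the coset of $k_\infty$ modulo the relevant parabolic together with the finite data $(t_\alpha)_{t_\alpha<\infty}$, and that is precisely what is needed to recover each block $\bigl[\tau_i(k_\infty)\,Q_i\,\tau_i(k_\infty)^*\bigr]$ up to scaling.

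The step I expect to be the main obstacle is this last point, specifically the case in which one of the two blocks of $\lim F(g_nK)$ degenerates to zero because one of $\tau_1,\tau_2$ grows strictly faster than the other; then the naive block-projection is undefined, and one must still extract the limit of the corresponding $\psi_i$ from the surviving block. I would handle this exactly by the bookkeeping of weights and $\tau$-connectedness underlying Section~\ref{sec:orbits-description} (the support of a highest weight controlling which coset of $k_\infty$ is remembered), which is what makes generalized Satake compactifications depend only on the combinatorial position of the weights. Granting that identification of limits, the homeomorphism $F(X)\to\psi(X)$ extends continuously to $\overline{X}_\tau\to\overline{\psi(X)}$, completing the proof that the closure of $\psi(X)$ is a generalized Satake compactification.
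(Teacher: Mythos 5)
Your overall strategy---realizing $\overline{\psi(X)}$ as a generalized Satake compactification $\overline{X}_\tau$ in the sense of Definition~\ref{defi:gen-Satake} for a single representation $\tau$ built out of $\tau_1$ and $\tau_2$---is the right one, but the choice $\tau=\tau_1\oplus\tau_2$ is wrong, and the degenerate case you flag as ``the main obstacle'' is not something that weight bookkeeping can repair: it is a genuine failure of the claimed identification. Concretely, take $G=\SL(3,\RR)$, $\tau_1$ the standard representation, $\tau_2$ its dual, and $H_n\in\overline{\aaa}^+$ with $\langle\alpha_1,H_n\rangle\to+\infty$ and $\langle\alpha_2,H_n\rangle\to c\in\RR$. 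Writing $H_n=\mathrm{diag}(t_1,t_2,t_3)$, the matrix $\tau(\exp H_n)\tau(\exp H_n)^*$ has diagonal $(e^{2t_1},e^{2t_2},e^{2t_3},e^{-2t_1},e^{-2t_2},e^{-2t_3})$, and after dividing by the dominant entry $e^{2t_1}$ every other entry tends to $0$; thus $F(\exp(H_n)K)$ converges in $\PP(\mathcal{H}_6)$ to $[\mathrm{diag}(1,0,0,0,0,0)]$ \emph{independently of $c$}, whereas $\psi_2(\exp(H_n)K)$ converges to $[\mathrm{diag}(0,e^{-2c},1)]$, which depends on $c$. So two sequences with the same limit in $\overline{X}_{\tau_1\oplus\tau_2}$ have distinct limits in $\overline{\psi(X)}$, and no continuous map $\overline{X}_{\tau_1\oplus\tau_2}\to\overline{\psi(X)}$ extending the identity on $X$ can exist: the finite datum $c$ retained by the product is recorded nowhere in $\lim F(g_nK)$, so it cannot be extracted from the surviving block by any combinatorics. (The map in the opposite direction does exist and is continuous, but it is not injective, and being dominated by $\overline{\psi(X)}$ does not make $\overline{\psi(X)}$ a generalized Satake compactification.)

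The repair is to take the tensor product $\tau=\tau_1\otimes\tau_2:G\to\SL(n_1n_2,\CC)$ instead of the direct sum. Then $\tau(g)\tau(g)^*=\bigl(\tau_1(g)\tau_1(g)^*\bigr)\otimes\bigl(\tau_2(g)\tau_2(g)^*\bigr)$, and the point is that $[A_1\otimes A_2]\mapsto([A_1],[A_2])$ is a well-defined $G$-equivariant homeomorphism on projective classes of nonzero positive semidefinite matrices of this form: no degeneration can occur because $A_1\otimes A_2=0$ forces some $A_i=0$, and $[A_1\otimes A_2]$ determines each $[A_i]$ (read the entries $(A_1)_{ij}(A_2)_{k_0l_0}$ against a fixed nonzero entry of $A_2$, and symmetrically). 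This identifies $\overline{\psi(X)}$ with $\overline{X}_{\tau_1\otimes\tau_2}$ with no convergence analysis at all, the finite-kernel and unitarity hypotheses passing to $\tau_1\otimes\tau_2$. This is in effect what the paper does: it applies Lemma~\ref{lem:genSatake-subsymm} with $L=\SL(n_1,\CC)\times\SL(n_2,\CC)$ and $H=(\tau_1,\tau_2)(G)$, the product compactification of the product symmetric space being the Satake compactification attached to the external tensor product of the standard representations (compare property (3) in Section~\ref{sec:satake-comp-1}).
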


\begin{proof}[Proof of Lemma~\ref{lem:gener-satake-comp-product}]
  Apply Lemma~\ref{lem:genSatake-subsymm} with \(L= \SL(n_1, \CC) \times \SL(n_2, \CC)\) and \(H = (\tau_1, \tau_2)(G)\).
\end{proof}

We say that a compactification \(\overX_1\) \emph{dominates} a compactification \(\overX_2\) if there is a continuous \(G\)-equivariant map \(\overX_1 \to \overX_2\), such a map is necessarly surjective and proper.

\begin{lemma}
  \label{lem:gal-sata-corners}
  Let \(\overX\) be a generalized Satake compactification dominating the maximal Satake compactification, then \(\overX\) is a manifold with corners.
\end{lemma}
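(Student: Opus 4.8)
The plan is to deduce the corner structure of a dominating generalized Satake compactification $\overX$ from the known fact that the maximal Satake compactification $\overX^{\max}$ is a manifold with corners \cite[Prop.\,I.19.27]{Borel_Ji}, by analysing the domination map $\pi\colon\overX\to\overX^{\max}$ fiberwise over the (finitely many) $G$-orbit strata. First I would recall from Section~\ref{sec:orbits-description} that $\overX$, being a generalized Satake compactification, is a disjoint union of finitely many $G$-orbits, each orbit fibering over a flag variety $\mathcal{F}_{\theta^{\ddagger}}$ with fiber the Riemannian symmetric space of a reductive group $M_\theta$; the same holds for $\overX^{\max}$, with the open orbit $X=G/K$ being dense in both and $\pi|_X=\mathrm{id}$. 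Since $\pi$ is continuous, $G$-equivariant, proper and surjective, it is determined by where it sends the orbit strata, and $G$-equivariance reduces everything to understanding $\pi$ in a neighbourhood of a chosen base point $x_\theta$ in each stratum, where by the topology description at the end of Section~\ref{sec:orbits-description} convergence $\exp(H_n)\cdot x_\emptyset\to\exp(H)\cdot x_\theta$ is governed by which coordinates $\langle\alpha,H_n\rangle$ go to $+\infty$.

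The heart of the argument is a local model. Near any point $p\in\overX$ lying in the stratum indexed by a $\tau$-admissible set $\theta$, I would produce local coordinates in which $\overX$ looks like $\RR^k_{\geq 0}\times\RR^{m-k}$: roughly, the $k$ coordinates $\langle\alpha_i,H\rangle^{-1}$ (or $e^{-\langle\alpha_i,H\rangle}$) for $\alpha_i$ in the "nucleus directions" that degenerate at $p$, together with the internal coordinates on the boundary symmetric space $M_\theta/(K\cap M_\theta)$ and on the flag variety $\mathcal{F}_{\theta^{\ddagger}}$, which are themselves manifolds. The domination map $\pi$ then, in these coordinates, collapses exactly the directions corresponding to simple roots that lie in the support of $\overX^{\max}$ (i.e.\ all of $\Delta$) but impose no further degeneration constraint on $\overX^{\max}$ at $\pi(p)$ — concretely $\pi$ either is a local diffeomorphism or forgets some of the $M_\theta$-fiber directions. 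Since $\overX^{\max}$ already has the corner chart $\RR^m_{\geq 0}\times(\text{manifold})$ at $\pi(p)$, and the extra directions of $\overX$ over $\overX^{\max}$ are smooth manifold directions (fibers of symmetric spaces, which have no boundary), pulling back gives a corner chart for $\overX$ at $p$. Thus every point of $\overX$ has a neighbourhood homeomorphic to $\RR^k_{\geq 0}\times\RR^{m-k}\times(\text{open ball})$ for some $k$, which is the definition of a manifold with corners.

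The main obstacle I expect is making the "local model near $x_\theta$" precise and checking that the domination map really does behave as a coordinate projection rather than something more degenerate — in particular, ruling out that $\pi$ folds boundary strata onto each other in a way that would create non-manifold points (e.g.\ identifying two boundary faces). Here I would lean on the fact that $\pi$ is $G$-equivariant and restricts to a bijection on the open orbit, so on the level of strata $\pi$ induces a map on the indexing $\tau$-admissible sets that is order-preserving for the closure relation; combined with the explicit Weyl-chamber convergence criterion of Section~\ref{sec:orbits-description}, this forces $\pi$ to be, locally, the projection $\RR^m_{\geq 0}\times N_1\to\RR^m_{\geq 0}\times N_2$ (or an inclusion of such, in the root-coordinate factor) for manifolds $N_1\twoheadrightarrow N_2$, which is exactly what is needed. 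Once this local picture is in hand, the corner structure transfers immediately and the lemma follows.
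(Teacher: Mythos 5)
Your overall strategy---exploit the domination map $\psi\colon\overX\to\overX^{max}$ together with the known corner structure of $\overX^{max}$---is the right starting point, but two steps of your plan do not go through as stated. First, the orbit description of Section~\ref{sec:orbits-description} (strata fibering over $\mathcal{F}_{\theta^{\ddagger}}$ with symmetric-space fibers, a unique closed orbit, the Weyl-chamber convergence criterion) is established only for \emph{irreducible} $\tau$, i.e.\ for genuine Satake compactifications; for generalized ones it can fail outright---Example~\ref{exam:Xb_not_Sata} exhibits a generalized Satake compactification with two closed $G$-orbits. So the stratified local model from which you want to build charts is not available for $\overX$ itself. Second, and more seriously, the step ``pull back the corner chart along $\pi$'' is not a valid operation here: $\psi$ restricts to a bijection on the dense open orbit $X$ but may have positive-dimensional fibers over boundary points, so on no neighbourhood meeting $X$ can it be a coordinate projection $\RR^m_{\geq 0}\times N_1\to\RR^m_{\geq 0}\times N_2$ (such a projection collapses the same directions at every point). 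Knowing that the target is a manifold with corners and that the map collapses something on the boundary tells you nothing about the source (think of a blow-down map); you still have to produce charts on $\overX$ directly, which is exactly the part your plan leaves at the level of ``roughly, the $k$ coordinates\dots''.

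The paper's proof resolves both issues by reversing the role of the domination map. It first observes that the closure $\overline{F}$ of the flat $\exp(\aaa)\cdot x_0$ \emph{in $\overX$} is a manifold with corners, reduces to points of $\overline{F}^{+}$ via the Cartan decomposition, and uses the Iwasawa decomposition to obtain a candidate chart $f\colon U^{-}_{\emptyset}\times\overline{F}\to\overX$ that is locally \emph{surjective} near such a point. The domination hypothesis is then used exactly once, to prove that $f$ is \emph{injective}: the corresponding map $U^{-}_{\emptyset}\times\psi(\overline{F})\to\overX^{max}$ is a local diffeomorphism, so $f$ cannot identify points. This sidesteps any need for an orbit description of $\overX$ or for a local normal form of $\psi$. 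If you want to salvage your plan, this injectivity-via-composition argument is the ingredient to import; without it, the non-folding of boundary strata that you correctly identify as the main obstacle remains unproved.
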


\begin{proof}[Proof of Lemma~\ref{lem:gal-sata-corners}]
  Let \(\psi: \overX \to \overX^{max}\) be the continuous \(G\)-equivariant map.
  It is easily seen that the closure \(\overline{F}\) of \(F\) the \(\exp(\aaa)\)-orbit of the base point \(x_0\) in \(G/K\subset \overX\) is a manifold with corners.
  Let \(\overline{F}^{+} \subset \overline{F}\) be the closure of \(\exp( \overline{\aaa}^+)\cdot x_0\).
  Let \(x\) be any point of \(\overX\).
  Using the Cartan decomposition of \(G\) one can assume that \(x\in \overline{F}^+\).
  Furthermore, using the Iwasawa decomposition, one deduces that the map \(f:U^{-}_{\emptyset} \times \overline{F} \to \overX\) is surjective in a neighborhood of \((e,x)\) into a neighborhood of \(x\).
  Since the corresponding map \(U^{-}_{\emptyset} \times \psi( \overline{F}) \to \overX^{max}\) is a local diffeomorphism we deduce that \(f\) is also injective.
  As \(U^{-}_{\emptyset} \times \overline{F}\) is a manifold with corners, the lemma follows.
\end{proof}


\providecommand{\bysame}{\leavevmode\hbox to3em{\hrulefill}\thinspace}

\end{document}